\begin{document}
\title{Trajectory Based Models, Arbitrage and Continuity}
\thanks{Partial Support from NSERC is gratefully acknowledged}
\author{A. Alvarez and S. E. Ferrando }

\institute{Department of Mathematics, Ryerson University\\
\email{alexander.alvarez@ryerson.ca}\\
\email{ferrando@ryerson.ca}\\
}

\date{Received: date / Accepted: date}
\maketitle

\begin{abstract}

The paper develops no arbitrage results for trajectory based models by connecting these results with the usual notion of arbitrage in stochastic models. The main condition imposed, in order to avoid arbitrage opportunities,
is a local continuity requirement on the final portfolio value considered as a functional on the trajectory space. The paper shows this to be a natural requirement
by proving that a large class of practical trading strategies, defined by means of trajectory based stopping times, give rise to locally continuous functionals.
The theory is applied, with some detail, to two specific trajectory models of practical interest. The connection between trajectory based models and stochastic models is used to derive no arbitrage results for stochastic models which are not semimartingales.
\end{abstract}

\textbf{Key words:} trajectory based arbitrage, trajectory based stopping times, local continuity, non-semimartingale models.

\section{Introduction} \label{sec:introduction}

There have been a few attempts to propose non-probabilistic approaches to financial  market models. As examples  we mention  \cite{bick} and \cite{riedel} among others. Perfect replication, when possible, is clearly a pathwise notion, and a simple view of arbitrage is that there is a portfolio that will no produce any loss for all possible paths and there exists at least one path that will provide a profit. This informal reasoning suggests that there are areas in financial mathematics that do not
necessarily require the use of probabilities in order to obtain some meaningful results.
In fact, as shown in \cite{AFO2011} and \cite{degano}, certain aspects of financial mathematics can be treated without recourse to probabilities. This last reference proves that, in a discrete time martingale setting, the risk neutral price of an attainable option equals the min-max trajectory based price. In turn, \cite{AFO2011} establishes trajectory based perfect hedging results, and associated pricing results,  in continuous time for attainable options.

To gain perspective on the matter, we refer to \cite{follmer2} for a discussion of the implications of probabilistic assumptions in finance and, in particular, the section on Knightian uncertainty. 

Results obtained without recourse to any probabilistic assumptions are more generally applicable and as such are of interest to a conscientious practitioner who tries to check the reality of the myriad of assumptions needed when using a particular result from financial mathematics. In such a way, our results are robust in the sense that they are independent of the particular probabilistic model that the stock may follow. In this respect, our work is conceptually close to some recent literature on robust modelling, we mention \cite{nutz} and \cite{vorbrink} as representatives of this literature.

In a stochastic setting, some aspects of modelling may naturally fall under Knightian uncertainty. An example is provided by  the notion of crash in portfolio optimization (\cite{desmettre}) where, the number, timing and size of a downwards stock change (a {\it crash}) is treated without probabilistic assumptions. 


In the present paper, instead of starting with a probability space $(\Omega, \mathcal{F}, (\mathcal{F}_t)_{t \geq 0}, P)$ and modelling the stock as a stochastic process $X$, we propose to concentrate
on a trajectory space $\mathcal{J} \subseteq \mathcal{D}[0,T]$ where the latter
is the set of functions $x:[0,T]\rightarrow \mathbb{R}$ which are right continuous with left limits (RCLL). The classical paradigm is to look for
a probability $P$ to model the unfolding market, the proposed approach focuses
on the set $\mathcal{J}$ which is conveniently treated as a metric space 
$(\mathcal{J}, d)$. A main question addressed in the present paper is: can we obtain general conditions that are 
practically relevant and  that guarantee trajectory markets to be arbitrage free?

Our general technique and framework are the ones 
introduced in  \cite{AFO2011} where several non-probabilistic (NP) no-arbitrage and hedging results were obtained. The main technique, encapsulated in Theorem \ref{main-arbitrage-local-cont} of the present paper, shows that one can go back and forth between stochastic
arbitrage in $(\Omega, \mathcal{F}, (\mathcal{F}_t)_{t \geq 0}, P)$ and NP arbitrage in 
$\mathcal{J}$. Theorem \ref{main-arbitrage-local-cont} is a simple result in the sense that the complexity of the problem tackled is hidden under the following two hypothesis required to apply the theorem:
\begin{enumerate}
\item Small Ball Property: the sets of  
paths in  $\{X(\omega): \omega \in \Omega\}$ which are
arbitrarily close (in the metric $d$) to arbitrary elements $x \in \mathcal{J}$ are non negligible under $P$.

\item Local $V$-Continuity of Portfolios: the terminal portfolio value is a locally continuous function (see Definition \ref{local-vContinuity}) as a function on 
$\mathcal{J}$ with respect to metric $d$.
\end{enumerate}

These two conditions, in a slightly different form, were already used in  \cite{valkeila-2} to prove no arbitrage results in models that are related to the Black-Scholes model but are not necessarily semimartingales. In that paper, the authors
work exclusively with the metric induced by the uniform norm. We realized that these two conditions together, but now within the framework provided
by a general metric structure in $\mathcal{J}$, can be useful for the study of 
more realistic models (see Sections 5 and 6). 

From a purely financial point of view the metric $d$ is not required in the sense that most concepts 
are defined independently of $d$ (for example, the concept of NP arbitrage). This means that the metric $d$ can be conveniently chosen over $\mathcal{J}$ so that the two conditions above are satisfied and Theorem \ref{main-arbitrage-local-cont} can be applied. This flexibility to choose an appropriate metric $d$, makes our approach quite flexible
and therefore powerful to deal with some models. An example of 
this is the novel metric $d_{QV}$
used in Section 6. 


Distinctive characteristics of trajectory based models include:
\begin{itemize}
\item Chart trajectories are directly observable.

\item It generalizes the modelling with stochastic process where
a trajectory set is implicit (the support of the process).

\item The generality of the framework allows to obtain results for non-semimartingale models.
\end{itemize}
Fundamental results by Delbaen and Schachermayer on non-semimartingale models
  imply the existence of a free lunch with vanishing risk in the class of simple portfolios
  (see, for example, \cite{delbaen}). Therefore, the justification and use of non-semimartingale
process in financial modelling is a delicate matter; what many researchers have done to deal with non-semimartingale
models is to restrict the class of allowed portfolios (see among others \cite{cheridito}, \cite{valkeila-2}, \cite{jarrow} and \cite{bender2}). 
Our Theorem \ref{main-arbitrage-local-cont}, item $ii)$, provides a tool to establish no-arbitrage results in non-semimartingale models. More precisely, by restricting to locally $V$-continuous portfolios, we conclude that there is no arbitrage in 
$(\Omega, \mathcal{F}, (\mathcal{F}_t)_{t \geq 0}, P)$ as long as the corresponding trajectory space $\mathcal{J}$ admits no NP arbitrage. For this conclusion, the fact that $X$ is a semimartingale or not is irrelevant. We provide
examples of such applications in Sections
\ref{implicationsForStochasticFrameworks} and \ref{modifiedHestonModel}. 
The class of portfolios covered by our results includes simple portfolios (see Theorem \ref{simplePortfoliosAreLC}) and also portfolios that are continuously rebalanced
between stopping times (see Theorems \ref{continuousRebalancingPortfolios} and 
\ref{loc-cont-portfolio-under-QV}); it then follows that local $V$-continuity under some
metric is a natural restriction on portfolios to avoid arbitrage in many non-semimartingale models. In other words, a consequence of our results is that for some models that are not semimartingales, and for which an arbitrage strategy exists, then it must necessarily be non locally V-continuous.  Proving the local $V$-continuity of the above mentioned classes of portfolios represents the bulk of our technical work.


As already mentioned, we build on the framework
of reference \cite{AFO2011} to which we will refer to avoid any unnecessary duplication. A main contribution of the present paper is to incorporate
stopping times in the formalism of \cite{AFO2011}. Towards this goal, we rely on a notion of trajectory based stopping times that is implicit in the usual stopping times (i.e. the formulation based on filtrations) and both notions are related
(\cite{shiryaev}); differences between the two concepts are highlighted in \cite{boshuizen} and \cite{hill}. The introduction of stopping times allows  to substantially enlarge the class of portfolios for which we can prove no-arbitrage results. Handling stopping times is a technically challenging problem as infinite sequences of stopping
times have to be proven to be jointly strong locally continuous (as per Definition \ref{joint-strong-LC}). These results are notably different and more difficult to obtain than those in \cite{AFO2011}, as the main examples treated in that paper satisfied the $V$-continuity property instead of local $V$-continuity. Another main contribution that sets us apart from \cite{AFO2011} is a first analysis of a variable volatility trajectory class treated in detail in Section \ref{variableVolatilityClass}
and related stochastic volatility models. Nothing in  \cite{AFO2011}, or in any other
reference that we are aware of, covers this type of non-semimartingale stochastic volatility models.

 A key problem faced by a trajectory based approach is to be able to integrate with respect to functions of unbounded variation given that portfolio values
are represented by such integrals. In this paper, we  present 
the general NP framework in Section 2 without mentioning any specific type of integral.  The main reason for this is that some results (for example Theorem \ref{simplePortfoliosAreLC}) can be proven without mentioning a specific integral
as it only involves simple portfolios. Later on, in particular in Sections 5 and 6, we specifically use Follmer's integrals, see \cite{follmer}. Given the recent surge of works (see for example \cite{cont2}, \cite{guasoni} and \cite{perkowsky}) that study pathwise integrals and some related properties, we can see a  potential for the application of our general framework using different integrals depending on the trajectory space.

In the context of the present paper a main advantage of the proposed point of view is the ability to obtain no-arbitrage results for non-semimartingale models. This is achieved through a clear methodology
using the small ball property and local $V$-continuity mentioned above. The proposed approach is flexible and general, it can be deployed with different metrics as well as  different integrals. At this point, 
another question arises naturally: is it possible to obtain these no-arbitrage results in non-semimartingale models without explicitly using the NP framework  and along the lines of the approach in  \cite{valkeila-2}, for example?  We do not have a definitive response to that question, but we do believe that without the NP formalism, it will not be very intuitive to introduce a {\it convenient} metric structure and make it a central element in the analysis of ``small balls" properties and local continuity. As evidence that this type of developments under a general metric are not natural from within the classical framework, we should mention that in many papers that make use of the 1 ``full support" property, 
for example \cite{guasoni2} and \cite{pak}, the metric structure induced by the uniform norm is used implicitly  and we have never found any hint suggesting the use of a different metric. Introducing the trajectory based framework makes the choice of a metric and subsequent analysis a lot more intuitive and natural.

The paper is organized as follows. Section \ref{nPFramework} introduces our main definitions, in particular we provide the definition of  NP-market and trajectory based stopping time and draw some basic
consequences from this last concept.
 Section \ref{locallyContinuousPortfolios} introduces a notion of local continuity for a general metric space; under general assumptions, simple portfolios defined through a sequence of trajectory based stopping times are shown to define portfolios with an associated locally continuous value functional. Section \ref{arbitrage}, following \cite{AFO2011}, provides a result linking the usual (probabilistic) notion of arbitrage with
NP-arbitrage. This connection is achieved by assumptions of local continuity and  small balls allowing to transfer results,  back and forth, between NP-market models and  stochastic market models.
Section \ref{nPJumpFiffusionClass} constructs a NP-jump-diffusion arbitrage free market and shows how the result can be used to prove that several non-semimartingale market models are arbitrage free. Section \ref{variableVolatilityClass} constructs a trajectory dependent volatility arbitrage free market and also draws implications to related non-semimartingale market models.
Finally, Section \ref{overview} provides an overall perspective on the trajectory based approach and concludes.
The Appendix contains statements and proofs of technical results used in the paper.

\section{Non Probabilistic Framework} \label{nPFramework}

\subsection{Non Probabilistic Market}

Most of the definitions and ideas in this brief section were already
introduced in \cite{AFO2011}, we include them here to make the paper as self-contained as possible.

\noindent
Let $x$ be a real valued function on $[0, T]$ which is right continuous and has left limits (RCLL for short), the space of such functions will be denoted by $\mathcal{D}[0,T]$. We assume the existence of a non risky asset evolving with constant interest
rate $r$ which, for simplicity, will be set to  $r=0$.
The risky asset is modeled  by trajectories of functions $x$ belonging to certain class $\mathcal{J}(x_0)$, which we will write simply as $\mathcal{J}$,
where $x(0) = x_0$ for all $x \in \mathcal{J}$.

We also assume that, for every trajectory $x \in \mathcal{J}(x_0) \subset \mathcal{D}[0,T]$, the integrals
\begin{equation}\label{integrability-assumption}
\int_0^t y(s,x) dx_s
\end{equation}
 are well defined for all $t \in [0,T]$ under appropriate conditions on the integrand $y$. The sense in which these integrals exist is not specified yet but a general property will be assumed at this point. In the case that $y(\cdot, x)$ is piecewise constant, namely for any $t \in [0, T]$:
\begin{equation}  \nonumber
y(t,x)=1_{[0,t_{1}]}(t)~c_0(0,x)+\sum_{i=1}^{n(x)-1} 1_{(t_i,t_{i+1}]}(t)~c_i(t_i,x),
\end{equation}
where  $0=t_0< t_1(x)< ...< t_{n(x)}(x)=T$ is a finite, $x$-dependent partition, we will require that for all $t \in [0, T]$
\begin{equation} \label{minimumIntegralRequirement}
\int_0^t y(s,x) dx_s = \sum_{i=0}^{k(x)- 2} c_i(t_i,x) \left[ x_{t_{i+1}}-x_{t_i}\right] + 
c_{k-1}(t_{k-1},x) \left[ x_{t}-x_{t_{k-1}}\right],
\end{equation}
where $k(x)$ is the smallest integer such that $ t \leq t_{k(x)}$. 

There are several notions of integrals suitable for pathwise integration (\cite{follmer}, \cite{guasoni}).
In principle, different trajectory classes $\mathcal{J}$ may require each a specific notion of integration
and we will indicate the use of each such integrals whenever appropriate.

\vspace{.1in}
A NP-portfolio $\Phi$ is a function $\Phi$$:$ $[0, T] \times
\mathcal{J}(x_0) \rightarrow \mathbb{R}^2$, $\Phi =(\psi, \phi)$,
satisfying $\Phi(0,x)=\Phi(0,x')$ for all $x, x' \in \mathcal{J}(x_0)$. This common value
will be denoted $\Phi(0, x_0)$.
We will also consider the associated projections $\Phi_x$$:$$ [0,T]
\rightarrow \mathbb{R}^2$ and $\Phi_t$$:$$ \mathcal{J}(x_0) \rightarrow
\mathbb{R}^2$, for fixed $x$ and $t$ respectively.

The value of a NP-portfolio $\Phi$ is the function $V_{\Phi}$$:$$
[0,T]\times \mathcal{J}(x_0) \rightarrow \mathbb{R}$ given by:
\begin{equation}  \nonumber 
V_{\Phi}(t,x) \equiv \psi(t, x)+ \phi(t,x) ~x(t).
\end{equation}

\begin{definition} \label{def:NP-conditions}
Consider a class $\mathcal{J}(x_0)$ of trajectories starting at $x_0$ and consider a NP-portfolio $\Phi$:

\begin{itemize}
\item [i)] $\Phi$ is said to be NP-predictable
if $\Phi_t(x) = \Phi_t(x')$ for all $x, x' \in \mathcal{J}(x_0)$
such that $x(s) = x'(s)$ for all $0 \leq s < t$ and $\Phi_x(\cdot)$
is left continuous and has right limits (LCRL for short) for all $x \in \mathcal{J}(x_0)$.
\item [ii)] $\Phi$ is said to be NP-self-financing if the integrals $\int_0^t \psi(s,x)~ ds$
and $\int_0^t \phi(s,x) d x_s$ exist for all $x \in
\mathcal{J}(x_0)$ in the senses of Stieltjes and expression (\ref{integrability-assumption}) respectively
and
\begin{equation} \label{selfFinancing0}
V_{\Phi}(t,x)=V_0 + \int_0^t  \psi(s, x) ~r~ds + \int_0^t \phi(s, x) d x_s, \; \forall x \in \mathcal{J}(x_0),
\end{equation}
where $V_0 = V_{\Phi}(0,x)=\psi(0,x) + \phi(0,x) ~x(0)$ for any $x \in \mathcal{J}(x_0)$.
\end{itemize}
\end{definition}

\begin{remark} \label{selfFinancingBankInvestment}
Consider $r=0$ and function $\phi(\cdot, \cdot)$ given; define $\Phi = (\psi , \phi)$ where    $\psi(t, x) \equiv  V_{\Phi}(t^-, x) - x(t^-) \phi(t,x)$
and $V_{\Phi}(t^-, x)$  is given by (\ref{selfFinancing0}) with $r=0$.
For all the family of functions $\phi$ considered in this paper, and under the working assumption $r=0$, these portfolios $\Phi$ will satisfy all the properties listed in Definition \ref{def:NP-conditions}.
We will not prove this fact in each instance but refer to
Theorem \ref{simplePortfoliosAreLC} as a typical example.
\end{remark}

\begin{definition}\label{def:NP-market}
A NP-market model $\mathcal{M}$ is a pair $\mathcal{M}= (\mathcal{J},\mathcal{A})$ where $\mathcal{J}$ represents a class of
possible trajectories for a risky asset and
 $\mathcal{A}$ is a class of NP-portfolios.
\end{definition}

For some of our results, we will need to require the following stronger hypothesis of admissibility.
\begin{definition}  \label{npAdmissiblePortfolio}
A NP-portfolio $\Phi$ is said to be NP-admissible if  $V_{\Phi}(t,x) \geq - A $, for a constant $A = A(\Phi) \geq 0$, for all $t \in [0, T]$ and all $x \in \mathcal{J}(x_0)$.
\end{definition}

The following definition provides the notion of arbitrage in a non probabilistic framework.

\begin{definition}\label{nPArbitrage}
A NP-portfolio $\Phi$ defined on a trajectory space $\mathcal{J}$ is a NP-arbitrage if:
\begin{itemize}
\item $V_0 =0 $ and   $V_{\Phi}(T,x) \geq 0$, $\forall x \in \mathcal{J}$.
\item $\exists x^{\ast} \in \mathcal{J}$ satisfying $V_{\Phi}(T, x^{\ast}) > V_{\Phi}(0, x^{\ast}) $.
\end{itemize}
We will say that the NP-market $\mathcal{M}= (\mathcal{J},\mathcal{A})$
is arbitrage free if $\Phi$ is not a NP-arbitrage, for each  $\Phi \in \mathcal{A}$.
\end{definition}

\subsection{Trajectory Based Stopping Times}

The usual stopping times depend on a given filtration but a closely related notion can be defined in a trajectory based sense. 
\begin{definition} \label{definitionOfNPStoppingTimes}
Let $\mathcal{J}$ be a class of trajectories, a functional $\tau: \mathcal{J} \rightarrow [0,T]$ is called a NP-stopping time on
$\mathcal{J}$ if for every pair of trajectories $x,y \in \mathcal{J}$, with $x(s) = y(s)$ for all $s \in [0, \tau(x)]$, it follows that
$\tau(y) = \tau(x)$.
\end{definition}
\begin{remark} \label{spaceDiscretizationOfStoppingTimes}
Unless indicated otherwise, when proving a certain functional to be a trajectory based stopping time, its domain $\mathcal{J}$ will be taken to be the whole set of RCLL functions ($\mathcal{D}([0,T])$.) This
approach provides more general results because,
once the result is obtained on $\mathcal{D}([0,T])$, it applies to any arbitrary subset.
\end{remark}

\vspace{.1in}
\noindent
It will take a separate study to derive systematically the consequences following from Definition \ref{definitionOfNPStoppingTimes}, we content ourselves with
providing some basic results, some of them will be used in the remaining of the paper. We also refer to \cite{boshuizen} and \cite{hill} for closely related developments.

It is well known that the usual stopping times (i.e. the a filtration based formulation) can be equivalently recast in  terms of trajectories.
See Theorem 7, Chapter 1,  in \cite{shiryaev} (this is sometimes referred to as Galmarino's test). As an illustration, we formulate here a particular version of this type of result.
Define, for each $s\in [0,T]$, the functions $X_s:\mathcal{J} \rightarrow \mathbb{R}$
by $X_s(x) = x(s)$ and the associated canonical filtration on a given trajectory space ${\mathcal{J}}$:
$$
\mathcal{F}_t^{\mathcal{J}}= \sigma(X_s: 0 \leq s \leq t),
$$
where we have considered $\mathbb{R}$ with the Borel sigma algebra.
We then have the following result (see Problem 2.2 from \cite{karatzas}):
\begin{proposition} \label{connectionToRandomTimes}
A stopping time $\tau$ relative to the filtration $\{\mathcal{F}_t^{\mathcal{J}}\}$ is also a trajectory based stopping time.
\end{proposition}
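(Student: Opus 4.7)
The plan is to invoke a standard ``measurability depends only on the past'' lemma and then apply it twice to squeeze $\tau(y)$ between $\tau(x)$ and itself.

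First I would establish (or recall) the following auxiliary fact: for every $t \in [0,T]$ and every $A \in \mathcal{F}_t^{\mathcal{J}}$, if $x, y \in \mathcal{J}$ satisfy $x(s)=y(s)$ for all $s \in [0,t]$, then $\mathbf{1}_A(x) = \mathbf{1}_A(y)$. This is the content of Galmarino's test and is the only nontrivial ingredient. The proof is a routine monotone class / Dynkin $\pi$--$\lambda$ argument: the collection of sets $A$ enjoying this invariance property forms a $\sigma$-algebra that contains every cylinder $\{X_{s} \in B\}$ for $s \leq t$ and $B \in \mathcal{B}(\mathbb{R})$, hence contains $\sigma(X_s : 0 \leq s \leq t) = \mathcal{F}_t^{\mathcal{J}}$.

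Next, fix $x, y \in \mathcal{J}$ with $x(s)=y(s)$ for all $s \in [0, \tau(x)]$, and set $t := \tau(x)$. Since $\tau$ is a stopping time relative to $\{\mathcal{F}_t^{\mathcal{J}}\}$, the set $\{\tau \leq t\}$ belongs to $\mathcal{F}_t^{\mathcal{J}}$. Clearly $x \in \{\tau \leq t\}$, and because $x$ and $y$ agree on $[0,t]$, the auxiliary lemma yields $y \in \{\tau \leq t\}$, i.e.\ $\tau(y) \leq \tau(x)$.

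For the reverse inequality I would argue by contradiction: suppose $\tau(y) < \tau(x)$ and set $s := \tau(y)$. Then $\{\tau \leq s\} \in \mathcal{F}_s^{\mathcal{J}}$ and $y \in \{\tau \leq s\}$. Since $s < \tau(x)$, the trajectories $x$ and $y$ still agree on the whole interval $[0,s]$, so the auxiliary lemma forces $x \in \{\tau \leq s\}$, giving $\tau(x) \leq s = \tau(y) < \tau(x)$, a contradiction. Hence $\tau(y) = \tau(x)$, which is precisely the condition in Definition \ref{definitionOfNPStoppingTimes}.

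The main (and essentially only) obstacle is the auxiliary lemma; once it is in hand, the two-sided comparison is immediate. I would expect the author either to cite Galmarino's test directly (as hinted by the reference to \cite{shiryaev} and Problem 2.2 in \cite{karatzas}) or to sketch the monotone-class argument. Everything else is bookkeeping with the definitions of $\tau(x)$ and $\tau(y)$.
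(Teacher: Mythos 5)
Your proof is correct and is exactly the standard argument behind the result the paper cites without proof (it simply refers to Galmarino's test and Problem 2.2 of \cite{karatzas}): the auxiliary invariance lemma for sets in $\mathcal{F}_t^{\mathcal{J}}$ via a $\pi$--$\lambda$ argument, followed by the two-sided comparison using $\{\tau\leq \tau(x)\}$ and $\{\tau\leq \tau(y)\}$. Nothing to add.
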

Proposition \ref{connectionToRandomTimes} shows that  stopping times with respect to the canonical filtration associated to the process $X=\{X_s\}_{0 \leq s \leq T}$
will also be trajectory based stopping times.

\vspace{.2in}
Proposition \ref{stopTimesInCanonicalFiltrationAreNPStpTimes} provides several examples of stopping times.
The notation below makes use of the convention $\inf_{t \in [0,T]} \varnothing \equiv T$.
We use the following notation throughout the paper: $f(t^{-})$ represents the left limit at $t$ for the function $f$.

\begin{proposition} \label{stopTimesInCanonicalFiltrationAreNPStpTimes}
The following are trajectory based stopping times on $\mathcal{D}([0,T])$:
\begin{enumerate}
\item  $\tau(x)=c$ if $c \in [0,T]$. \label{constant}
\item  $\tau(x)= (\tau_1(x) + \tau_2(x)) \wedge T$ \label{sum}
where $\tau_1, \tau_2$ are trajectory based stopping times on $\mathcal{D}([0,T])$.
\item Let $A \subset \mathbb{R}$ be a closed set. \label{hittingTime}
For all $x \in \mathcal{J}$, define $\tau(x)= \inf\{t \in [0,T]: x_t \in A\}$. Then
$\tau$ is a trajectory based stopping time.
\item  $\tau(x)=\inf_t \left\{ x(t) \geq a \right\}$. \label{crossingLevel}
\item Consider $\delta > 0$, the \label{deltaJumps}
following functional is a trajectory based stopping time:
$$
\tau_{\delta}(x) = \inf_{t \in [0, T]} \{|x(t) - x(t^-)| > \delta\} .
$$

\item The minimum of a finite collection of trajectory based stopping times is also a  trajectory based stopping time. \label{minimum}
\item Let $\tau_t$ be a collection of  trajectory based stopping times indexed by $t \in I$, \label{supremum}
where $I$ is an arbitrary index set. Then $\tau(x) = \sup_{t \in I}~\tau_t(x)$ is a trajectory based stopping time.
\end{enumerate}
\end{proposition}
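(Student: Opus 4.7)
The plan is to verify, for each of the seven items, the defining property of a NP-stopping time: whenever $x,y \in \mathcal{D}([0,T])$ satisfy $x(s)=y(s)$ for all $s\in[0,\tau(x)]$, one has $\tau(y)=\tau(x)$. Items 1 and 7 are essentially formal. For item 1, $\tau\equiv c$ so the conclusion is immediate. For item 7, for every $t\in I$ one has $\tau_t(x)\le\tau(x)$, so $x$ and $y$ agree on $[0,\tau_t(x)]$ and the stopping-time property of $\tau_t$ gives $\tau_t(y)=\tau_t(x)$; taking suprema yields $\tau(y)=\tau(x)$. Item 2 reduces similarly once one notes $\tau_i(x)\le(\tau_1(x)+\tau_2(x))\wedge T=\tau(x)$ for $i=1,2$, so each $\tau_i(y)=\tau_i(x)$ and the sum is preserved after truncation at $T$.

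Item 6 requires a slightly more careful step. Writing $\tau=\min_i\tau_i$, I would pick $i_0$ with $\tau(x)=\tau_{i_0}(x)$; applying the stopping-time property of $\tau_{i_0}$ on $[0,\tau(x)]$ gives $\tau_{i_0}(y)=\tau(x)$, so $\tau(y)\le\tau(x)$. For the opposite inequality I argue by contradiction: if some $\tau_j(y)<\tau(x)$, then $x$ and $y$ agree on $[0,\tau_j(y)]\subset[0,\tau(x)]$, so applying the stopping-time property of $\tau_j$ with the roles of $x$ and $y$ exchanged yields $\tau_j(x)=\tau_j(y)<\tau(x)\le\tau_j(x)$, a contradiction.

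The analytic content lies in items 3--5 and is driven by the RCLL structure. For item 3 (hitting time of a closed set $A$), if $\tau(x)<T$, the definition of infimum produces $t_n\downarrow\tau(x)$ with $x_{t_n}\in A$; right-continuity and closedness of $A$ force $x_{\tau(x)}\in A$, hence $y_{\tau(x)}=x_{\tau(x)}\in A$ and $\tau(y)\le\tau(x)$, while $y_t=x_t\notin A$ for $t<\tau(x)$ gives $\tau(y)\ge\tau(x)$; when $\tau(x)=T$ agreement on $[0,T]$ makes $y=x$. Item 4 is the specialization $A=[a,\infty)$.

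The main obstacle is item 5, because the strict inequality $|x(t)-x(t^-)|>\delta$ in the definition of $\tau_\delta$ does not make the candidate set closed from the right. My plan is first to establish the standard RCLL fact that $S_\delta(x)=\{t\in[0,T]:|x(t)-x(t^-)|>\delta\}$ is finite: an accumulation point $t^*$ of $S_\delta(x)$ would, through a one-sided sequence $t_n\to t^*$, force $x_{t_n}$ and $x_{t_n^-}$ to converge to the same one-sided limit at $t^*$, contradicting $|x_{t_n}-x_{t_n^-}|>\delta$. Hence the infimum defining $\tau_\delta(x)$ is attained (or equals $T$ by convention if $S_\delta(x)=\varnothing$). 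If $\tau_\delta(x)=t^*<T$, then agreement of $x$ and $y$ on $[0,t^*]$ gives both $y_{t^*}=x_{t^*}$ and $y_{t^{*-}}=x_{t^{*-}}$ — the left limit depends only on values strictly before $t^*$ — so the jump of size $>\delta$ at $t^*$ is preserved in $y$ and no earlier time can belong to $S_\delta(y)$; thus $\tau_\delta(y)=t^*$. The boundary case $\tau_\delta(x)=T$ is immediate from $y=x$ on $[0,T]$.
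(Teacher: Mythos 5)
Your proposal is correct and follows essentially the same route as the paper: items 1, 2, 6, 7 by direct manipulation of the defining property, item 3 via right-continuity plus closedness of $A$ to show the infimum is attained, item 4 as the specialization $A=[a,\infty)$, and item 5 by showing the infimum in $\tau_\delta$ is attained using the RCLL structure. The only cosmetic difference is in item 5, where you first prove global finiteness of the set of jumps of size $>\delta$ while the paper argues attainment locally at the infimum by contradiction; both rest on the same fact that one-sided limits force jump sizes to vanish along a monotone sequence.
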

\begin{proof}
Items \ref{constant} and \ref{sum} have immediate proofs.
To prove \ref{hittingTime}, consider $x, y \in \mathcal{D}([0,T])$ such that $x(s) =y (s)$ for all $s \in [0, \tau(x)]$. In the case that   $x(\tau(x)) \in A$ we have that
 $y(\tau(x)) \in A $ and $y(s) \notin A$ for all $s \in [0, \tau(x))$. Hence $\tau(y) = \tau(x)$. The case $x(\tau(x)) \notin A$ is not possible as we argue next; for each $\epsilon_n >0$ there exists
$t_{n}$ satisfying $\tau(x) \leq t_{n} < \tau(x) + \epsilon_n $ with $x(t_n)  \in A$. Then, this implies, by right continuity and the fact that $A$ is closed, that
$\lim_{t_n \searrow \tau(x)} x(t_n) = x(\tau(x)) \in A$.

\noindent
The result  \ref{crossingLevel} follows from \ref{hittingTime} by taking $A = [a, \infty)$.

\noindent
To prove \ref{deltaJumps}  assume
 $x,y$ to be RCLL functions satisfying $x(s) = y(s) $ for all $s \in [0, \tau_{\delta}(x)]$. We analyze two cases: Case $i)$ when $|x(\tau_{\delta}(x)) - x(\tau_{\delta}(x)^-)| > \delta$, it follows that
$|y(\tau_{\delta}(x))- y(\tau_{\delta}(x)^-)| > \delta$ as well. Moreover, $|y(t) - y(t^-)| > \delta$ for $t \in [0, \tau(x))$ is impossible as it will contradict the definition of $\tau_{\delta}(x)$
as an infimum. It follows then, that for this case, $\tau_{\delta}(x)= \tau_{\delta}(y)$. We will now argue that case $ii)$, namely, $|x(\tau_{\delta}(x)) - x(\tau_{\delta}(x)^-)| \leq  \delta$ does not occur, this will conclude
the proof. Consider $\epsilon_n  \searrow 0$, then there exists $ \tau_{\delta}(x) < t_n \leq \tau_{\delta}(x)+ \epsilon_n$ such that $|(x(t_n) - x(t_n^-)| > \delta$. These last statements contradict the fact that  
$\lim_{n \rightarrow \infty} (x(t_n) - x(t_n^-))=0$  which follows from the fact that $x \in \mathcal{D}([0,T])$,
$ \tau_{\delta}(x) \in [0, T)$ and $t_n \searrow \tau_{\delta}(x)$.

\noindent
Items \ref{minimum} and \ref{supremum}, have direct proofs.
\qed
\end{proof}

\begin{corollary}
Consider $\mathcal{J}$ to be a fixed subset of  $\mathcal{D}([0,T])$ such that  each $x \in \mathcal{J}$ has a finite number of jumps.
Then,
$$
\tau(x) = \inf_{t \in [0, T]} \{(x(t) - x(t^-)) \neq 0\}= \inf_{\delta >0} \inf_{t \in [0, T]} \{ |x(t) - x(t^-)| > \delta\},
$$
is a trajectory based  stopping time on $\mathcal{J}$.
\end{corollary}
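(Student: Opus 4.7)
The plan is to prove the corollary in two steps: first establish the equality of the two displayed formulas for $\tau$, and then verify that $\tau$ satisfies Definition \ref{definitionOfNPStoppingTimes} either by a direct argument or by reduction to item \ref{deltaJumps} of Proposition \ref{stopTimesInCanonicalFiltrationAreNPStpTimes}.

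For the equality, fix $x \in \mathcal{J}$ and observe that the map $\delta \mapsto \tau_{\delta}(x)$ is nondecreasing, so $\inf_{\delta>0}\tau_{\delta}(x) = \lim_{\delta \searrow 0}\tau_{\delta}(x)$. If $x$ has no jumps in $[0,T]$, then $\tau_\delta(x) = T$ for every $\delta > 0$ (by the convention $\inf \varnothing = T$), and the first expression also returns $T$. If $x$ has at least one jump, then the finite jumps hypothesis makes the set of jump sizes of $x$ a nonempty finite subset of $(0,\infty)$, whose minimum $\delta_0(x)$ is strictly positive; for any $\delta \in (0,\delta_0(x))$, $\tau_\delta(x)$ equals the time of the first jump of $x$, which is precisely $\inf_t\{x(t)-x(t^-) \neq 0\}$. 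This matches both expressions.

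For the stopping time property, take $x,y \in \mathcal{J}$ with $x(s) = y(s)$ for all $s \in [0,\tau(x)]$. From the agreement, $y(t^-) = x(t^-)$ and $y(t) = x(t)$ for every $t \in (0,\tau(x)]$, so the jumps of $y$ in this interval coincide with those of $x$. If $\tau(x) < T$, then $\tau(x)$ is the first jump time of $x$, hence also of $y$, giving $\tau(y) = \tau(x)$; if $\tau(x) = T$, then $x$ (and therefore $y$) has no jumps, so $\tau(y) = T$ as well. Alternatively, one can bypass this case analysis by choosing any $\delta \in (0,\delta_0(x))$ (when $x$ has jumps), noting that $\tau_\delta(x) = \tau(x)$ and invoking item \ref{deltaJumps} of Proposition \ref{stopTimesInCanonicalFiltrationAreNPStpTimes} to get $\tau_\delta(y) = \tau_\delta(x)$, from which $\tau(y) \leq \tau(x)$ follows, while the reverse inequality comes from the absence of earlier jumps in $y$.

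There is no serious obstacle; the only point where the finite jumps hypothesis is essential is in guaranteeing the strictly positive lower bound $\delta_0(x)$ on jump sizes, which turns the infimum over $\delta$ into an attained value and rules out pathological trajectories whose jumps accumulate in size toward zero, which would otherwise prevent the equality of the two expressions from holding and would potentially allow $y$ to develop a small earlier jump not visible through any single $\tau_\delta$.
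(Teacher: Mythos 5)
Your proof is correct and follows essentially the same route as the paper: both arguments use the finite-jumps hypothesis to extract a strictly positive minimal jump size $\delta_0(x)$, so that $\tau(x)=\tau_\beta(x)$ for any $\beta\in(0,\delta_0(x))$, and then transfer the stopping-time property from $\tau_\beta$ (item \ref{deltaJumps} of Proposition \ref{stopTimesInCanonicalFiltrationAreNPStpTimes}) back to $\tau$, with the reverse inequality coming from the absence of earlier jumps in $y$. The only cosmetic slip is your claim that $\tau(x)=T$ forces $x$ to have no jumps (it could jump exactly at $T$), but then the agreement interval is all of $[0,T]$, so $y=x$ and the conclusion holds trivially.
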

\begin{proof}
Clearly, from the hypothesis on finite number of jumps,  $\inf_{t \in [0, T]} \{(x(t) - x(t^-)) \neq 0\} \geq \inf_{\delta >0} \inf_{t \in [0, T]} |x(t) - x(t^-)| > \delta\}$.
The reverse inequality follows by noticing that $\inf_{t \in [0, T]} \{|x(t) - x(t^-)| > 0\} \leq  \inf_{t \in [0, T]} |x(t) - x(t^-)| > \delta\}$ for all $\delta >0$.
So, $\tau(x) = \inf_{\delta >0} \tau_{\delta}(x)$ and each $\tau_{\delta}$ is a trajectory based stopping time according to  Proposition \ref{stopTimesInCanonicalFiltrationAreNPStpTimes}, item \ref{deltaJumps}.
Again, from the hypothesis on finite number of jumps, for a fixed $x$, there exists $\beta= \delta(x)$ such that $\tau(x) = \tau_{\beta}(x)$, therefore, for such fixed $x$, consider $y$ such that
$x(s) =y(s)$ for all $s \in [0, \tau(x)]$. It follows that $\tau_{\beta}(x)= \tau_{\beta}(y)$ and so $\tau(x)= \tau(y)$.
\qed
\end{proof}




\section{Locally Continuous Portfolios}  \label{locallyContinuousPortfolios}

In \cite{valkeila-2} the concept of local continuity is introduced as follows.

\begin{definition}
Let $\mathcal{X}$ and  $\mathcal{Y}$ be metric spaces. A function $f: \mathcal{X} \rightarrow \mathcal{Y}$
is locally continuous if for all $x \in \mathcal{X}$ there exists an open $U_x \subset \mathcal{X}$ such that
$x \in \overline{U}_x$ and $f(x_n) \to f(x)$ whenever $U_x \ni x_n \to x$.
\end{definition}
The notion of local continuity is equivalent, at least in the setting of metric spaces, to quasicontinuity, a notion developed in the literature (see \cite{kempisty}).
Local continuity is a natural topological property for NP-stopping times, this is in contrast to the stronger property of continuity.
As an example we mention that
the stopping time given in item \ref{crossingLevel} of Proposition \ref{stopTimesInCanonicalFiltrationAreNPStpTimes} is not continuous with respect to the uniform metric but it is easily seen to be locally continuous relative to that metric.

\vspace{.1in}
\noindent
In our framework, we will not only need  locally continuous NP-stopping times but we will also require they satisfy the following stronger continuity property.

\begin{definition} \label{strongLocallyContinuous}
Let $\mathcal{J}$ be a class of trajectories provided with metric $d$. A stopping time $\tau$ on $\mathcal{J}$
is said to be strong locally continuous if for all $x\in \mathcal{J}$ there exists an open set $U_x \subset \mathcal{J}$
such that $x \in \overline{U}_x$ and whenever $U_x \ni x_n \to x$:
\begin{enumerate}
\item $\tau(x_n) \to \tau(x)$ (\mbox{local continuity)}. \label{condition1}
\item $x_n(\tau(x_n)) \to x(\tau(x))$. \label{condition2}
\end{enumerate}
\end{definition}

The next proposition shows that condition \ref{condition2}, in Definition \ref{strongLocallyContinuous},
follows from local continuity in the case of the uniform metric. The proof is straightforward and hence omitted.

\begin{proposition}
Let $\mathcal{J}$ be a class of continuous trajectories provided with the topology induced by the uniform norm. If $\tau$
is a locally continuous stopping time on $\mathcal{J}$ then $\tau$ is strong locally continuous on $\mathcal{J}$.
\end{proposition}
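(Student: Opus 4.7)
The plan is to observe that the two ingredients needed, namely uniform convergence of the trajectories and continuity of the limit trajectory, together with the local continuity of $\tau$, combine cleanly via a triangle inequality. So I would take the same open set $U_x$ furnished by the hypothesis of local continuity and verify that condition \ref{condition2} of Definition \ref{strongLocallyContinuous} holds automatically on that same neighborhood.

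Concretely, fix $x \in \mathcal{J}$ and let $U_x$ be an open set with $x \in \overline{U}_x$ such that $\tau(x_n) \to \tau(x)$ whenever $U_x \ni x_n \to x$ in the uniform metric. Given such a sequence, I would estimate
\begin{equation*}
|x_n(\tau(x_n)) - x(\tau(x))| \leq |x_n(\tau(x_n)) - x(\tau(x_n))| + |x(\tau(x_n)) - x(\tau(x))|.
\end{equation*}
The first term is bounded by $\sup_{t \in [0,T]} |x_n(t) - x(t)|$, which tends to $0$ by uniform convergence $x_n \to x$. For the second term, since $\tau(x_n) \to \tau(x)$ by local continuity of $\tau$ and $x$ is a continuous function on $[0,T]$, we obtain $x(\tau(x_n)) \to x(\tau(x))$ by the continuity of $x$ at the point $\tau(x)$.

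I do not expect any real obstacle here; the proposition is essentially a straightforward application of the triangle inequality, using that the uniform metric controls pointwise values at an arbitrary (possibly varying) evaluation time, and that continuity of the limit trajectory takes care of the drift in the evaluation time. This is precisely why the authors state that the proof is straightforward and omitted; my writeup would occupy only a few lines and emphasize that both the continuity of the trajectories in $\mathcal{J}$ and the choice of the uniform metric are essential (for RCLL trajectories or a weaker metric, condition \ref{condition2} would require a separate argument).
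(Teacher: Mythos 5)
Your argument is correct and is precisely the standard triangle-inequality decomposition the authors have in mind when they declare the proof straightforward and omit it: the uniform metric controls $|x_n(\tau(x_n)) - x(\tau(x_n))|$ and continuity of $x$ together with $\tau(x_n) \to \tau(x)$ handles $|x(\tau(x_n)) - x(\tau(x))|$. There is nothing to add; your remark that both hypotheses (continuity of the trajectories and the uniform metric) are genuinely needed is also accurate.
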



\begin{definition} \label{local-vContinuity}
Let $(\mathcal{J},\mathcal{A})$ be a NP-market. A NP-portfolio
$\Phi \in \mathcal{A}$ is said to be locally V-continuous with respect to $d$
if the functional $V_{\Phi}(T,\cdot)$$:$$ \mathcal{J} \rightarrow \mathbb{R}$
is locally continuous with respect to the topology induced on $\mathcal{J}$ by
the distance $d$.
\end{definition}

In contrast to the use of $V$-continuity
in \cite{AFO2011}
we will consider locally $V$-continuous portfolios. The reason for this is that, in general, typical NP-stopping times  
will generate NP-portfolios that are only locally V-continuous.
The following proposition provides an example of a simple portfolio, defined by a constant NP-stopping time, that
is locally V-continuous but not V-continuous. This example was considered in Propositions 4 and 5 from \cite{AFO2011}.
We briefly recall the definition of the required trajectory class $\mathcal{J}^{a, \mu}(x_0)$
from \cite{AFO2011}.

Denote by $\mathcal{N}([0, T])$ the collection of all
functions $n(t)$ such that there exists a non negative integer $m$
and positive numbers $0 < s_1 < \ldots < s_m < T$ such that
$n(t)=\sum_{i \geq 1} 1_{[0,t]}(s_i)$. The function $n(t)$ is
considered as identically zero on $[0, T]$ whenever $m=0$.
\begin{itemize}
\item Given constants $\mu, a \in \mathbb{R}$, $\mu ~a <0$ and $x_0 > 0$,
let $ \mathcal{J}^{a,\mu}(x_0)$ to be the class of all functions
$x$ for which exists $n(t) \in \mathcal{N}([0, T])$ such that:
\begin{equation} \label{eq:poisson}
x(t)=x_{0}e^{\mu t}(1+a)^{n(t)}.
\end{equation}
\end{itemize}
The function $n(t)$ counts the number of jumps present in the path
$x$ until, and including, time $t$.

We will consider that the class of trajectories $\mathcal{J}^{a,\mu}(x_0)$
is endowed with the Skorohod's distance $d_S$. The Skorohod's distance 
between functions in $D[0,T]$ is defined as follows. Let $\Lambda$ denote the class
of strictly increasing, continuous mappings of $[0,T]$ onto itself,
then 
$\displaystyle{d_S(x,y)=\inf_{\lambda \in \Lambda} \max \{ \|\lambda-I \|, \| x-y\circ \lambda\|\}}$
where $I:[0,T] \to [0,T]$ is the identity function. More on the Skorohod's metric
can be found in \cite{billingsley}. We have the following result:

\begin{proposition}
Consider $T=1$ and the NP-portfolio with initial value
$x_0$ defined by:
\begin{itemize}
\item $\phi(t,x)=1$, $\psi(t,x)=0$, for all
$x \in \mathcal{J}^{a, \mu}(x_0)$ if $0 \leq t \leq 1/2.$
\item $\phi(t,x)=0$, $\psi(t,x)=x_{\frac{1}{2}}$ for all $x \in \mathcal{J}^{a, \mu}(x_0)$ if $1/2<t \leq 1$.
\end{itemize}
Then, $\Phi = (\psi, \phi)$ is a NP-admissible portfolio that is locally V-continuous but not continuous with respect to the Skorohod's metric on 
$\mathcal{J}^{a, \mu}(x_0)$.
\end{proposition}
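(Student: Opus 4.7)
The portfolio collapses to the identity $V_{\Phi}(T,x) = \psi(T,x) + \phi(T,x)\,x(T) = x(1/2)$, so the whole question reduces to whether the evaluation functional $E:\mathcal{J}^{a,\mu}(x_0) \to \mathbb{R}$, $x \mapsto x(1/2)$, is (locally) continuous in the Skorohod metric $d_S$. NP-admissibility is immediate since every trajectory $x(t) = x_0 e^{\mu t}(1+a)^{n(t)}$ is positive (the standing sign conventions force $1+a > 0$), so $V_{\Phi} \geq 0$ and we may take $A = 0$. NP-predictability and NP-self-financing with $r=0$ are a direct check in the spirit of Remark \ref{selfFinancingBankInvestment}, the bank position for $t > 1/2$ being exactly the proceeds $x(1/2)$ of selling the unit share held over $[0,1/2]$.

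To see that $E$ is not continuous, I would pick any $x \in \mathcal{J}^{a,\mu}(x_0)$ with a jump at $s_j = 1/2$ and define $x_n$ as the trajectory obtained from $x$ by shifting $s_j$ to $1/2 + 1/n$, keeping all other jump times fixed. A piecewise-linear $\lambda_n \in \Lambda$ sending $1/2 \mapsto 1/2 + 1/n$ and fixing every other $s_i$ shows $d_S(x_n, x) \to 0$, yet $x_n(1/2) = x_0 e^{\mu/2}(1+a)^{j-1}$ while $x(1/2) = x_0 e^{\mu/2}(1+a)^{j}$, and these differ since $a \neq 0$.

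For local $V$-continuity I would split into two cases. If $x$ has no jump at $1/2$, take $U_x = \mathcal{J}^{a,\mu}(x_0)$; then any $y_n \to x$ in $d_S$ satisfies $y_n(1/2) \to x(1/2)$ by the standard fact that Skorohod convergence forces pointwise convergence at continuity points of the limit. If $x$ does have a jump at $1/2$, list its jumps as $s_1 < \cdots < s_m$ with $s_j = 1/2$, choose $\eta > 0$ smaller than half the distance from $1/2$ to the nearest other $s_i$, and set
\[
U_x = \{\,y \in \mathcal{J}^{a,\mu}(x_0) : y \text{ has exactly } m \text{ jumps } s_1^y < \cdots < s_m^y,\ s_j^y \in (s_j - \eta,\, 1/2),\ s_i^y \in (s_i - \eta,\, s_i + \eta) \text{ for } i \neq j\,\}.
\]
All defining inequalities being strict, $U_x$ is open in $d_S$; and $x \in \overline{U_x}$ via approximants $y_k$ with $s_j^{y_k} = 1/2 - 1/k$ and the remaining jump times unchanged. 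By construction every $y \in U_x$ has exactly $j$ jumps in $[0,1/2]$, so $V_{\Phi}(T,y) = y(1/2) = x_0 e^{\mu/2}(1+a)^j = V_{\Phi}(T,x)$, and the required convergence is trivial on $U_x$.

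The technical crux is the jump-point case: $U_x$ must simultaneously be open, be approachable by $x$ in closure, and be restrictive enough that the evaluation at $1/2$ is pinned down on it. The parametric nature of $\mathcal{J}^{a,\mu}(x_0)$ (each trajectory is coded by a finite ordered list of jump times, with all jumps of the same multiplicative size $1+a$) makes the argument manageable, since Skorohod convergence on this family reduces essentially to convergence of matching jump-time configurations, and thus open constraints on jump times translate directly into open subsets of $(\mathcal{J}^{a,\mu}(x_0), d_S)$.
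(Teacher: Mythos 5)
Your proof is correct and follows essentially the same route as the paper: split on whether $x$ jumps at $1/2$, and in the jump case choose an open set of trajectories whose $j$-th jump has been pushed strictly before $1/2$, so that $V_{\Phi}(T,\cdot)=x(1/2)$ is constant there — your jump-time-window description of $U_x$ is equivalent to the paper's characterization via $y(1/2)=x(1/2)$, $y(1/2^-)\neq x(1/2^-)$. The only cosmetic difference is that you prove the failure of continuity by an explicit jump-shifting sequence, whereas the paper cites Proposition 4 of \cite{AFO2011} for that part.
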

\begin{proof}
It is easy to see that $\Phi$ is NP-admissible, the fact that it is not V-continuous is proven in Proposition 4 of
\cite{AFO2011}.
Let $x \in \mathcal{J}^{a, \mu}(x_0)$. If $x(1/2)-x(1/2-)=0$ (meaning that $x$ doesn't jump
at $t=1/2$) then $V_{\Phi}(T,\cdot)$ is continuous at $x$, hence locally continuous at
$x$.\\
If $x(1/2)-x(1/2-) \neq 0$ consider,
\begin{equation} \nonumber
U_x^{\epsilon}=\{ y \in \mathcal{J}^{a, \mu}(x_0): 0< d_S(y,x)< \epsilon,
 y(1/2)=x(1/2), y(1/2-) \neq x(1/2-)\}.
\end{equation}
Then: $(i)~x \in \overline{U}_x^{\epsilon}$ and  $(ii)~V_{\Phi}(t,x_n) \to V_{\Phi}(t,x)$ if $x_n \to x$ in $U_x^{\epsilon}$.
\qed
\end{proof}

A basic class of portfolios is defined through sequences of NP-stopping times; these sequences are introduced in the following definition and the associated portfolios are introduced in Definition \ref{simplePortfolios}.

\begin{definition}[(Unbounded) Finite Sequence of Stopping Times] \label{unboundedSequenceOfStoppingTimes}
Let $\mathcal{J}$ be a class of trajectories and consider a non decreasing sequence $\tau= \{\tau_n\}$ of
NP-stopping times $0=\tau_0 \leq \tau_1 \leq \tau_2\leq \cdots \leq T$ such that, for each $x \in \mathcal{J}$, there is a smallest integer $M(x)$ satisfying
$\tau_{M(x)}(x)= T$. Such a sequence  is said to be a  finite sequence of stopping times.
\end{definition}

\noindent
The case of a bounded number of stopping times $0 = \tau_0 \leq \tau_1 \leq \ldots \leq \tau_N=T$ is covered by the above definition
by taking $M(x) =N$ for all $x$.

\begin{definition}[Joint Strong Locally Continuity] \label{joint-strong-LC}
Let $\mathcal{J}$ be a class of trajectories provided with metric $d$. Consider a finite sequence of NP-stopping times   $\tau = \{\tau_n\}$ as per Definition \ref{unboundedSequenceOfStoppingTimes}. Such a sequence  is said to
be jointly strong locally continuous on $\mathcal{J}$, with respect to $d$,
if for all $x\in \mathcal{J}$ there exists an open set $U_x \subset \mathcal{J}$
such that $x \in \overline{U}_x$ and whenever $U_x \ni x_n \to x$:
\begin{itemize}
\item i)  $\displaystyle{\lim_{n \to \infty} \tau_i(x_n) = \tau_i(x)}$ for all $i$.
\item ii) $\displaystyle{\lim_{n \to \infty} x_n(\tau_i(x_n)) = x(\tau_i(x))}$ for all $i$.
\item iii)
$\displaystyle{\lim_{n \to \infty}M(x_n) = M(x)}$.
\end{itemize}
\end{definition}

Definition \ref{joint-strong-LC} indicates that all stopping times $\tau_i$ are
strong locally continuous, not only individually, but jointly in the sense that
the open subset $U_x \subset \mathcal{J}$ is common to all stopping times.

\begin{remark} \label{finiteNumberOfStoppingTimes}
In the case of a finite number of stopping times $M(x) =N$ for all $x$, and so item $iii)$ in Definition \ref{joint-strong-LC} holds for any $\mathcal{J}$ and $d$. Moreover, when $N=2$, which represents the case of a single NP-stopping time, Definition \ref{joint-strong-LC} coincides with Definition \ref{strongLocallyContinuous}
 and so, the requirement of joint strong local continuity reduces to
strong locally continuity.
\end{remark}

\begin{definition} [Simple Portfolios] \label{simplePortfolios}
Assume as given: $\tau = \{\tau_n\}$ a finite sequence of stopping times as per Definition \ref{unboundedSequenceOfStoppingTimes}, functions $\phi_0(\cdot), \phi_1(\cdot),...$ defined  on $\mathcal{J}$
satisfying $\phi_i(x) = \phi_i(\hat{x})$ for any  $x, \hat{x} \in \mathcal{J}$ satisfying  $x(s) = \hat{x}(s)$, $0 \leq s \leq \tau_i(x)$,  and a real number $V_0$.
Fix $x \in \mathcal{J}$, $t \in [0, T]$ and define:
\begin{equation} \label{portfolioFromNPStoppingTimesAtZero}
\phi(t,x) \equiv \phi_{0}(x)~ 1_{[0, \tau_{1}(x)]}(t)+ \sum_{k \geq 1}~ \phi_k(x) 1_{(\tau_k, \tau_{k+1}]}(t).
\end{equation}
For fixed $x$ and   $t  \in (\tau_{j}(x) , \tau_{j+1}(x)]$, define:
\begin{equation}\label{portfoliosViaStoppingTimes}
V(t,x)=V_0+\sum_{k=0}^{j-1}~\phi_{k}(x)\left[ x(\tau_{k+1}(x))-x(\tau_{k}(x)) \right]+
\phi_{j}(x)\left[ x(t)-x(\tau_{j}(x)) \right]
\end{equation}
 and $~V(0,x) = V_0$. Finally, define: $\psi(t,x) = V(t^{-},x)- \phi(t,x)~ x(t^{-})$ and NP-portfolio strategy $\Phi=(\psi, \phi)$. $\Phi$ will be said to be a NP-simple portfolio
associated to the sequence $\tau =\{\tau_n\}$.
\end{definition}

\begin{theorem}  \label{simplePortfoliosAreLC}
Let $\tau = \{\tau_n\}$ be as in Definition \ref{joint-strong-LC} and consider the associated simple portfolio $\Phi$
as in Definition \ref{simplePortfolios} and further assume that the functions $\phi_k$ appearing
in (\ref{portfolioFromNPStoppingTimesAtZero}) are continuous functions.
Then, $\Phi$ is a NP-portfolio  that is NP-predictable, NP-self-financing  and locally V-continuous.
\end{theorem}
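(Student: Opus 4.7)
The plan is to verify each of the three asserted properties in turn: NP-predictability and NP-self-financing are essentially structural consequences of Definitions \ref{definitionOfNPStoppingTimes} and \ref{simplePortfolios} together with the integration convention (\ref{minimumIntegralRequirement}), while local $V$-continuity is the substantive statement and uses the joint strong local continuity of $\tau$.

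For NP-predictability, given $x, x' \in \mathcal{J}$ agreeing on $[0, t)$, I would identify the index $j$ with $t \in (\tau_j(x), \tau_{j+1}(x)]$ (or $t \in [0, \tau_1(x)]$ with $j = 0$) and show that $t$ sits in the analogous interval for $x'$ with $\phi_j(x) = \phi_j(x')$. For $i \leq j$, $\tau_i(x) \leq \tau_j(x) < t$ forces agreement on $[0, \tau_i(x)]$, so the NP-stopping time property gives $\tau_i(x') = \tau_i(x)$; and if $\tau_{j+1}(x') < t$ were possible, agreement on $[0, \tau_{j+1}(x')]$ would force $\tau_{j+1}(x') = \tau_{j+1}(x)$, contradicting $\tau_{j+1}(x) \geq t$. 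The hypothesis that $\phi_j$ depends on $x$ only through its values on $[0, \tau_j(x)]$ then yields $\phi_j(x) = \phi_j(x')$, hence $\phi(t, x) = \phi(t, x')$, and the analogous identity for $\psi$ follows from $\psi(t, x) = V(t^-, x) - \phi(t, x) x(t^-)$. The LCRL property of $\Phi_x$ is immediate from the piecewise-constant structure on half-open intervals $(\tau_k(x), \tau_{k+1}(x)]$. For NP-self-financing, with $r = 0$ the existence of the Stieltjes integral of the piecewise-bounded $\psi$ is trivial, and applying (\ref{minimumIntegralRequirement}) with the $x$-dependent partition $\{\tau_i(x)\}$ and constants $\{\phi_i(x)\}$ reproduces exactly the formula (\ref{portfoliosViaStoppingTimes}), so $V_{\Phi}(t, x) = V(t, x) = V_0 + \int_0^t \phi(s, x) dx_s$.

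The substantive work is local $V$-continuity. I would fix $x \in \mathcal{J}$, take the open set $U_x$ supplied by joint strong local continuity of $\tau$, and consider any sequence $U_x \ni x_n \to x$. Items (i)-(iii) of Definition \ref{joint-strong-LC} give $\tau_i(x_n) \to \tau_i(x)$ and $x_n(\tau_i(x_n)) \to x(\tau_i(x))$ for every $i$, together with $M(x_n) \to M(x)$; integer-valuedness of $M$ then gives $M(x_n) = M(x) =: M$ for all $n$ large enough, with $\tau_M(x_n) = T$. Using (\ref{portfoliosViaStoppingTimes}) at $t = T$,
\begin{equation} \nonumber
V_{\Phi}(T, x_n) = V_0 + \sum_{k=0}^{M-1} \phi_k(x_n) \bigl[ x_n(\tau_{k+1}(x_n)) - x_n(\tau_k(x_n)) \bigr],
\end{equation}
and the continuity of each $\phi_k$ with respect to $d$, together with the convergence of the stopped values, yields $V_{\Phi}(T, x_n) \to V_{\Phi}(T, x)$, which is the required local continuity.

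The hardest part is the predictability step, where the gap between the $[0, t)$-agreement of Definition \ref{def:NP-conditions}(i) and the $[0, \tau(x)]$-agreement built into Definition \ref{definitionOfNPStoppingTimes} has to be bridged by the contradiction argument noted above at $t = \tau_{j+1}(x)$. A secondary subtlety in the continuity step is the possibility that $\tau_k(x) = \tau_{k+1}(x)$ for some $k$, but then the corresponding summand vanishes identically and causes no issue under the limit.
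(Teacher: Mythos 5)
Your proposal is correct and follows essentially the same route as the paper: the same largest-index/contradiction argument at $\tau_{j+1}$ for NP-predictability, the same appeal to (\ref{minimumIntegralRequirement}) for the self-financing identity, and the same use of the common open set $U_x$ from joint strong local continuity plus continuity of the $\phi_k$ (with $M(x_n)=M(x)$ eventually, by integer-valuedness) for local $V$-continuity. The subtleties you flag, bridging $[0,t)$-agreement to $[0,\tau_i(x)]$-agreement and the degenerate case $\tau_k(x)=\tau_{k+1}(x)$, are exactly the points the paper's proof also handles.
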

\begin{proof}
Notice that $\phi(0,x) = \phi_0(0,x)$ and the dependency of $ \phi_0(0,x)$ on $x$ is only through $x(0)$,
it follows that
 $\Phi(0,x)= \Phi(0,x')$ for any $x, x' \in \mathcal{J}$. To prove NP-predictability of $\Phi$ consider $t \in (0, T]$ and
 $x(s) = y(s)$,~ $0 \leq s <t$, ~ $x, y \in \mathcal{J}$. Let $n$ be the largest integer such that $\tau_n(x) < t$, such an integer exists
 because $\tau_0(x)=0$, $\tau_{M(x)}(x) = T$ and $\tau_i \leq \tau_{i+1}$. It follows that $\tau_n(x) = \tau_n(y)$, $\phi(t,x) = \phi_n(x)$
 and $\tau_{n+1}(x) \geq t$. Also $\tau_{n+1}(y) \geq t$ otherwise $\tau_{n+1}(y) = \tau_{n+1}(x) <t$ which contradicts our selection of $n$. It follows then that  $\phi(t,y) = \phi_n( y) = \phi_n( x)= \phi(t,x)$. This reasoning also shows $\tau_k(x) = \tau_k(y)$ and
 so $x(\tau_k(x)) = y(\tau_k(y))$ for all $0 \leq k \leq n$; therefore:
\begin{equation}  \label{toBeUsedBelow}
 V(t, x) -  V(t, y) = \phi(t, x) (x(t) - y(t)).
\end{equation}
From the definition it follows that $\phi(t^-,x) = \phi(t,x)$ and notice that  $V(t^-,x)$ exists because $x(t^-)$ exists. It is also straightforward to check that $V(t,x) -V(t^-,x) = \phi(t,x) (x(t) - x(t^-))$ which gives $\psi(t^-, x) = \psi(t, x)$.  Using (\ref{toBeUsedBelow})
we obtain $\psi(t, x)= \psi(t, y)$. It is also straightforward to prove that right limits exist for $\phi$ and $\psi$ as well.
Summarizing, we have argued that $\Phi= (\psi, \phi)$ is NP-predictable.

\noindent
From the definition of $\psi$ it follows that $V_{\Phi}(t,x) = V(t,x)$ for all $t \in [0, T]$ and all $x \in \mathcal{J}$. Therefore, by means
of (\ref{minimumIntegralRequirement}), we obtain
$$
V_{\Phi}(t,x) = V(t,x) = V_{\Phi}(0,x_0) + \int_0^t \phi(s, x)~dx_s~\mbox{for all}~~x \in \mathcal{J},
$$
where $V_{\Phi}(0,x_0) = V_0$, hence $\Phi$ is NP-self financing.

It remains to  check the locally V-continuous property.
For every possible trajectory $x \in \mathcal{J}$, the value of the NP-portfolio $\Phi$ at maturity time $T$  can be expressed as:
\begin{equation}  \nonumber 
V_{\Phi}(T,x)=V_{\Phi}(0,x_0)+\sum_{i=0}^{M(x)-1}\phi_i(x)~\left[ x(\tau_{i+1}(x))-x(\tau_i(x)) \right].
\end{equation}
Consider a fixed, but arbitrary, $x^{\ast} \in \mathcal{J}$,
as $0=\tau_0 \leq \tau_1\leq \tau_2\leq \cdots \leq T$  is a jointly strong locally continuous sequence of NP-stopping times,
there exists an open set $U_{x^*}$ with $x^{\ast} \in \overline{U}_{x^*}$
such that if $x_n \rightarrow x^{\ast}$, with $x_n \in U_{x^*}$, then  $x_n(\tau_{i+1}(x_n))-x_n(\tau_i(x_n)) \rightarrow x^{\ast}(\tau_{i+1}(x^{\ast}))-x^{\ast}(\tau_i(x^{\ast}))$
for all $0 \leq i$. Moreover, as $M(\cdot)$ is integer valued,
$M(x_n) = M(x^{\ast})$ for $n$ large enough by Definition \ref{joint-strong-LC}.  Given that the functions $\phi_0, \ldots, \phi_{M(x^{\ast})}$ are continuous
there are neighborhoods $W_i$ of $x^{\ast}$ such that if $x_n \rightarrow x^{\ast}$, with $x_n \in W_{i}$, then  $\phi_i(x_n) \rightarrow \phi_i(x^{\ast})$
for all $0 \leq i  \leq M(x^{\ast})$. Consider
$W = \cap_{i=0}^{M(x^{\ast})} W_i$ and $V_{x^{\ast}} \equiv W \cap U_{x^*}$. It follows that $x^{\ast} \in \overline{V}_{x^*}$ and hence $V_{x^{\ast}}  \neq \varnothing$, moreover if $x_n \rightarrow x^{\ast}$, with $x_n \in V_{x^{\ast}}$, then
$V_{\Phi}(T,x_n) \to V_{\Phi}(T,x^*)$. Therefore portfolio $\Phi$ is locally V-continuous.
\qed
\end{proof}
The same proof can also be adapted to establish the following corollary.
\begin{corollary}  \label{cor:simplePortfoliosAreLC}
Consider the setting of Theorem \ref{simplePortfoliosAreLC} and assume $\phi_i(x) = \hat{\phi}(x(\tau_i(x)))$ where $\hat{\phi}: \mathbb{R}_+ \rightarrow \mathbb{R}$ is continuous. Then, the
conclusions  of Theorem \ref{simplePortfoliosAreLC} hold.
\end{corollary}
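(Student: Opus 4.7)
The plan is to follow the structure of the proof of Theorem \ref{simplePortfoliosAreLC} almost verbatim, replacing the role of the explicit continuity hypothesis on $\phi_i$ by a combined use of the joint strong local continuity of the stopping times together with the continuity of $\hat{\phi}$.

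First I would verify the hypothesis required in Definition \ref{simplePortfolios}, namely that $\phi_i(x) = \phi_i(\hat{x})$ whenever $x(s) = \hat{x}(s)$ for $0 \leq s \leq \tau_i(x)$. This is automatic: because $\tau_i$ is a NP-stopping time, such agreement forces $\tau_i(x) = \tau_i(\hat{x})$, and therefore $x(\tau_i(x)) = \hat{x}(\tau_i(\hat{x}))$, whence $\hat{\phi}(x(\tau_i(x))) = \hat{\phi}(\hat{x}(\tau_i(\hat{x})))$. With this in hand, the arguments in the proof of Theorem \ref{simplePortfoliosAreLC} establishing NP-predictability and NP-self-financing transfer without change, since those parts never invoked continuity of the $\phi_i$.

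For the local V-continuity, I would fix an arbitrary $x^{\ast} \in \mathcal{J}$ and invoke the joint strong local continuity of the sequence $\tau=\{\tau_n\}$ to obtain an open set $U_{x^{\ast}} \subset \mathcal{J}$ with $x^{\ast} \in \overline{U}_{x^{\ast}}$ such that, for every sequence $U_{x^{\ast}} \ni x_n \to x^{\ast}$, we have $\tau_i(x_n) \to \tau_i(x^{\ast})$ and $x_n(\tau_i(x_n)) \to x^{\ast}(\tau_i(x^{\ast}))$ for every $i$, and $M(x_n) = M(x^{\ast})$ for $n$ large enough. Continuity of $\hat{\phi}$ then yields, for each such $i$,
$$
\phi_i(x_n) \;=\; \hat{\phi}\bigl(x_n(\tau_i(x_n))\bigr) \;\longrightarrow\; \hat{\phi}\bigl(x^{\ast}(\tau_i(x^{\ast}))\bigr) \;=\; \phi_i(x^{\ast}).
$$
Crucially, no auxiliary neighborhoods $W_i$ are required: the continuity of $\phi_i$ along these specific sequences is already built into the selection of $U_{x^{\ast}}$. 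Hence, setting $V_{x^{\ast}} = U_{x^{\ast}}$ and using
$V_{\Phi}(T,x) = V_0 + \sum_{i=0}^{M(x)-1} \phi_i(x)\,[x(\tau_{i+1}(x))-x(\tau_i(x))]$,
each term converges and the finite sum passes to the limit, giving $V_{\Phi}(T,x_n) \to V_{\Phi}(T,x^{\ast})$.

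There is no real obstacle beyond bookkeeping the composition of limits; the essential simplification is that the dependence of $\phi_i$ on $x$ factors through the single quantity $x(\tau_i(x))$, which is precisely the object whose convergence is guaranteed by condition $ii)$ of Definition \ref{joint-strong-LC}. Thus continuity of $\hat{\phi}$ on $\mathbb{R}_+$ is sufficient to replace the abstract continuity hypothesis imposed in Theorem \ref{simplePortfoliosAreLC}.
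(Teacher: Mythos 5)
Your proposal is correct and is exactly the adaptation the paper intends — the paper gives no separate proof of the corollary, stating only that the proof of Theorem \ref{simplePortfoliosAreLC} "can be adapted," and your write-up supplies precisely that adaptation: the well-definedness of $\phi_i$ via the stopping-time property, and local $V$-continuity via condition $ii)$ of Definition \ref{joint-strong-LC} composed with the continuity of $\hat{\phi}$, which correctly dispenses with the auxiliary neighborhoods $W_i$.
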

\begin{remark}
The proof of Theorem \ref{simplePortfoliosAreLC} can also be adapted to  cover the case of $\phi(x) = \hat{\phi}(x, \tau_i(x))$ where $\hat{\phi}: \mathcal{J} \times [0,T] \rightarrow \mathbb{R}$ is continuous under the product topology.
\end{remark}

\section{Arbitrage}  \label{arbitrage}

This section provides a high level theorem that allows to transfer no arbitrage results from a standard,  i.e. probabilistic,  setting to a NP setting and vice-versa.
Most of the technical details are implicit in the hypothesis which will need careful consideration in specific instances. The general approach
links probabilistic and NP-portfolios therefore, at this point, we need to introduce some precision about the hypotheses on the probabilistic models that fall under the scope of our results.

The notion of probabilistic market that we use throughout the paper
is similar to the one in \cite{valkeila-2}.
Assume a filtered probability space $(\Omega, \mathcal{F}, (\mathcal{F}_t)_{t \geq 0}, P)$
is given. Let $Z$ be a RCLL adapted stochastic process modelling asset prices defined on this
space.
A portfolio strategy $\Phi^z$ is a
pair of stochastic processes $\Phi^z=(\psi^z, \phi^z)$. The value of
a portfolio $\Phi^z$ at time $t$ is a random variable given by:
\[
V_{\Phi^z}(t)=\psi^z_t+ \phi^z_t Z_t.
\]

A portfolio $\Phi^z$ is self-financing if the integrals $\int_0^t
\psi^z_s(\omega) ds$ and $\int_0^t \phi^z_s(\omega) d Z_s(\omega)$
exist $P$~-a.s. as a F\"ollmer integral
and
\begin{equation} \nonumber
V_{\Phi^z}(t)=V_{\Phi^z}(0) + \int_0^t
\phi^z_s d Z_s, \; P-a.s.
\end{equation}
From now on,  and without further comments,
all  (stochastic) portfolios $\Phi^z$  will be  assumed to be self-financing  and predictable.

\begin{definition} \label{stochasticAdmissiblePortfolio}
A portfolio $\Phi^z$ is admissible if $\Phi^z$ is self-financing, predictable,
and there exists $A^z = A^z(\Phi^z)  \geq 0$ such that $V_{\Phi^z}(t) \geq -~A^z$ $P$~-a.s. $\forall ~t \in [0,T]$.
\end{definition}

\begin{definition}\label{stoch-market}
A stochastic market defined on a filtered probability space
$(\Omega, \mathcal{F}, (\mathcal{F}_t)_{t \geq 0}, P)$ is a
pair $(Z,\mathcal{A}^Z)$ where $Z$ is an adapted stochastic process
modeling asset prices and $\mathcal{A}^Z$ is a class of admissible
portfolio strategies.
\end{definition}
\begin{remark}
We assume $\mathcal{F}_0$ is the trivial sigma algebra, furthermore, without loss of generality, we will assume
that the constant $z_0 = Z(0, w)$ is fixed, i.e. we assume the same initial value for all paths.
The constant $V_{\Phi^z}(0, w)$ will also be denoted $V_{\Phi^z}(0, z_0)$.
\end{remark}

\noindent The notion of arbitrage on a probabilistic market is standard  (in this paper will be referred simply as {\it arbitrage}).
Given a process $Z$ as above, a portfolio $\Phi^z$ is an arbitrage opportunity if: $V_{\Phi^z}(0)=0$ and  $V_{\Phi^z}(T) \geq 0 $,~$P-a.s.$,
and $P(V_{\Phi^z}(T)> 0) >0$. $(Z,\mathcal{A}^Z)$ is arbitrage free if $\Phi^z$ is not an arbitrage, for all $\Phi^z \in \mathcal{A}^Z$.

\vspace{.2in}
\noindent
Given a stochastic process $Z$ and a trajectory space $\mathcal{J}$ as above, consider the
map $Z: \Omega \rightarrow \mathbb{R}^{[0, T]}$ defined by $Z(w)(t)= Z_t(w)$ and
introduce the following two conditions:

\vspace{.05in} \noindent $C_0:$~~~~ $Z(\Omega) \subseteq \mathcal{J}$~a.s.

\noindent
$C_1:$~~~~$Z$ satisfies a small ball property with respect to
the metric $d$ and the space $\mathcal{J}$, namely $\mbox{for all}~~~ \epsilon>0$ and for all $x$ in $\mathcal{J}$:
\[
P\left(d(Z,x)<\epsilon \right)>0.
\]

\begin{definition} \label{isomorphic}
Let a trajectory space $\mathcal{J}$ and a stochastic process $Z$ be given such that condition $C_0$ holds.
A NP-portfolio $\Phi$ defined on $\mathcal{J}$ and a stochastic portfolio $\Phi^z $
are said to be isomorphic if :
\[
P\left( \Phi^z(t,\omega)=\Phi(t,Z(\omega))\; \text{for} \; 0 \leq t \leq T \right)=1.
\]
\end{definition}

The following Theorem is proven similarly to Theorems 1 and 2  from \cite{AFO2011}, we provide the proof for the convenience of the reader.

\begin{theorem} \label{main-arbitrage-local-cont}
Let a trajectory space $\mathcal{J}$ and a stochastic process $Z$ be given such that conditions $C_0$ and $C_1$ hold.
Assume $\Phi$ and $\Phi^z$
are isomorphic, and that, furthermore, $\Phi$ is locally V-continuous, then:\\

\noindent
i) If $\Phi^z$ is not an arbitrage, then $\Phi$ is not a NP-arbitrage portfolio.\\
ii) If $\Phi$ is not a NP-arbitrage, then $\Phi^z$ is not an arbitrage portfolio.
\end{theorem}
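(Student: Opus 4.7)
The plan is to prove both directions by contrapositive, using the isomorphism to translate between $V_{\Phi^z}(T,\omega)$ and $V_{\Phi}(T,Z(\omega))$. The key observation is that, by Definition \ref{isomorphic}, $V_{\Phi^z}(T,\omega)=V_{\Phi}(T,Z(\omega))$ on a set of full probability, because $Z_T(\omega)=Z(\omega)(T)$ and the pair $(\psi^z_T,\phi^z_T)(\omega)$ agrees a.s.\ with $(\psi,\phi)(T,Z(\omega))$. Combined with $C_0$, this means $V_{\Phi^z}(T)$ takes its values in the range of $V_\Phi(T,\cdot)$ over $\mathcal{J}$ up to a $P$-null set.

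For part (i), suppose $\Phi$ is an NP-arbitrage: then $V_\Phi(T,x)\geq 0$ for every $x\in\mathcal{J}$, and there exists $x^{\ast}\in\mathcal{J}$ with $V_\Phi(T,x^{\ast})>0$. The first property, combined with $C_0$ and the isomorphism, immediately yields $V_{\Phi^z}(T)\geq 0$ $P$-a.s. For the strict positivity I would invoke local V-continuity at $x^{\ast}$: there exists an open $U_{x^{\ast}}\subset\mathcal{J}$ with $x^{\ast}\in\overline{U}_{x^{\ast}}$ such that, picking $\eta=V_\Phi(T,x^{\ast})/2>0$, one can find $\delta>0$ with $V_\Phi(T,y)>\eta$ for every $y\in U_{x^{\ast}}\cap B(x^{\ast},\delta)$. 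This intersection $W$ is nonempty because $x^{\ast}$ lies in the closure of $U_{x^{\ast}}$ and $B(x^{\ast},\delta)$ is a neighborhood of $x^{\ast}$. Pick any $y_0\in W$ and $\epsilon>0$ with $B(y_0,\epsilon)\subset W$, then apply the small ball property $C_1$ at $y_0$ to conclude $P(Z\in W)\geq P(d(Z,y_0)<\epsilon)>0$. Composing with the isomorphism gives $P(V_{\Phi^z}(T)>0)>0$, so $\Phi^z$ is an arbitrage.

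For part (ii), suppose $\Phi^z$ is an arbitrage, so $V_{\Phi^z}(T)\geq 0$ $P$-a.s.\ and $P(V_{\Phi^z}(T)>0)>0$. The existence of a trajectory with strictly positive terminal value is direct: on the intersection (which has positive probability) of $\{V_{\Phi^z}(T)>0\}$, $\{Z\in\mathcal{J}\}$, and the a.s.\ set on which the isomorphism holds, pick any $\omega^{\ast}$ and set $x^{\ast}=Z(\omega^{\ast})\in\mathcal{J}$; then $V_\Phi(T,x^{\ast})=V_{\Phi^z}(T,\omega^{\ast})>0$. The delicate step is to rule out any $x_0\in\mathcal{J}$ with $V_\Phi(T,x_0)<0$. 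Assuming such an $x_0$ exists, the same local V-continuity argument as above produces a nonempty open set $W=U_{x_0}\cap B(x_0,\delta)$ on which $V_\Phi(T,\cdot)<0$; the small ball property applied at an interior point of $W$ then forces $P(Z\in W)>0$, and via the isomorphism $P(V_{\Phi^z}(T)<0)>0$, contradicting admissibility-style nonnegativity of $V_{\Phi^z}(T)$.

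The main obstacle in both directions is that local V-continuity is a one-sided notion: continuity at $x^{\ast}$ only holds for sequences approaching through the auxiliary open set $U_{x^{\ast}}$, so one cannot directly center a small-ball argument at $x^{\ast}$ itself. The resolution, which is the pivotal technical step, is that the set $W=U_{x^{\ast}}\cap B(x^{\ast},\delta)$ inherits its nonemptiness from the condition $x^{\ast}\in\overline{U}_{x^{\ast}}$, and then $C_1$ can be applied at any genuine interior point $y_0\in W$ rather than at $x^{\ast}$; this converts a local-continuity statement into a genuine probabilistic lower bound and is exactly where the definition of local V-continuity dovetails with the small ball hypothesis.
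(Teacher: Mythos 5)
Your proposal is correct and follows essentially the same route as the paper: transfer terminal values through the isomorphism on the full-measure set given by $C_0$, and for the sign-definiteness use local $V$-continuity to produce a nonempty open set $W$ with $x^{\ast}\in\overline{W}$ on which $V_\Phi(T,\cdot)$ keeps a strict sign, then apply the small ball property at an interior point of $W$ rather than at $x^{\ast}$ itself. The only cosmetic difference is that the paper isolates the ``open set where $f$ stays in an interval'' step as Proposition \ref{positiveInOpenSet}, whereas you prove it inline.
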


\begin{proof}
We proceed to prove $i)$ by contradiction.  Suppose then,
that $\Phi$ satisfies:  $V_{\Phi}(0, x_0) =0$, $V_{\Phi}(T,x)
\geq 0$ for all $ x \in \mathcal{J}$ and there is also $x^{\ast} \in
\mathcal{J}$ such that $V_{\Phi}(T,x^{\ast}) >  0$. Therefore, given that $\Phi^z$ is isomorphic to $\Phi$,
there exists $\Omega_1 \subset \Omega$ with $P(\Omega_1)=1$ such that
$V_{\Phi^z}(0, z_0) = V_{\Phi}(0, x_0) =0$ and $V_{\Phi^z}(T, w) = V_{\Phi}(T, Z(w))$ for
all $w \in \Omega_1$. Define $\Omega_2=\left\{ w \in \Omega: Z(\cdot, \omega) \in \mathcal{J} \right\}$.
Condition $C_0$ implies that $P(\Omega_2)=1$, hence $P(\Omega_1 \cap \Omega_2)=1$.
Consider $w \in \Omega_1 \cap \Omega_2$; then it follows that
$V_{\Phi^z}(T, \omega) \geq 0$, therefore $V_{\Phi^z}(T) \geq 0$ holds $P$-a.s.

Consider $f(x) \equiv  V_{\Phi}(T,x)$ and $\hat{x} \in V_{x^{\ast}}$, where $V_{x^{\ast}}$ is  given as in
Proposition \ref{positiveInOpenSet} (see Appendix). Given that  $V_{x^{\ast}}$ is a nonempty open set, there exists $\epsilon >0$ such that 
$B_\epsilon \equiv \{y: d(y, \hat{x}) < \epsilon\} \subseteq  V_{x^{\ast}}$ and $P(Z(w) \in B_\epsilon) >0$. Therefore
$V_{\Phi^z}(T, \omega) > 0$ on $\{Z(w) \in B_\epsilon\} \cap \Omega_1 \cap \Omega_2$ and this last set has non zero probability; this concludes the proof.

The proof of $ii)$ is similarly achieved by contradiction. Assume $\Phi^z$ is an arbitrage portfolio while $\Phi$ is not.
Notice that $V_{\Phi^z}(0, z_0) = V_{\Phi}(0, x_0)$ and $V_{\Phi^z}(T, w) = V_{\Phi}(T, Z(w))$ a.s.  Assume now that
$V_{\Phi}(T, \hat{x}) <  0$ for some $\hat{x} \in \mathcal{J}$,
local continuity of $V_{\Phi}(T, \cdot)$, an application of Proposition
\ref{positiveInOpenSet} and the small balls property  gives $V_{\Phi}(T, Z(w)) < 0$ for all $w$ in a set of nonzero measure.
This gives, as $\Phi^z$ is isomorphic to $\Phi$, a contradiction, hence $V_{\Phi}(T, x) \geq  0$ for all $x \in \mathcal{J}$.
Moreover, using the isomorphism relationship once more,  the fact that $\Phi^z$ is an arbitrage and $Z(\Omega) \subseteq \mathcal{J}$ a.s.
it follows that there exists $x^{\ast} \in \mathcal{J}$ such that $V_{\Phi}(T, x^{\ast}) >  0$. This is a contradiction and concludes our proof.
\qed
\end{proof}

\section{A Non Probabilistic Jump Diffusion Class}  \label{nPJumpFiffusionClass}

This section defines a realistic trajectory space, denoted $\mathcal{J}_{\mathcal{T}}^{\sigma, C}(x_0)$, and  proves that a large collection of practical NP-portfolios, acting on $\mathcal{J}_{\mathcal{T}}^{\sigma, C}(x_0)$,  are no arbitrage portfolios. Implications to non-semimartingale
stochastic models are also developed.

Given a refining sequence of
partitions  $\mathcal{T}$, 
denote with $\mathcal{Z}_{\mathcal{T}}([0, T])$ the collection of
all continuous functions $z(t)$ such that
$\left[ z \right]_{t}^{\mathcal{T}} = t$ for $0\leq t \leq T$ and
$z(0) =0$. Notice that $\mathcal{Z}_{\mathcal{T}}([0, T])$ includes a.s. paths of
Brownian motion (see \cite{klein}). For this trajectory space, we will assume  that the notion of integral used in (\ref{integrability-assumption}) is the Ito-F\"{o}llmer integral (see \cite{follmer}).


Fix $\sigma > 0$ and $C$ a non empty set of real numbers such
that $\inf(C)>-1$.  Define $\mathcal{J}_\mathcal{T}^{\sigma, C}(x_0)$ as
the class of real valued functions $x$ on $[0,T]$ such that there exits
$z \in \mathcal{Z}_{\mathcal{T}}([0, T])$, $n(t) \in \mathcal{N}([0,
T])$ (this last class has been introduced in Section \ref{locallyContinuousPortfolios}),  and real numbers $a_i \in C$, $i=1,2,\ldots,m$, verifying:
\begin{equation} \label{eq:diffpoisson}
x(t)=x_0 e^{\sigma z(t)}\prod_{i=1}^{n(t)} (1+a_i),
\end{equation}
where $n(t)$ was introduced in (\ref{eq:poisson}).

\subsection{Locally V-Continuous Portfolios on $\mathcal{J}_{\tau}^{\sigma, C}(x_0)$}

We call attention to an implication of Lemma \ref{Lemma-correspondence-jumps}, the statement and proof of which can be found in the Appendix,
showing that for $n$ large enough the number of jumps in trajectory $x_n$
between two consecutive stopping times $\tau_i(x_n)$ and $\tau_{i+1}(x_n)$ is the same as the
number of jumps in trajectory $x^*$ between two consecutive stopping times $\tau_i(x^*)$ and  $\tau_{i+1}(x^*)$ (where $x_n$ and $x^{\ast}$ are as in Lemma \ref{Lemma-correspondence-jumps}). This result will be used in the following theorem.

\begin{theorem}  \label{continuousRebalancingPortfolios}
Let $0=\tau_0 \leq \tau_1\leq \tau_2\leq \cdots \leq T$  be a jointly strong locally continuous sequence
of NP-stopping times (as per Definition \ref{joint-strong-LC}) defined on $\mathcal{J}_\mathcal{T}^{\sigma, C}(x_0)$ with respect to the Skorohod's metric. Assume that $\inf_{c \in C}|c|>0$.
Let $\phi_0(\cdot,\cdot), \phi_1(\cdot,\cdot),... $ be functions continuous on $[0,T]\times \mathbb{R}$ and differentiable on $(0,T)\times \mathbb{R}$. Consider
the portfolio strategy given by $\Phi_t=(\psi_t, \phi_t)$ where the amount invested in the stock
$\phi_t$ is such that
\[
\phi(t,x)=1_{[\tau_0, \tau_{1}]}(t)\phi_0(t,x(t^{-}))+\sum_{i=1}^{M(x)-1} 1_{(\tau_i, \tau_{i+1}]}(t)\phi_i(t,x(t^{-})),
\]
and $\psi_t$ is given as described in Remark \ref{selfFinancingBankInvestment}.
Then, the portfolio $\Phi$ is NP-predictable, NP-self-financing and locally V-continuous on $\mathcal{J}_\mathcal{T}^{\sigma,C}$ relative to the
Skorohod's topology.
\end{theorem}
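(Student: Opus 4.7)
The plan is to verify NP-predictability and NP-self-financing first, both following the pattern of Theorem \ref{simplePortfoliosAreLC}, and then concentrate on local V-continuity, which requires a pathwise Ito-F\"ollmer calculation. For NP-predictability, given $x, y \in \mathcal{J}_\mathcal{T}^{\sigma, C}(x_0)$ agreeing on $[0,t)$, let $n$ be the largest index with $\tau_n(x) < t$. As in the proof of Theorem \ref{simplePortfoliosAreLC}, one has $\tau_n(x) = \tau_n(y)$, $\tau_{n+1}(x) \wedge \tau_{n+1}(y) \geq t$, and hence $\phi(t,x) = \phi_n(t, x(t^-)) = \phi_n(t, y(t^-)) = \phi(t,y)$; the LCRL character of $\phi_x(\cdot)$ follows from LCRL-ness of $s \mapsto x(s^-)$, continuity of $\phi_n$, and the left-continuity of the indicators $1_{(\tau_n, \tau_{n+1}]}(\cdot)$. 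NP-self-financing then follows from the definition of $\psi$ in Remark \ref{selfFinancingBankInvestment} together with the existence of the Ito-F\"ollmer integral $\int_0^t \phi(s,x)\,dx_s$ on $\mathcal{J}_\mathcal{T}^{\sigma,C}(x_0)$.

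For local V-continuity, fix $x^\ast \in \mathcal{J}_\mathcal{T}^{\sigma, C}(x_0)$ and decompose
\[
V_\Phi(T,x) - V_0 \;=\; \sum_{i=0}^{M(x)-1} I_i(x), \qquad I_i(x) := \int_{\tau_i(x)}^{\tau_{i+1}(x)} \phi_i(s, x(s^-))\,dx_s.
\]
Take $U_{x^\ast}$ to be the intersection of the open set supplied by the joint strong local continuity of $\{\tau_n\}$ with the neighborhood provided by Lemma \ref{Lemma-correspondence-jumps}. For $x_n \to x^\ast$ in $U_{x^\ast}$ and $n$ large, one has $M(x_n)=M(x^\ast)$, $\tau_i(x_n) \to \tau_i(x^\ast)$, $x_n(\tau_i(x_n)) \to x^\ast(\tau_i(x^\ast))$, and a bijective correspondence between the jumps of $x_n$ on each interval $(\tau_i(x_n), \tau_{i+1}(x_n))$ and those of $x^\ast$ on $(\tau_i(x^\ast), \tau_{i+1}(x^\ast))$, with matched positions and sizes in the limit.

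Each integral $I_i(x)$ is then reduced by applying the pathwise Ito-F\"ollmer formula to the antiderivative $F_i(t,y) := \int_0^y \phi_i(t,u)\,du$, which is $C^{1,2}$ on $(0,T)\times\mathbb{R}$, giving
\[
I_i(x) = F_i(\tau_{i+1}, x(\tau_{i+1})) - F_i(\tau_i, x(\tau_i)) - R_i(x) - Q_i(x) - J_i(x),
\]
with $R_i(x) = \int_{\tau_i}^{\tau_{i+1}} \partial_t F_i(s, x(s^-))\,ds$, $Q_i(x) = \tfrac{1}{2}\int_{\tau_i}^{\tau_{i+1}} \partial_y \phi_i(s, x(s^-))\,d[x]^c_s$, and $J_i(x)$ the finite sum of the usual jump corrections over $(\tau_i,\tau_{i+1}]$. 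Along $x_n \to x^\ast$ in $U_{x^\ast}$: the boundary terms converge by joint strong local continuity; $R_i(x_n) \to R_i(x^\ast)$ follows from uniform convergence of the reparametrized continuous portions of $x_n$ off neighborhoods of the jump times together with continuity of $\partial_t F_i$; $Q_i(x_n) \to Q_i(x^\ast)$ by using (\ref{eq:diffpoisson}) to write $d[x]^c_s = \sigma^2 x(s)^2\,ds$ between jumps, reducing $Q_i$ to a Riemann integral stable under the convergence; and $J_i(x_n) \to J_i(x^\ast)$ by the jump correspondence combined with continuity of $F_i$ and $\phi_i$. Summing over $i=0,\ldots,M(x^\ast)-1$ yields local continuity of $V_\Phi(T,\cdot)$ at $x^\ast$.

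The main obstacle is the joint control of $Q_i$ and $J_i$: the hypothesis $\inf_{c\in C}|c|>0$ is crucial here, since it allows Lemma \ref{Lemma-correspondence-jumps} to separate the multiplicative jumps (which are bounded away from zero in magnitude) from the continuous part of $x$ uniformly for $n$ large, so that the quadratic variation integral and the finite jump sum decouple and each varies continuously with $x$ in the Skorohod topology on $\mathcal{J}_\mathcal{T}^{\sigma, C}(x_0)$.
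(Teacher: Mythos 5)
Your proposal is correct and follows essentially the same route as the paper's proof: NP-predictability and self-financing by the argument of Theorem \ref{simplePortfoliosAreLC}, then local V-continuity via the antiderivative $F_i$ (the paper's $U^i_{\phi}$), the pathwise It\^o--F\"ollmer decomposition into boundary, drift, quadratic-variation and jump-correction terms, the identity $d\langle x\rangle_s^{\mathcal{T}}=\sigma^2 x^2(s^-)\,ds$, and Lemma \ref{Lemma-correspondence-jumps} to match jumps of $x_n$ with those of $x^{\ast}$ on each stopping interval. The only cosmetic difference is your choice of lower limit in the antiderivative and your explicit intersection of the two neighborhoods, neither of which changes the argument.
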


\begin{proof}
Using similar arguments to the ones used in Theorem \ref{simplePortfoliosAreLC} one can prove that
a portfolio $\Phi$ as above is NP-predictable and NP-self-financing. Next we will prove
that it is also locally V-continuous.\\

For $i=0,1,...$ define the functions $U^i_{\phi}: \mathbb{R}^2 \rightarrow \mathbb{R}$ as:
\begin{equation}  \nonumber
U^i_{\phi}(t,y)=\int_{y_0}^y \phi_i(t, \xi) d \xi.
\end{equation}
Let
\begin{equation}  \nonumber
u_{\Phi}(x)=\sum_{i=0}^{M(x)-1} u_{\Phi}^i(x),
\end{equation}
where the functionals $u_{\Phi}^i: \mathcal{J}_\mathcal{T}^{\sigma,C} \rightarrow \mathbb{R}$ are defined as:
\begin{eqnarray}\label{functional-u1}
u_{\Phi}^i(x)&=&U_{\Phi}^i(\tau_{i+1}(x), x(\tau_{i+1}(x)))-U_{\Phi}^i(\tau_i(x), x(\tau_i(x)))\\
           &&-\int_{\tau_i(x)}^{\tau_{i+1}(x)} \frac{\partial U_{\Phi}^i}{\partial t}(s,x(s-))ds
             -\frac{1}{2}\int_{\tau_i(x)}^{\tau_{i+1}(x)} \frac{\partial^2 U_{\Phi}^i}{\partial x^2}(s,x(s-))d \langle x \rangle_s^{\mathcal{T}} \nonumber \\
           &&-\sum_{\tau_i(x) < s \leq \tau_{i+1}(x)} \left[ U_{\Phi}^i(s,x(s))-U_{\Phi}^i(s,x(s-))
             -\frac{\partial U_{\Phi}^i}{\partial x}(s,x(s-))\Delta x(s)\right]. \nonumber
\end{eqnarray}

\noindent
The It\^o-F\"ollmer formula from \cite{follmer} allows us to obtain:

\begin{equation} \nonumber
u_{\Phi}^i(x)=\int_{\tau_i(x)}^{\tau_{i+1}(x)} \frac{\partial U_{\Phi}^i}{\partial x}(s,x(s-)) dx(s)=\int_{\tau_i(x)}^{\tau_{i+1}(x)} \phi_i(s,x(s-)) dx(s),
\end{equation}
then
\begin{equation} \nonumber
u_{\Phi}(x)=\sum_{i=0}^{M(x)-1} u_{\Phi}^i(x)= \int_{0}^{T} \phi(s,x(s-)) dx(s).
\end{equation}

\noindent
For all $x \in \mathcal{J}_\mathcal{T}^{\sigma,C}$, $d \langle x \rangle_s^{\mathcal{T}}= \sigma^2 x^2(s-)ds$,
therefore:
\begin{equation*}
u_{\Phi}^i(x)=U_{\Phi}^i(\tau_{i+1}(x), x(\tau_{i+1}(x)))-U_{\Phi}^i(\tau_i(x), x(\tau_i(x)))-I_{\Phi}^i(x)-S_{\Phi}^i(x)
\end{equation*}
where
\begin{equation}\nonumber 
I_{\Phi}^i(x)=\int_{\tau_i(x)}^{\tau_{i+1}(x)}  \frac{\partial U_{\Phi}^i}{\partial t}(s,x(s-))ds
             +\frac{1}{2}\int_{\tau_i(x)}^{\tau_{i+1}(x)}  \frac{\partial^2 U_{\Phi}^i}{\partial x^2}(s,x(s-))\sigma^2 x^2(s-)ds
\end{equation}
and
\begin{equation} \nonumber
S_{\Phi^i}(x)=\sum_{\tau_i(x) < s \leq \tau_{i+1}(x)} \left[ U_{\Phi}^i(s,x(s))-U_{\Phi}^i(s,x(s-))
            -\frac{\partial U_{\Phi}^i}{\partial x}(s,x(s-))\Delta x(s)\right].
\end{equation}

Fix $x^* \in \mathcal{J}_\mathcal{T}^{\sigma, C}(x_0)$;
as $0=\tau_0 \leq \tau_1\leq \tau_2\leq \cdots \leq T$  is a jointly strong locally continuous sequence
of NP-stopping times, there exists an open set $U_{x^*} \subset \mathcal{J}_\mathcal{T}^{\sigma, C}(x_0)$
such that $x^* \in \overline{U}_{x^*}$ and whenever $U_{x^*} \ni x_n \to x^*$, then, properties   
i), ii) and iii) from Definition \ref{joint-strong-LC} hold. Let $\{x_n\}_{n \geq 1}$ be such a sequence.
We have the following:\\

\noindent
1) Using that $U_{\Phi}^i$ is continuous for all $i$, and the jointly strong local continuity property of 
$\left\{ \tau_n \right \}_{n=0,1, ...}$ we can check that 

$$U_{\Phi}^i(\tau_{i+1}(x_n), x_n(\tau_{i+1}(x_n)))-U_{\Phi}^i(\tau_i(x_n), x_n(\tau_i(x_n)))$$
converges to 

$$U_{\Phi}^i(\tau_{i+1}(x^*), x^*(\tau_{i+1}(x^*)))-U_{\Phi}^i(\tau_i(x^*), x^*(\tau_i(x^*)))$$
as $n$ approaches infinity.\\

\noindent
2) Also, $I_{\Phi}^i(x_n) \to I_{\Phi}^i(x^*)$ as $n$ approaches infinity. This can be proved 
using the same technique as in the proof of Proposition 7 in \cite{AFO2011}.   \\

\noindent
3) Along the lines of the proof of Proposition 7 in \cite{AFO2011} we can also prove that 
$S_{\Phi^i}(x_n) \to S_{\Phi^i}(x^*)$ as $n$ approaches infinity. The only new element in the proof 
is the use of Lemma \ref{Lemma-correspondence-jumps} in order to establish a correspondence between
the jumps of $x_n$ and those of $x^*$ for $n$ large enough. 

Combining 1), 2) and 3) above we get that for all $i$, $u_{\Phi}^i(x_n)$ converges to $u_{\Phi}^i(x^*)$,
therefore $u_{\Phi}(x_n)$ converges to $u_{\Phi}(x^*)$. 

\noindent This implies that

\[
V_{\Phi}(T,x_n)=V_0+ \int_{0}^{T} \phi(s,x_n(s-)) dx_n(s) \to V_0+ \int_{0}^{T} \phi(s,x^*(s-)) dx^*(s)=V_{\Phi}(T,x^*)
\]
therefore $\Phi$ is locally V-continuous on $\mathcal{J}_\mathcal{T}^{\sigma,C}$ relative to the
Skorohod's topology. \qed
\end{proof}

The following proposition provides examples of sequences of NP-stopping times that are jointly strong locally continuous
on $\mathcal{J}_\mathcal{T}^{\sigma, C}(x_0)$.

\begin{proposition} \label{pr:stop-tim-ex}
Let $\{K_i\}_{i=1,2,\ldots}$ be an increasing sequence of real numbers with $K_i \to \infty$, and $K_i>x_0$ for all $i$. If $\inf_{c \in C} |c|>0$ then
the following sequences of NP-stopping times are jointly strong locally continuous on $\mathcal{J}_\mathcal{T}^{\sigma, C}(x_0)$ with respect to the Skorohod's metric:
\begin{itemize}
\item 1) $\tau_i(x)=\min(\frac{i~T}{N}, T)$, for $i=0,1, \ldots,~\mbox{and}~N \geq 1~\mbox{an arbitrary integer}$.
\item 2) $\displaystyle{\tau_i(x)=\min \left(\inf \left\{t: \sum_{s \leq t} 1_{\mathbb{R} \setminus \{0\} } (x(s)-x(s-))\geq i\right\}, T\right) }$, for $i=1,2, \ldots$
\item 3) $\displaystyle{\tau_i(x)=\min \left( \inf \{t:  x_t \geq K_i\}, T \right)}$, for $i=1,2, \ldots$

\end{itemize}
\end{proposition}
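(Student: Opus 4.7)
The plan is to address each of the three items by constructing, for every $x \in \mathcal{J}_\mathcal{T}^{\sigma,C}(x_0)$, an open set $U_x$ with $x \in \overline{U}_x$ that forces the three convergences of Definition \ref{joint-strong-LC}. Throughout, I will use standard properties of the Skorohod topology on $\mathcal{D}[0,T]$: $x_n \to x$ implies $x_n(t) \to x(t)$ at every continuity point $t$ of $x$, the jump times and jump sizes of $x_n$ converge to those of $x$, and thanks to $\inf_{c\in C}|c|>0$ vanishing jumps cannot appear within $\mathcal{J}_\mathcal{T}^{\sigma,C}(x_0)$, so the total number of jumps is locally constant on small Skorohod neighbourhoods.

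For item 1 the functions $\tau_i$ are constant in $x$ and $M(x)\equiv N$, so conditions (i) and (iii) of Definition \ref{joint-strong-LC} hold automatically and only (ii), i.e.\ $x_n(iT/N)\to x(iT/N)$, requires work. When $x$ is continuous at every grid point one may take $U_x=\mathcal{J}_\mathcal{T}^{\sigma,C}(x_0)$; otherwise let $G$ be the finite set of grid points at which $x$ jumps and set $U_x$ to be the trajectories whose jump corresponding to each $t^*\in G$ lies strictly inside $(t^*-\eta, t^*)$, with no other jumps in a slightly larger neighbourhood $(t^*-2\eta, t^*+2\eta)$, for $\eta$ small enough that these neighbourhoods are pairwise disjoint and avoid the other jumps of $x$. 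This $U_x$ is Skorohod-open (the matching is stable under small perturbations because $\inf|c|>0$ prevents jumps from shrinking away and no extraneous jumps can enter the neighbourhood), $x\in\overline{U}_x$ is obtained by shifting each offending jump of $x$ leftward by $1/n$, and for $x_n\in U_x$ with $x_n\to x$ the multiplicative representation (\ref{eq:diffpoisson}) together with continuity of the $z$-component at $t^*$ immediately yields $x_n(t^*)\to x(t^*)$. For item 2, I take $U_x$ to consist of trajectories with exactly $m=m(x)$ jumps, one in each disjoint interval $(s_j-\delta, s_j+\delta)$ (with $\delta$ small and $s_1<\cdots<s_m$ the jump times of $x$); this is again Skorohod-open, contains $x$, and forces $M(x_n)=M(x)=m+1$ together with $\tau_i(x_n)\to\tau_i(x)$ and $x_n(\tau_i(x_n))\to x(\tau_i(x))$ directly from the Skorohod convergence of jump times and post-jump values for $1\le i\le m$ (and trivially for $i>m$, since both sides equal $T$ and $x$ is continuous at $T$).

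Item 3 is the technical heart of the argument. Let $J:=M(x)-1$ be the number of levels $K_i$ actually hit by $x$ strictly before $T$. The main obstacle is that $x$ may touch some $K_i$ tangentially, at which point the hitting time functional fails to be Skorohod-continuous. I define $U_x$ by two open conditions on $y$: (a) for each $i\le J$, the trajectory $y$ strictly overshoots $K_i$, meaning that in every right-neighbourhood of its first hitting of $K_i$ the path $y$ takes values strictly above $K_i$; and (b) $\sup_{[0,T]} y < K_{J+1}-\varepsilon$ for some small $\varepsilon$ depending on $x$. Both conditions are preserved under small Skorohod perturbations, so $U_x$ is open, and on $U_x$ each hitting time $\tau_i$ is Skorohod-continuous by an argument analogous to Proposition \ref{stopTimesInCanonicalFiltrationAreNPStpTimes}, item \ref{crossingLevel}, adapted to strict crossings; in particular $M$ is locally constant on $U_x$. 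The remaining and most delicate step is to show $x\in\overline{U}_x$ when $x$ has tangential crossings: I perturb the continuous component $z$ of $x$ by a sequence of smooth bumps $g_n\to 0$ supported in small neighbourhoods of the tangential hitting times, chosen to push the trajectory strictly above each offending level $K_i$. Since smooth functions have zero quadratic variation and zero cross-variation with $z$, the perturbed $z+g_n$ still satisfies $[z+g_n]^{\mathcal{T}}_t=t$, so the perturbed trajectory remains in $\mathcal{J}_\mathcal{T}^{\sigma,C}(x_0)$; applying this construction simultaneously at all tangential crossings produces the required approximating sequence in $U_x$, after which conditions (i)-(iii) of Definition \ref{joint-strong-LC} follow directly.
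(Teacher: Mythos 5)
Your items 1) and 2) follow essentially the paper's own route: for the deterministic grid you force the jumps of nearby trajectories to land strictly before the grid points (the paper's set $U^{2}_{x^*}$, with the same right-continuity argument for condition ii) of Definition \ref{joint-strong-LC}), and for the jump-counting times you rely on the stability of the jump structure under Skorohod convergence coming from $\inf_{c\in C}|c|>0$ (Lemma 2 of \cite{AFO2011}). Those parts are fine.

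Item 3) is where you genuinely diverge, and there are two gaps. First, the set of trajectories that ``strictly overshoot'' each $K_i$ at \emph{their own} first hitting time is not open in the Skorohod topology: given a marginally overshooting $y$, one can add an arbitrarily small smooth bump (which, as you yourself observe, keeps the path in $\mathcal{J}_\mathcal{T}^{\sigma,C}(x_0)$) creating an earlier tangential touch of $K_i$ after which the path dips back under before crossing; the perturbed trajectory violates condition (a), so (a) is not preserved by small perturbations and your $U_x$ is not open. Note that Definition \ref{joint-strong-LC} only requires the three convergences along sequences in $U_x$ tending to $x$ itself, not continuity of the $\tau_i$ throughout $U_x$; the paper exploits this by anchoring $U_{x^*}$ to the fixed $x^*$ via manifestly open one-sided comparisons of the components of (\ref{eq:diffpoisson}) (the sets $U^{3,\epsilon}_{x^*},U^{4}_{x^*},U^{5}_{x^*}$, or their mirror images), which force $x^{(l)}(t)>x^*(t)$ on $[\epsilon,T]$, hence $\tau_i(x^{(l)})\le\tau_i(x^*)$, with the matching lower bound from uniform convergence; this monotone squeeze is what replaces your overshoot condition. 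Second, your cap $\sup_{[0,T]}y<K_{J+1}-\varepsilon$ is incompatible with $x\in\overline{U}_x$ whenever $\sup_{[0,T]}x=K_{J+1}$ (approached tangentially, or attained exactly at $t=T$): any $y$ Skorohod-close to $x$ then has supremum within $o(1)$ of $K_{J+1}$, so $U_x$ cannot accumulate at $x$. The paper isolates precisely these configurations (its cases 3c and 3d) and handles them by approximating $x^*$ from strictly \emph{below} instead of above; your proposal has no analogue of that case split. Your bump-perturbation density argument is a correct and useful ingredient, but it only addresses tangential touches of the levels $K_i$ with $i\le J$, not these two problems.
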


\begin{proof} See the Appendix.
\end{proof}

\subsection{Arbitrage-Free NP-Portfolios for Jump Diffusion Class $\mathcal{J}_\mathcal{T}^{\sigma, C}$}

This section proves a class of NP-portfolios to be NP-arbitrage free for the trajectory space $\mathcal{J}_\mathcal{T}^{\sigma, C}$.
Towards this end we will make use of Theorem \ref{main-arbitrage-local-cont} which, in turns, requires the introduction of an appropriate stochastic market model.

\begin{definition} \label{jumpDiffusionClass}
For any $x_0>0$ consider, on a probability space
$(\Omega, \mathcal{F}, (\mathcal{F}_t)_{t \geq 0}, P)$,  an  exponential jump
diffusion processes, starting at $x_0$ given by:
\begin{equation} \nonumber
     Z_t=x_0 ~e^{(\mu-\frac{1}{2}\sigma^2)t+\sigma W_t}\prod_{i=1}^{N_t} (1+X_i),
\end{equation}
where $W= \{W_t\}$ is a standard Brownian motion, $N=  \{N_t\}$ is a homogeneous Poisson Process  with intensity
$\lambda >0$,
and the $X_i$ are independent random variables, also
independent of $W$ and $N$, with common probability distribution
$F_X$.
\end{definition}

\vspace{.1in}
\noindent
Let $\mathcal{A}^Z_{JD}$ be the class of admissible strategies (as in Definition \ref{stochasticAdmissiblePortfolio}) for the process $Z$.

\vspace{.1in}
\noindent
The following theorem makes use of notation introduced above.
\begin{theorem}\label{noArbitrageHedginStrategies}
Let $\mathcal{J}_{\mathcal{T}}^{\sigma, C}$ be the trajectory class
introduced in (\ref{eq:diffpoisson}) endowed with the Skorohod's topology.
Assume the random variables $X_i$  to be integrable with common probability distribution $F_X$ satisfying:\\
1) $P (X_i \subset C)=1$. \\
2) For any $a \in C$ and for all $\epsilon >0$, $F_X(a+\epsilon)-F_X(a-\epsilon)>0$.\\
Let $\Phi$ denote one of the portfolios considered in Corollary \ref{cor:simplePortfoliosAreLC}  or Theorem \ref{continuousRebalancingPortfolios}
defined through the NP-stopping times from Proposition \ref{pr:stop-tim-ex}. These are NP-portfolios which we require to be
NP-admissible as per Definition \ref{npAdmissiblePortfolio}. Then, such a $\Phi$ is not a NP-arbitrage portfolio.
\end{theorem}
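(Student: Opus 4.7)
The plan is to apply Theorem \ref{main-arbitrage-local-cont}(i) by pairing the given NP-portfolio $\Phi$ with an isomorphic stochastic portfolio $\Phi^z$ on the exponential jump-diffusion market $(Z, \mathcal{A}_{JD}^Z)$ of Definition \ref{jumpDiffusionClass}. The definition of isomorphic portfolios suggests setting $\Phi^z(t,\omega) \equiv \Phi(t, Z(\omega))$. Local V-continuity of $\Phi$ is already provided by Corollary \ref{cor:simplePortfoliosAreLC} (combined with Theorem \ref{simplePortfoliosAreLC}) or Theorem \ref{continuousRebalancingPortfolios}, since Proposition \ref{pr:stop-tim-ex} guarantees that the relevant stopping-time sequences are jointly strong locally continuous on $\mathcal{J}_{\mathcal{T}}^{\sigma, C}$ with respect to the Skorohod metric. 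So the substantive work splits into: verifying conditions $C_0$ and $C_1$; checking that $\Phi^z$ inherits self-financing, predictability, and admissibility from $\Phi$; and showing $\Phi^z$ is not a stochastic arbitrage.

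For $C_0$ I would write $z(t) \equiv \bigl((\mu-\sigma^2/2)/\sigma\bigr)t + W_t$, so that $\sigma z(t) = (\mu-\sigma^2/2)t+\sigma W_t$; the drift term does not affect the pathwise quadratic variation, hence $[z]_t^{\mathcal{T}} = t$ a.s. by the result from \cite{klein}, so $z \in \mathcal{Z}_{\mathcal{T}}([0,T])$ a.s. Since $N$ has only finitely many jumps on $[0,T]$ a.s., $N_t \in \mathcal{N}([0,T])$ a.s., and by hypothesis 1) each $X_i$ takes values in $C$ a.s., giving $Z(\omega) \in \mathcal{J}_{\mathcal{T}}^{\sigma,C}(x_0)$ a.s. For $C_1$, I fix $x \in \mathcal{J}_{\mathcal{T}}^{\sigma,C}$ with representation $x(t) = x_0 e^{\sigma z^{\ast}(t)} \prod_{i=1}^m (1+a_i 1_{[s_i,T]}(t))$ and $\epsilon > 0$, and consider the event
\begin{equation*}
E = \{N \text{ has exactly } m \text{ jumps on } [0,T], \text{ at times } \hat s_i \in (s_i-\delta, s_i+\delta)\} \cap \{|X_i - a_i| < \eta,\ i=1,\ldots,m\} \cap \{\|z - z^{\ast}\|_\infty < \rho\}.
\end{equation*}
The three constraints concern disjoint sources of randomness, hence $P(E)$ factorizes; each factor is strictly positive by, respectively, standard properties of Poisson processes, hypothesis 2), and the full support of Brownian motion with drift in the uniform norm. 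A direct Skorohod-distance estimate (using the time change that maps $\hat s_i$ to $s_i$) shows that by choosing $\delta, \eta, \rho$ small enough, $E \subseteq \{d_S(Z,x) < \epsilon\}$, giving $C_1$.

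Next, the isomorphism $\Phi^z(t,\omega) = \Phi(t, Z(\omega))$ is well-defined on the full-measure set $\{Z \in \mathcal{J}_{\mathcal{T}}^{\sigma,C}\}$. NP-predictability of $\Phi$ combined with adaptedness of $Z$ yields predictability of $\Phi^z$ in the usual sense; the NP-self-financing identity holds path-by-path via the Föllmer integral and therefore transfers almost surely; and NP-admissibility $V_\Phi(t,x) \geq -A$ transfers directly to $V_{\Phi^z}(t,\omega) \geq -A$ a.s., so $\Phi^z \in \mathcal{A}^Z_{JD}$. To see that $\Phi^z$ is not a stochastic arbitrage, note that $Z$ is a classical semimartingale: a standard Girsanov argument on the Brownian component together with a change of intensity/jump-law on the Poisson component (available since the $X_i$ are integrable) produces an equivalent local martingale measure for $Z$; any admissible strategy then yields a supermartingale under that measure, ruling out arbitrage.

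With $C_0$, $C_1$, local V-continuity of $\Phi$, the isomorphism, and absence of arbitrage for $\Phi^z$ all in hand, Theorem \ref{main-arbitrage-local-cont}(i) immediately gives that $\Phi$ is not a NP-arbitrage. The hardest part will be the small ball property $C_1$: it is the only step that requires genuinely new estimates, and it hinges on simultaneously controlling the Brownian small-ball probability, the timing of the Poisson jumps, and the jump-size distribution, and then translating these uniform-norm and discrete-time controls into a Skorohod-metric bound via an explicit time-change construction.
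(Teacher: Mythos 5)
Your proposal is correct and follows essentially the same route as the paper: verify $C_0$ and $C_1$, define the isomorphic portfolio $\Phi^z(t,\omega)=\Phi(t,Z(\omega))$, check that $\Phi^z$ is admissible and predictable, invoke the existence of an equivalent martingale measure for the jump-diffusion $Z$ to rule out stochastic arbitrage, and conclude via Theorem \ref{main-arbitrage-local-cont}(i). The only difference is that you sketch a direct proof of the small ball property $C_1$ (factorizing over the independent Brownian, Poisson-timing, and jump-size components and then passing to a Skorohod bound via a time change), whereas the paper simply cites Proposition 6 of \cite{AFO2011}, which encapsulates exactly that argument.
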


\begin{proof}
We will apply Theorem \ref{main-arbitrage-local-cont} to $Z$ and $\mathcal{J}_\mathcal{T}^{\sigma, C}$.
Note that $P(w \in \Omega : Z(w) \in \mathcal{J}_\mathcal{T}^{\sigma, C})=1$, this follows from our assumption 1). Therefore, hypothesis $C_0$ in Theorem \ref{main-arbitrage-local-cont} is fulfilled.
Our assumption 2) allows for the application of  Proposition 6 in \cite{AFO2011}, therefore, we conclude that the process $Z$ satisfies a small ball property with respect to
Skorohod's metric and trajectory space $\mathcal{J}_\mathcal{T}^{\sigma, C}$. It follows then that hypothesis $C_1$ in Theorem \ref{main-arbitrage-local-cont} is fulfilled as well.

\noindent
Let $\Phi$ be one of the portfolios described in the statement of the theorem and define
\begin{equation} \label{portfoliosAreTheSameOnPathSpace}
\Phi^z(t, w) = \Phi(t, Z(w)),
\end{equation}
notice that (\ref{portfoliosAreTheSameOnPathSpace}) is well defined in a set of full measure. We will argue below that
$\Phi^z \in \mathcal{A}_{JD}^Z$; notice that (\ref{portfoliosAreTheSameOnPathSpace}) shows $\Phi$ to be isomorphic to $\phi^z$.
Our hypothesis on the process $Z$ allow to apply
Proposition 9.9 from \cite{cont}, this result establishes the existence of a probability $\mathbb{Q}$ such that $e^{-r t}~Z_t$
is a martingale and so the probabilistic market
$(Z,\mathcal{A}_{JD}^Z)$ is arbitrage free. Therefore, elements of 
$ \mathcal{A}_{JD}^Z$ are not  arbitrage portfolios.
It then follows from Theorem \ref{main-arbitrage-local-cont}, statement i), that $\Phi$ is not a NP-arbitrage portfolio.

To complete the argument it remains to prove that $\Phi^z \in \mathcal{A}_{JD}^Z$, this is equivalent to proving that $\Phi^z$, as given by (\ref{portfoliosAreTheSameOnPathSpace}), is admissible as per Definition \ref{stochasticAdmissiblePortfolio}.
Notice that $\Phi^z$ is LCRL because $\Phi$ is LCRL.
Given that $\Phi$ is a NP-portfolio, assumed to be NP-admissible, it then follows that
to show admissibility of $\Phi^z$ it is enough to show that $\Phi^z_t$ is a
predictable process. We provide the proof of this fact only for the stock component $\phi^z_t$; because of the left continuity property,  $\phi^z_t$  will be predictable if it is adapted to the given filtration
$\mathcal{F} = \{\mathcal{F}_t\}$, we prove this next. 
Let $\tau$ denote one of the NP-stopping times considered in the statement of the theorem and define $\hat{\tau}(w)= \tau(Z(w))$, this maps is defined on a set of full measure
and it is easy to show that they are stopping times with respect to $\{\mathcal{F}_t\}$. In particular, the  simple portfolios have the form:
\begin{equation} \label{simpleStochasticPortfolio}
\phi^z(t,w) \equiv \phi^z_{0}(w)~ 1_{[0, \hat{\tau}_{1}(w)]}(t)+ \sum_{k \geq 1}~ \hat{\phi}_k(Z_{\hat{\tau}_k(w)}(w)) 1_{(\hat{\tau}_k(w), \hat{\tau}_{k+1}(w)]}(t),
\end{equation}
where $\hat{\phi}: \mathbb{R}_+ \rightarrow \mathbb{R}$ is continuous and hence $\hat{\phi}_k(Z_t(w))$ is $\mathcal{F}_t$-measurable. It follows that
(\ref{simpleStochasticPortfolio}) is $\mathcal{F}_t$-measurable. A similar argument also can be used for the stochastic portfolios
isomorphic to the continuing re-balancing portfolios from Theorem \ref{continuousRebalancingPortfolios}.
\qed
\end{proof}
A more general result can actually be proven as well.
\begin{corollary} \label{cor:noArbitrageNPMarket}
Assume the same hypothesis as in Theorem \ref{noArbitrageHedginStrategies} but now consider the following class of portfolios:
\begin{equation} \nonumber
\mathcal{A} \equiv \{\Phi: \Phi~\mbox{is a NP-portfolio, locally V-continuous and}~\exists
 ~\Phi^z \in \mathcal{A}^Z_{JD} ~\mbox{isomorphic to}~ \Phi\}.
\end{equation}
Then, the NP-market $(\mathcal{J}, \mathcal{A})$ is NP-arbitrage free.
\end{corollary}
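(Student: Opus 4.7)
The plan is to apply Theorem \ref{main-arbitrage-local-cont}, item i), to each $\Phi \in \mathcal{A}$, following essentially the same template used in the proof of Theorem \ref{noArbitrageHedginStrategies} but with the admissibility/isomorphism requirement now built into the definition of $\mathcal{A}$ rather than being verified on a case-by-case basis.

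First I would verify the two global hypotheses of Theorem \ref{main-arbitrage-local-cont} for the process $Z$ from Definition \ref{jumpDiffusionClass} and the trajectory space $\mathcal{J}_{\mathcal{T}}^{\sigma,C}$ endowed with the Skorohod topology. Condition $C_0$, i.e.\ $Z(\Omega) \subseteq \mathcal{J}_{\mathcal{T}}^{\sigma,C}$ a.s., follows from hypothesis 1) (the jumps $X_i$ take values in $C$) together with the fact that Brownian paths almost surely belong to $\mathcal{Z}_{\mathcal{T}}([0,T])$. Condition $C_1$, the small ball property, is obtained from hypothesis 2) by invoking Proposition 6 of \cite{AFO2011}, exactly as in the proof of Theorem \ref{noArbitrageHedginStrategies}.

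Next I would establish that the stochastic market $(Z, \mathcal{A}^Z_{JD})$ is arbitrage free. This is again the content of the argument already used in Theorem \ref{noArbitrageHedginStrategies}: Proposition 9.9 in \cite{cont} supplies a probability measure $\mathbb{Q}$ under which $e^{-rt}Z_t$ (with $r=0$) is a martingale, so no admissible strategy in $\mathcal{A}^Z_{JD}$ can be an arbitrage. Note that none of this step depends on the particular form of the portfolio $\Phi$.

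Finally, I would fix an arbitrary $\Phi \in \mathcal{A}$. By definition of $\mathcal{A}$, there exists $\Phi^z \in \mathcal{A}^Z_{JD}$ isomorphic to $\Phi$, and $\Phi$ is locally $V$-continuous. Since $(Z, \mathcal{A}^Z_{JD})$ is arbitrage free, $\Phi^z$ is not a (stochastic) arbitrage. Applying Theorem \ref{main-arbitrage-local-cont}~i) yields that $\Phi$ is not an NP-arbitrage, and since $\Phi \in \mathcal{A}$ was arbitrary, the NP-market $(\mathcal{J}_{\mathcal{T}}^{\sigma,C}, \mathcal{A})$ is NP-arbitrage free.

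There is really no hard step here, because the corollary is phrased so as to assume exactly the two ingredients (local $V$-continuity, and existence of an isomorphic admissible stochastic counterpart) that the machinery of Theorem \ref{main-arbitrage-local-cont} requires. In Theorem \ref{noArbitrageHedginStrategies} the technical work consisted precisely in \emph{producing} these ingredients for the concrete portfolios of Corollary \ref{cor:simplePortfoliosAreLC} and Theorem \ref{continuousRebalancingPortfolios} (local $V$-continuity was extracted from joint strong local continuity of the stopping times, and admissibility of $\Phi^z$ was checked via predictability of the stochastic version (\ref{simpleStochasticPortfolio})). Here, since both conditions are postulated as part of membership in $\mathcal{A}$, the proof collapses to a direct invocation of the transfer theorem; the only point worth mentioning is that neither $C_0$, $C_1$ nor absence of arbitrage in $(Z,\mathcal{A}^Z_{JD})$ depends on $\Phi$, so the same underlying stochastic model serves all portfolios in $\mathcal{A}$ simultaneously.
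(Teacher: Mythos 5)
Your proposal is correct and is essentially the paper's own argument: the authors state that the proof is exactly the same as that of Theorem \ref{noArbitrageHedginStrategies}, with the only difference being that local $V$-continuity and the existence of an isomorphic admissible $\Phi^z$ are now postulated in the definition of $\mathcal{A}$ rather than verified for specific portfolios. Your observation that $C_0$, $C_1$ and the no-arbitrage property of $(Z,\mathcal{A}^Z_{JD})$ are independent of the particular $\Phi$ matches the paper's intent precisely.
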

The proof of Corollary \ref{cor:noArbitrageNPMarket} is exactly the same as the one of Theorem \ref{noArbitrageHedginStrategies}; the point of the specialized
Theorem \ref{noArbitrageHedginStrategies} is to  explicitly establish membership to $\mathcal{A}$ for the portfolios considered in our paper.
Theorem 7 from \cite{AFO2011} provides further examples of portfolios belonging to $\mathcal{A}$.

\subsection{Implications to Stochastic Frameworks}  \label{implicationsForStochasticFrameworks}

Theorem \ref{main-arbitrage-local-cont}, item $ii)$, in conjunction with Corollary \ref{cor:noArbitrageNPMarket}, can be used
to prove that certain stochastic models are arbitrage free. A main point to emphasize is that many of these stochastic models are not semi-martingales, moreover, the NP-portfolios defined through NP-stoping-times considered in the present paper define isomorphic stochastic portfolios in such models. Below, we provide the main steps required to obtain these type of results and refer to  \cite{AFO2011} for more details.

\begin{example}[Jump-diffusion related models] \label{ex:mixed-jump-diff} Consider the following  stochastic process, defined on a filtered space $(\Omega, \{\mathcal{F}_t\}, P)$,
\begin{equation} \nonumber
Y_t=e^{(\mu-\sigma^2/2)t + \sigma Z^G_t}\prod_{i=1}^{N^R_t}(1+Y_i),
\end{equation}
where $Z^G$ is a continuous process satisfying $\langle Z^G\rangle_t=t$. Assume also
that $Z^G$ satisfies a small ball property on $\mathcal{Z}_{\mathcal{T}}([0, T])$
with respect to the uniform norm. Examples of such processes $Z^G$ are the processes $Z^F$, $Z^R$ and $Z^w$ introduced
in Section 5 of \cite{AFO2011}. The process $N^R$ is a renewal process and the random variables $Y_i$ are independent and also independent of $Z^G$ and $N^R$  with
common distribution $F_Y$.

\noindent
Consider the arbitrage free NP-market
$(\mathcal{J}_\mathcal{T}^{\sigma, C}, \mathcal{A})$ introduced  in Corollary \ref{cor:noArbitrageNPMarket} and define the following set of portfolios  (defined on $(\Omega, \{\mathcal{F}_t\}, P)$),
\begin{equation}  \nonumber
\mathcal{A}^Y \equiv \{\Phi^y:~\mbox{admissible and}~\exists~\Phi \in \mathcal{A}~\mbox{isomorphic to} ~\Phi^y\}.
\end{equation}
We argue next that, under appropriate conditions, the stochastic market $(Y, \mathcal{A}^Y )$ is arbitrage free.
Assume the hypothesis in Corollary \ref{cor:noArbitrageNPMarket}
are satisfied hence $(\mathcal{J}_\mathcal{T}^{\sigma, C}, \mathcal{A})$ is NP-arbitrage free. Furthermore, under the assumptions:\\
1) $P (Y_i \subset C)=1$, \\
2) For any $a \in C$ and for all $\epsilon >0$, $F_Y(a+\epsilon)-F_Y(a-\epsilon)>0$,\\
one can use the same arguments as in the proof of Theorem \ref{noArbitrageHedginStrategies} to show that
hypothesis $C_0$ and $C_1$ in Theorem \ref{main-arbitrage-local-cont} hold; therefore, using
ii) from that latter theorem one concludes that  $(Y, \mathcal{A}^Y)$ is arbitrage free.
\end{example}

\section{Variable Volatility Models}  \label{variableVolatilityClass}

Analogously to the developments in Section  \ref{nPJumpFiffusionClass},
the present section defines a class of trajectories $J_{\mathcal{T}}^{\Sigma}(x_0)$. 
Notice that the refining sequence of partitions $\mathcal{T}$ has been introduced in Section \ref{nPJumpFiffusionClass}.
The set $J_{\mathcal{T}}^{\Sigma}(x_0)$ exhibits different volatilities for different trajectories, that is, the volatility curve/function is trajectory-dependent. A new metric is introduced that allows to prove 
that a large class of practical NP-portfolios are arbitrage free.
Moreover, we draw some no-arbitrage implications for modified stochastic Heston models which include non-semimartingale processes. As in the previous section, the notion of integral that will be used throughout this Section is the Follmer's integral.

Let $\Sigma \subset C[0,T]$ be a class of functions of finite variation representing the possible volatility trajectories.  Also, let  $NQV[0,T]\subset C[0,T]$ be the class of all continuous functions $d:[0,T] \to \mathbb{R}$ that have null quadratic variation
on $[0,T]$ and satisfy $d_0=d(0)=0$.\\
\\
Define
\begin{equation} \nonumber 
J_{\mathcal{T}}^{\Sigma}(x_0)= \left\{x \in C[0,T]: x(t)=x_0e^{d_t+\int_0^t \sigma(s) dz(s)}\;, \sigma \in  \Sigma, z \in \mathcal{Z}_{\mathcal{T}}([0, T]), d \in NQV[0,T] \right\}
\end{equation}
The integral that appears in the previous expression exists as $\sigma$ has finite variation. For a more detailed discussion on the existence of these integrals 
see \cite{Schied}.\\
\\
Consider now a metric on $J_{\mathcal{T}}^{\Sigma}(x_0)$ given by
\begin{equation} \nonumber
d_{QV}(x,y)=\left\|x-y \right\|+\left\|\frac{\partial}{ \partial t} \left\langle x\right\rangle_t -
                                  \frac{\partial}{ \partial t} \left\langle y\right\rangle_t \right\|,
\end{equation}
where $\left\| \cdot\right\|$ stands for the supremum norm on $C[0,T]$.

\begin{remark}
Using Ito-Follmer's formula (see \cite{follmer}) we can check that if
$x_t=x_0e^{d^x_t+\int_0^t \sigma_x(s) dz_x(s)}$ and $y_t=x_0e^{d^y_t+\int_0^t \sigma_y(s) dz_y(s)}$
are two trajectories  in $J_{\mathcal{T}}^{\Sigma}(x_0)$, then
\[
d_{QV}(x,y)=\left\|x-y \right\|+\left\|x^2\sigma_x^2 - y^2\sigma_y^2\right\|.
\]
This means that $x$ and $y$ will be close in the metric $d_{QV}$ if they are close
in the uniform metric and their volatilities are also close in the uniform metric.
\end{remark}

The following proposition gives sufficient conditions in order to establish the small
balls property of some stochastic volatility models on $J_{\mathcal{T}}^{\Sigma}(x_0)$
with respect to the metric $d_{QV}$. 

\begin{proposition}\label{prop:brownian-dep}
Let $\Sigma \subset C[0,T]$ be a set of strictly positive functions of finite variation.
Let $Z$ be a stochastic volatility model on $(\Omega, \mathcal{F}, (\mathcal{F}_t)_{t \geq 0}, P)$ given by
$Z_t=x_0e^{h_t+\int_0^t \sigma_s dW_s}$ where $W$, $h$ and $\sigma$ are stochastic processes.
The stochastic process $h$ is also assumed to have null quadratic variation and $h_0=0$.
Assume that
$P \left( \sigma(\omega) \in \Sigma\right) =1$, and $\sigma$ satisfies a small balls property on $\Sigma$ with respect to the uniform norm. Assume also that $P \left( W(\omega) \in \mathcal{Z}_{\mathcal{T}}([0, T])\right) = 1$, and there exists $0 < \alpha \leq 1$
such that $W=\alpha B+Y$ where $B$ is a Brownian motion independent of $Y$, $\sigma$ and $h$. Then:\\
\\
i) $P \left( Z(\cdot,\omega) \in J_{\mathcal{T}}^{\Sigma}(x_0)\right) = 1$.\\
ii) For all $y \in J_{\mathcal{T}}^{\Sigma}(x_0)$ and for all $\epsilon>0$,
$P\left(d_{QV}\left(Z(\omega),y \right)<\epsilon \right)>0$.
\end{proposition}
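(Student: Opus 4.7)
The plan is to dispatch (i) by inspection and invest the work into the small ball statement (ii), which I will handle by isolating the Brownian piece of $W$ and applying a conditional Gaussian small ball argument.

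For (i), observe that $Z_t = x_0 \exp\!\bigl(h_t + \int_0^t \sigma_s\,dW_s\bigr)$ already has the functional form defining $J_{\mathcal{T}}^{\Sigma}(x_0)$, with $d = h$, $z = W$, and volatility path $\sigma$. The hypotheses $P(\sigma\in \Sigma)=P(W\in \mathcal{Z}_{\mathcal{T}}([0,T]))=1$, together with $h\in NQV[0,T]$ and $h_0=0$, identify a full-measure set on which $Z\in J_{\mathcal{T}}^{\Sigma}(x_0)$.

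For (ii), I decompose $W=\alpha B + Y$ and write
\[
\log Z_t - \log y_t = M_t - G_t, \qquad M_t := \alpha\int_0^t \sigma_s\,dB_s,
\]
\[
G_t := d^y_t - h_t - \int_0^t \sigma_s\,dY_s + \int_0^t \sigma_y(s)\,dz_y(s).
\]
The integrals in $G$ exist pathwise as Riemann--Stieltjes integrals because $\sigma,\sigma_y$ have finite variation while $Y,z_y$ are continuous, so $G$ is continuous with $G_0=0$. First I would reduce $d_{QV}(Z,y)<\epsilon$ to a joint bound of the form $\|\log Z - \log y\|_\infty < \eta$ and $\|\sigma-\sigma_y\|_\infty < \delta$: by the remark following the definition of $d_{QV}$, one has $d_{QV}(Z,y)=\|Z-y\|_\infty + \|Z^2\sigma^2 - y^2\sigma_y^2\|_\infty$, and since $y$ is bounded and $\sigma_y$ is bounded (both continuous on $[0,T]$), small $\eta,\delta$ force both summands to be smaller than $\epsilon$ via the uniform continuity of the exponential and the factorization $Z^2\sigma^2 - y^2\sigma_y^2 = (Z^2-y^2)\sigma^2 + y^2(\sigma^2-\sigma_y^2)$.

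Next I would condition on the $\sigma$-algebra $\mathcal{G}$ generated by $(\sigma, h, Y)$. Since $B$ is independent of $\mathcal{G}$, the conditional law of $M$ given $\mathcal{G}$ is that of a continuous centered Gaussian martingale with quadratic variation $\alpha^2\int_0^\cdot \sigma_s^2\,ds$. On the event $\{\|\sigma-\sigma_y\|_\infty < \delta\}$, provided $\delta < \min_{[0,T]}\sigma_y/2$, the path $\sigma$ is continuous and strictly positive, so the time change $\tau(t)=\alpha^2\int_0^t \sigma_s^2\,ds$ is a continuous strictly increasing bijection $[0,T]\to[0,\tau(T)]$, and $M$ can be represented as $\tilde{B}\circ\tau$ for a conditionally-Brownian $\tilde{B}$. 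The topological support of $M$ under the conditional law is then all of $C_0[0,T]$, and because the random target $G$ is continuous with $G_0=0$, the standard Brownian small ball property (together with Cameron--Martin) gives
\[
P\bigl(\|M - G\|_\infty < \eta \,\bigm|\, \mathcal{G}\bigr) > 0 \quad \text{a.s.\ on } \{\|\sigma-\sigma_y\|_\infty < \delta\}.
\]
Integrating against $\mathbf{1}_{\{\|\sigma-\sigma_y\|_\infty < \delta\}}$, which has positive probability by the hypothesized small ball property of $\sigma$, yields $P(\|\log Z - \log y\|_\infty < \eta,\ \|\sigma-\sigma_y\|_\infty < \delta) > 0$, which in turn forces $P(d_{QV}(Z,y)<\epsilon)>0$.

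The main obstacle is the conditional Gaussian small ball step: I must ensure that the (random) centering $G$ is a.s.\ an admissible target, i.e.\ continuous with $G_0=0$, and that the support identification for $M\mid\mathcal{G}$ holds uniformly on the event $\{\|\sigma-\sigma_y\|_\infty<\delta\}$ so that the conditional probability is strictly positive there, not merely positive in expectation after some null-set gymnastics. The remaining reductions (from $d_{QV}$ control to $(\eta,\delta)$ control, and the pathwise meaningfulness of $\int_0^t\sigma\,dY$) are routine given the finite-variation assumption on $\Sigma$.
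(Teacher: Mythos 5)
Your argument is correct and shares the paper's overall architecture: part (i) by inspection, and part (ii) by reducing $\{d_{QV}(Z,y)<\epsilon\}$ to the joint event that $Z$ is uniformly close to $y$ and that the volatility paths are uniformly close, then exploiting the independence of $B$ from $(\sigma,h,Y)$ for the first requirement and the small ball property of $\sigma$ for the second. Where you diverge is in the key lemma: the paper gets positive probability for the uniform-closeness event by citing Theorem 3.1 of \cite{pak} (conditional full support of stochastic integrals) and then asserts that the conditional probability of the volatility event given that one is positive; you instead re-derive the conditional full support by hand, conditioning on $\mathcal{G}=\sigma(\sigma,h,Y)$, identifying the conditional law of $\alpha\int_0^{\cdot}\sigma_s\,dB_s$ as a time-changed Brownian motion via $\tau(t)=\alpha^2\int_0^t\sigma_s^2\,ds$, and concluding positivity of the conditional small-ball probability around the $\mathcal{G}$-measurable target $G$ on the event $\{\|\sigma-\sigma_y\|_\infty<\delta\}$. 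Your version buys two things: it is self-contained, and it is more careful on the one point the paper glosses over, namely that the two events are not independent, so what is really needed is a \emph{joint} small-ball statement for $(Z,\sigma)$; your explicit conditioning delivers exactly that, whereas the paper's step ``$P(B\mid A)>0$ by the small ball property of $\sigma$'' leaves the dependence between the events implicit. The cost is that you must justify the pathwise splitting $\int\sigma\,dW=\alpha\int\sigma\,dB+\int\sigma\,dY$ and the a.s.\ validity of the conditional support identification on the relevant event; both follow routinely from the independence hypothesis and the finite variation of $\sigma$ (integration by parts), as you indicate, so I see no gap.
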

\begin{proof}
The proof of i) is immediate from the construction of $Z$. To prove statement ii), notice that if $y_t=x_0e^{d^y_t+\int_0^t \sigma_y(s) dz_y(s)}$ then

\[
\left\{\omega: d_{QV}\left(Z(\cdot,\omega),y \right)<\epsilon \right\} \supset A \cap B
\]
where A and B are defined as
\[
A=\left\{\omega: ||Z(\cdot,\omega)-y||< \frac{\epsilon}{2} \right\}
\]
and
\[
B=\left\{ \omega: \left\|\frac{\partial}{ \partial t} \left\langle Z(\cdot,\omega)\right\rangle_t -\frac{\partial}{ \partial t} \left\langle y\right\rangle_t \right\| < \frac{\epsilon}{2} \right\}
=\left\{ \omega: \| Z^2(\cdot,\omega) \sigma^2(\omega)-y^2\sigma_y^2\| < 
\frac{\epsilon}{2} \right\}
\]
The probability $P(A)>0$ as consequence of Theorem 3.1 in \cite{pak} (see also  Remark 4.3 in \cite{pak}). Here we used the independence between $Y$ and $B$.
The conditional probability $P(B|A)$ is also positive as consequence of the small balls property of
$\sigma$ on $\Sigma$ with respect to the uniform norm. Then, from $P(A\cap B)=P(B|A)P(A)$, we conclude that $P(A \cap B)>0$, therefore $P \left(\omega: d_{QV}\left(Z(\cdot,\omega),y \right)<\epsilon \right) >0$, for all $y \in J_{\mathcal{T}}^{\Sigma}(x_0)$ and for all $\epsilon>0$.
\qed
\end{proof}

\begin{remark} Similar results using a general integrator $W$ could be obtained by assuming independence between $\sigma$ and $W$, see \cite{pak} for some related results. However, from the modeling point of view, it is not desirable that $\sigma$ and $W$ are independent.
\end{remark}

\subsection{Locally V-Continuous Portfolios on $J_{\mathcal{T}}^{\Sigma}(x_0)$}

The following theorem establishes that a large class of portfolios acting 
on $J_{\mathcal{T}}^{\Sigma}(x_0)$ are locally $V$-continuous.

\begin{theorem}\label{loc-cont-portfolio-under-QV}
Let $0=\tau_0 \leq \tau_1\leq \tau_2\leq \cdots \leq T$  be a jointly strong locally continuous sequence
of NP-stopping times in $J_{\mathcal{T}}^{\Sigma}(x_0)$ with respect to the  metric $d_{QV}$.
Let $\phi_0(\cdot,\cdot), \phi_1(\cdot,\cdot),...$
be functions continuous on $[0,T]\times \mathbb{R}$ and differentiable on $(0,T)\times \mathbb{R}$. Consider
the portfolio strategy given by $\Phi_t=(\psi_t, \phi_t)$ where the amount invested in the stock
$\phi_t$ is such that
\[
\phi(t,x)=1_{[\tau_0, \tau_{1}]}(t)\phi_0(t,x(t^{-}))+\sum_{i=1}^{M(x)-1} 1_{(\tau_i, \tau_{i+1}]}(t)~\phi_i(t,x(t^{-})),
\]
and $\psi_t$ is given as described in Remark \ref{selfFinancingBankInvestment}.
Then, the portfolio $\Phi$ is NP-predictable, NP-self-financing and locally V-continuous on $J_{\mathcal{T}}^{\Sigma}(x_0)$ relative to the
metric $d_{QV}.$
\end{theorem}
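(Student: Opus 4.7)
The plan is to follow the template of Theorem \ref{continuousRebalancingPortfolios} but with two structural simplifications and one new twist. The simplifications are that trajectories in $J_{\mathcal{T}}^{\Sigma}(x_0)$ are continuous, so there is no jump-sum term $S_\Phi^i$ to control, and the joint strong local continuity of $\{\tau_n\}$ is assumed directly with respect to $d_{QV}$ rather than Skorohod. The new twist is that the quadratic variation term in the Ito--F\"ollmer decomposition now depends non-trivially on the trajectory, and closeness of this term is precisely what the metric $d_{QV}$ is engineered to give us.

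The NP-predictability and NP-self-financing properties follow by exactly the arguments used in Theorem \ref{simplePortfoliosAreLC} and the corresponding part of Theorem \ref{continuousRebalancingPortfolios}; these are purely algebraic/structural statements independent of the metric, and nothing about the continuous, variable-volatility setting interferes with them.

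For local V-continuity, fix $x^{\ast}\in J_{\mathcal{T}}^{\Sigma}(x_0)$ and let $U_{x^{\ast}}$ be the common open neighborhood provided by joint strong local continuity of $\{\tau_n\}$ with respect to $d_{QV}$. For each $i$ define $U^i_\Phi(t,y)=\int_{y_0}^y \phi_i(t,\xi)\,d\xi$ and, by the Ito--F\"ollmer formula applied to the continuous trajectory $x$ between $\tau_i(x)$ and $\tau_{i+1}(x)$,
\[
u^i_\Phi(x)=\int_{\tau_i(x)}^{\tau_{i+1}(x)}\phi_i(s,x(s))\,dx(s)
=U^i_\Phi(\tau_{i+1}(x),x(\tau_{i+1}(x)))-U^i_\Phi(\tau_i(x),x(\tau_i(x)))-I^i_\Phi(x),
\]
where
\[
I^i_\Phi(x)=\int_{\tau_i(x)}^{\tau_{i+1}(x)}\!\frac{\partial U^i_\Phi}{\partial t}(s,x(s))\,ds
+\frac{1}{2}\int_{\tau_i(x)}^{\tau_{i+1}(x)}\!\frac{\partial^2 U^i_\Phi}{\partial x^2}(s,x(s))\,\frac{\partial\langle x\rangle_s}{\partial s}\,ds.
\]
Then $V_\Phi(T,x)=V_0+\sum_{i=0}^{M(x)-1}u^i_\Phi(x)$.

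For a sequence $U_{x^{\ast}}\ni x_n\to x^{\ast}$ in $d_{QV}$, I would verify term-by-term convergence. Joint strong local continuity gives $\tau_i(x_n)\to\tau_i(x^{\ast})$, $x_n(\tau_i(x_n))\to x^{\ast}(\tau_i(x^{\ast}))$ and $M(x_n)=M(x^{\ast})$ eventually; combined with continuity of $U^i_\Phi$ this handles the boundary terms. For $I^i_\Phi(x_n)\to I^i_\Phi(x^{\ast})$, the $d_{QV}$-convergence gives both $\|x_n-x^{\ast}\|_\infty\to 0$ and $\|\partial_t\langle x_n\rangle-\partial_t\langle x^{\ast}\rangle\|_\infty\to 0$, so the integrands converge uniformly on $[0,T]$ while the limits of integration converge; uniform boundedness of $\partial_t\langle x_n\rangle$ (immediate from uniform $d_{QV}$-closeness on a bounded neighborhood of $x^{\ast}$) lets one pass to the limit by dominated convergence.

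The main obstacle, and the reason the metric $d_{QV}$ was introduced at all, is the second summand of $I^i_\Phi$: unlike in Theorem \ref{continuousRebalancingPortfolios}, where $d\langle x\rangle_s=\sigma^2 x^2(s)\,ds$ is automatically controlled by uniform closeness of $x_n$ to $x^{\ast}$ (because $\sigma$ is a fixed constant), here the volatility $\sigma_x$ depends on $x$ and two trajectories can be arbitrarily close in the uniform norm while their quadratic variations remain far apart. The definition of $d_{QV}$ is tailored precisely to close this gap, so the argument goes through; once this convergence is in hand, summing over $i=0,\dots,M(x^{\ast})-1$ yields $V_\Phi(T,x_n)\to V_\Phi(T,x^{\ast})$ and establishes local V-continuity on $V_{x^{\ast}}\equiv U_{x^{\ast}}$ (intersected if necessary with the neighborhoods on which each $\phi_i$ is well-controlled, exactly as at the end of the proof of Theorem \ref{simplePortfoliosAreLC}).
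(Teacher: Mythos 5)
Your proposal is correct and follows essentially the same route as the paper: the same Ito--F\"ollmer decomposition into boundary terms plus the time-derivative and quadratic-variation integrals, with convergence of the latter supplied exactly by the uniform control on $\partial_t\langle x\rangle$ built into $d_{QV}$, and the predictability/self-financing claims deferred to the argument of Theorem \ref{simplePortfoliosAreLC}. Your observation that the absence of a jump-sum term and the trajectory-dependent quadratic variation are the two structural differences from Theorem \ref{continuousRebalancingPortfolios} matches the paper's treatment precisely.
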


\begin{proof}
Using similar arguments to the ones used in Theorem \ref{simplePortfoliosAreLC} one can prove that
a portfolio $\Phi$ as above is NP-predictable and NP-self-financing. Next we will prove
that it is also locally V-continuous.\\

For $i=0,1,...$ define the function $U^i_{\phi}: \mathbb{R}^2 \rightarrow \mathbb{R}$ as:
\begin{equation} \nonumber
U^i_{\phi}(t,y)=\int_{y_0}^y \phi_i(t, \xi) d \xi
\end{equation}
Let
\begin{equation} \nonumber
u_{\Phi}(x)=\sum_{i=0}^{M(x)-1} u_{\Phi}^i(x)
\end{equation}
where the functionals $u_{\Phi}^i: J_{\mathcal{T}}^{\Sigma}(x_0) \rightarrow \mathbb{R}$ are defined as:
\begin{eqnarray}\label{eq:stoch-vol-0}
u_{\Phi}^i(x)&=&U_{\Phi}^i(\tau_{i+1}(x),x(\tau_{i+1}(x)))-U_{\Phi}^i(\tau_i(x),x(\tau_i(x)))\\
           &&-\int_{\tau_i(x)}^{\tau_{i+1}(x)} \frac{\partial U_{\Phi}^i}{\partial t}(s,x(s-))ds
             -\frac{1}{2}\int_{\tau_i(x)}^{\tau_{i+1}(x)} \frac{\partial^2 U_{\Phi}^i}{\partial x^2}(s,x(s-))d \langle x \rangle_s^{\mathcal{T}}. \nonumber
\end{eqnarray}

From It\^o-F\"ollmer formula

\begin{equation} \nonumber
u_{\Phi}^i(x)=\int_{\tau_i(x)}^{\tau_{i+1}(x)} \frac{\partial U_{\Phi}^i}{\partial x}(s,x(s-)) dx(s)=\int_{\tau_i(x)}^{\tau_{i+1}(x)} \phi_i(s,x(s-)) dx(s).
\end{equation}
Then
\begin{equation} \label{eq:stoch-vol-sum}
u_{\Phi}(x)=\sum_{i=0}^{M(x)-1} u_{\Phi}^i(x)= \int_{0}^{T} \phi(s,x(s-)) dx(s).
\end{equation}

Now fix $x^* \in J_{\mathcal{T}}^{\Sigma}(x_0)$.
As $0=\tau_0 \leq \tau_1\leq \tau_2\leq \cdots \leq T$  is a jointly strong locally continuous sequence
of NP-stopping times, there exists an open $U_{x^*}\subset J_{\mathcal{T}}^{\Sigma}(x_0)$
such that $x^* \in \overline{U}_x$ and whenever $x_n \to x^*$ in $U_{x^*}$, i), ii) and iii) of Definition
\ref{joint-strong-LC} hold.\\
\\
Using that $U(\cdot,\cdot) \in C^{1,2}([0,T] \times \mathbb{R})$, the continuity of $x_n$ and $x^*$, and also that
$\tau_i(x_n) \to \tau_i(x^*)$ for all $i$, we conclude that:\\
\begin{equation}\label{eq:stoch-vol-1}
U_{\Phi}^i(\tau_{i+1}(x_n),x_n(\tau_{i+1}(x_n)))-U_{\Phi}^i(\tau_i(x_n),x_n(\tau_i(x_n))) \to
\end{equation}
\begin{equation} \nonumber
 U_{\Phi}^i(\tau_{i+1}(x^*),x^*(\tau_{i+1}(x^*)))-U_{\Phi}^i(\tau_i(x^*),x^*(\tau_i(x^*)))
\end{equation}
and
\begin{equation}\label{eq:stoch-vol-2}
\int_{\tau_i(x_n)}^{\tau_{i+1}(x_n)} \frac{\partial U_{\Phi}^i}{\partial t}(s,x_n(s-))ds \to \int_{\tau_i(x^*)}^{\tau_{i+1}(x^*)} \frac{\partial U_{\Phi}^i}{\partial t}(s,x^*(s-))ds.
\end{equation}

\noindent On the other hand, we have that
$\displaystyle{d \langle x_n \rangle_s^{\mathcal{T}} =\frac{d \langle x_n \rangle_s^{\mathcal{T}}}{ds} ds}$ and
$\displaystyle{d \langle x^* \rangle_s^{\mathcal{T}} =\frac{d \langle x^* \rangle_s^{\mathcal{T}}}{ds} ds}$.

\noindent The convergence of $x_n$ to $x^*$ in the metric $d_{QV}$ implies that
$\displaystyle{\frac{d \langle x_n \rangle_s^{\mathcal{T}}}{ds} \to \frac{d \langle x^* \rangle_s^{\mathcal{T}}}{ds}}$
uniformly on $[0,T]$. This, together with the fact that $U(\cdot,\cdot) \in C^{1,2}([0,T] \times \mathbb{R})$ and the
convergence of $\tau_i(x_n)$ to $\tau_i(x^*)$ for all $i$, imply that\\

$\displaystyle{\int_{\tau_i(x_n)}^{\tau_{i+1}(x_n)}
\frac{\partial^2 U_{\Phi}^i}{\partial x^2}(s,x_n(s-))\frac{d \langle x_n \rangle_s^{\mathcal{T}}}{ds} ds \to
\int_{\tau_i(x^*)}^{\tau_{i+1}(x^*)}
\frac{\partial^2 U_{\Phi}^i}{\partial x^2}(s,x^*(s-))\frac{d \langle x^* \rangle_s^{\mathcal{T}}}{ds} ds}$

\noindent or equivalently

\begin{equation}\label{eq:stoch-vol-3}
\int_{\tau_i(x_n)}^{\tau_{i+1}(x_n)} \frac{\partial^2 U_{\Phi}^i}{\partial x^2}(s,x_n(s-))d \langle x_n \rangle_s^{\mathcal{T}} \to
\int_{\tau_i(x^*)}^{\tau_{i+1}(x^*)} \frac{\partial^2 U_{\Phi}^i}{\partial x^2}(s,x^*(s-))d \langle x^* \rangle_s^{\mathcal{T}}.
\end{equation}

\noindent Combining expressions (\ref{eq:stoch-vol-1}), (\ref{eq:stoch-vol-2}) and (\ref{eq:stoch-vol-3}) with
(\ref{eq:stoch-vol-0}) and (\ref{eq:stoch-vol-sum}) we get

\[
u_{\Phi}(x_n) \to u_{\Phi}(x^*).
\]

\noindent This implies that

\[
V_{\Phi}(T,x_n)=V_0+ \int_{0}^{T} \phi(s,x_n(s)) dx_n(s) \to V_0+ \int_{0}^{T} \phi(s,x^*(s)) dx^*(s)=V_{\Phi}(T,x^*),
\]
\noindent 
so $\Phi$ is locally V-continuous on $J_{\mathcal{T}}^{\Sigma}(x_0)$ relative to the
metric $d_{QV}$
\qed
\end{proof}

The following proposition provides examples of sequences of NP-stopping times that are jointly strong locally continuous
on $J_{\mathcal{T}}^{\Sigma}(x_0)$ and, hence,  examples of NP-stopping times
satisfying the hypothesis required in Theorem \ref{loc-cont-portfolio-under-QV}.

\begin{proposition}\label{pr:stoppingtimes-stoch-vol}
Let $\{K_i\}_{i=1,2,\ldots}$ be an increasing sequence of real numbers with $K_i \to \infty$. The following sequences of NP-stopping times are jointly strong locally continuous in $J_{\mathcal{T}}^{\Sigma}$ with respect to the metric $d_{QV}$:
\begin{itemize}
\item 1) $\tau_i(x)=\min(\frac{i~T}{n}, T)$, for $i=0,1, \ldots$
\item 2) $\displaystyle{\tau_i(x)=\min \left(\inf \{t :  x_t \geq K_i\}, T \right)}$, for $i=1,2, \ldots$
\end{itemize}
\end{proposition}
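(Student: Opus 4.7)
The plan is to treat each of the two families separately, with part 1) essentially trivial and part 2) carrying the real content.

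For 1), the stopping times $\tau_i(x)=\min(iT/n,T)$ are deterministic constants, so condition i) of Definition \ref{joint-strong-LC} holds automatically and $M(x)$ is constant in $x$, giving iii). For ii), since $d_{QV}$ dominates the uniform norm on $C[0,T]$, any convergence in $d_{QV}$ implies pointwise convergence, so $x_n(\tau_i)\to x^{*}(\tau_i)$ for every $i$. One may take $U_{x^{*}}=J_{\mathcal{T}}^{\Sigma}(x_0)$.

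For 2), first note that each $\tau_i$ is an NP-stopping time by items \ref{hittingTime} and \ref{minimum} of Proposition \ref{stopTimesInCanonicalFiltrationAreNPStpTimes}. Fix $x^{*}\in J_{\mathcal{T}}^{\Sigma}(x_0)$; since $x^{*}$ is continuous on $[0,T]$ and $K_i\nearrow\infty$, there is a smallest $N=M(x^{*})$ with $\tau_N(x^{*})=T$, equivalently $\max_{[0,T]}x^{*}<K_N$, and for each $i<N$ we have $\tau_i(x^{*})<T$ and $x^{*}(\tau_i(x^{*}))=K_i$. Define $U_{x^{*}}\subset J_{\mathcal{T}}^{\Sigma}(x_0)$ to be the set of $x$ satisfying (a) $\max_{[0,T]}x<K_N$ and (b) for every $i<N$ and every $\delta>0$ there exists $t\in(\tau_i(x),\tau_i(x)+\delta)$ with $x(t)>K_i$ (transversal crossing). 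Openness in $d_{QV}$ is routine: (a) is stable under uniform perturbations, and (b) is preserved under sufficiently small uniform perturbations of a transversal crosser.

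The non-trivial step is $x^{*}\in\overline{U}_{x^{*}}$, which I would establish via the approximating family $y_\epsilon(t):=x^{*}(t)e^{\epsilon t}$ for $\epsilon>0$. Writing $x^{*}(t)=x_0 e^{d^{*}(t)+\int_0^t\sigma^{*}(s)\,dz^{*}(s)}$, one has $y_\epsilon(t)=x_0 e^{(d^{*}(t)+\epsilon t)+\int_0^t\sigma^{*}(s)\,dz^{*}(s)}$, and since $t\mapsto\epsilon t$ is of bounded variation with $\epsilon\cdot 0=0$ it lies in $NQV[0,T]$, so $y_\epsilon\in J_{\mathcal{T}}^{\Sigma}(x_0)$. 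Because $y_\epsilon$ and $x^{*}$ share the same $\sigma^{*}$ and $z^{*}$, one obtains $\partial_t\langle y_\epsilon\rangle_t-\partial_t\langle x^{*}\rangle_t=(\sigma^{*})^2(t)\bigl[y_\epsilon^2(t)-(x^{*})^2(t)\bigr]$, and together with the uniform convergence $y_\epsilon\to x^{*}$ this yields $d_{QV}(y_\epsilon,x^{*})\to 0$ as $\epsilon\to 0^{+}$. For $i<N$, $y_\epsilon(\tau_i(x^{*}))=K_i e^{\epsilon\tau_i(x^{*})}>K_i$, forcing $\tau_i(y_\epsilon)<\tau_i(x^{*})$; a local analysis at $\tau_i(y_\epsilon)$ together with the strict inequality $y_\epsilon>x^{*}$ on $(0,T]$ then confirms $y_\epsilon\in U_{x^{*}}$ for all sufficiently small $\epsilon>0$. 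Granted this, conditions i)--iii) for a sequence $x_n\to x^{*}$ in $U_{x^{*}}$ follow: uniform convergence together with transversal crossing of each $x_n$ gives $\tau_i(x_n)\to\tau_i(x^{*})$ and then $x_n(\tau_i(x_n))=K_i=x^{*}(\tau_i(x^{*}))$ for $i<N$, while for $i\geq N$ both equal $T$ and $x_n(T)\to x^{*}(T)$; condition (a) gives $M(x_n)=N$ for $n$ large.

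The principal obstacle is verifying that $y_\epsilon$ satisfies the transversal crossing property (b) at each $K_i$: the bound $y_\epsilon(\tau_i(x^{*}))>K_i$ only asserts an overshoot at some later time, not immediately to the right of $\tau_i(y_\epsilon)$. Resolving this will require ruling out tangential touches by exploiting continuity of $y_\epsilon$ and the strict positivity of $y_\epsilon-x^{*}$ on $(0,T]$, or else selecting a subsequence $\epsilon_k\to 0$ along which the strict crossing holds, using the positive quadratic variation $\partial_t\langle y_{\epsilon_k}\rangle_t=(\sigma^{*})^2 y_{\epsilon_k}^2>0$ to preclude $y_{\epsilon_k}$ being monotone on any interval to the right of $\tau_i(y_{\epsilon_k})$.
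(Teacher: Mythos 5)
Your part 1) is fine and matches the paper. For part 2), however, there are genuine gaps, and the central one is that your neighborhood $U_{x^{*}}$ is the wrong set. You impose the transversal-crossing condition (b) on each member $x$ at \emph{its own} stopping time $\tau_i(x)$; this does not control the relation between $\tau_i(x_n)$ and $\tau_i(x^{*})$. Concretely, if $x^{*}$ touches the level $K_i$ tangentially at $\tau_i(x^{*})$ (a local maximum with value exactly $K_i$, which Brownian-type paths in $J_{\mathcal{T}}^{\Sigma}(x_0)$ do exhibit) and only properly exceeds $K_i$ at some much later time $t_1$, then a sequence such as $x_n=x^{*}e^{-\epsilon_n t}$ converges to $x^{*}$ in $d_{QV}$, can satisfy both (a) and (b), yet has $\tau_i(x_n)\to t_1\neq\tau_i(x^{*})$, violating condition i) of Definition \ref{joint-strong-LC}. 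So your claimed deduction ``uniform convergence together with transversal crossing of each $x_n$ gives $\tau_i(x_n)\to\tau_i(x^{*})$'' is false. The paper's device is precisely designed to kill this: it builds the one-sided domination $y(t)>x^{*}(t)$ for $t\geq\epsilon$ (or $<$, depending on the case) into the neighborhood itself, so that $y(\tau_i(x^{*}))>K_i$ forces $\tau_i(y)\leq\tau_i(x^{*})$, and the lower bound comes from $\max_{[0,\tau_i(x^{*})-\epsilon']}x^{*}<K_i$ plus uniform convergence; tangential touches of $x^{*}$ cause no trouble. Your one-sided \emph{approximating family} $y_\epsilon=x^{*}e^{\epsilon t}$ is exactly the right object to witness $x^{*}\in\overline{U}_{x^{*}}$ for the paper's neighborhoods (and the paper in fact omits this verification), but you use it only to approach $x^{*}$ rather than as the defining property of the neighborhood.

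Two further problems. First, your asserted equivalence ``$\tau_N(x^{*})=T$, equivalently $\max_{[0,T]}x^{*}<K_N$'' is wrong: $\tau_N(x^{*})=T$ also occurs when $x^{*}<K_N$ on $[0,T)$ but $x^{*}(T)=K_N$ (the paper's case 2b). In that case every upward approximant has $y_\epsilon(T)>K_N$, hence $\tau_N(y_\epsilon)<T$ and $M(y_\epsilon)\geq N+1$, so $y_\epsilon$ leaves your $U_{x^{*}}$ and condition iii) would fail along it; the approximation must then be taken from below, which is why the paper splits into cases 2a and 2b with neighborhoods $U^{1,\epsilon}_{x^{*}}\cap U^{2,\epsilon}_{x^{*}}$ and $U^{1,\epsilon}_{x^{*}}\cap U^{3,\epsilon}_{x^{*}}$ respectively. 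Second, you yourself flag the verification of (b) for $y_\epsilon$ as an unresolved ``principal obstacle,'' and the openness of the transversal-crossing set in $d_{QV}$ is asserted as routine but is not (small perturbations can relocate $\tau_i$ to a different, possibly tangential, crossing). Taken together, part 2) of the proposal is not a proof; the fix is to replace (b) by the pointwise domination conditions of the paper and to split according to whether $\sup_{[0,T]}x^{*}<K_{M(x^{*})}$ or $x^{*}(T)=K_{M(x^{*})}$.
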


\begin{proof}
Fix $x^* \in J_{\mathcal{T}}^{\Sigma}$ and define:

\begin{equation*} \nonumber
U^{1, \epsilon}_{x^*}=\left \{ y \in J_{\mathcal{T}}^{\Sigma}: 0< d_{QV}(y,x^*)< \epsilon \right \},
\end{equation*}

\begin{equation*} \nonumber
U^{2, \epsilon}_{x^*}=\left \{ y \in J_{\mathcal{T}}^{\Sigma}:  y(t) > x^*(t) \; \text{for}  \; t \geq \epsilon \right \}.
\end{equation*}

\begin{equation*} \nonumber
U^{3, \epsilon}_{x^*}=\left \{ y \in J_{\mathcal{T}}^{\Sigma}:  y(t) < x^*(t) \; \text{for}  \; t \geq \epsilon \right \}.
\end{equation*}

\noindent
For each of the two sequences of NP-stopping times introduced above, consider
$U_{x^*}$ respectively as:\\
1) $U_{x^*}=U^{1, \epsilon}_{x^*}$. \\
2) $\displaystyle{
U_{x^*}=\left\{
\begin{array}{ll}
U^{1, \epsilon}_{x^*} \cap U^{2, \epsilon}_{x^*} & \text{in case 2a (see proof below)} \\
U^{1, \epsilon}_{x^*} \cap U^{3, \epsilon}_{x^*} & \text{in case 2b (see proof below)}
\end{array}
\right.
}$

\vspace{.1in}
\noindent
In both cases 1) and 2) a sequence $\{x^{(n)}\}$ converging to $x^*$ in the metric $d_{QV}$ will be considered.

\vspace{.1in}
\noindent
1) If the sequence $x^{(n)} \in U^{1, \epsilon}_{x^*}$ converges to $x^*$ in the metric $d_{QV}$ then
$x^{(n)} \to x^*$ uniformly on $[0,T]$.
The fact that the sequence of stopping times $\tau_i(x)=\min(iT/n, T)$, for $i=0,1, \ldots$ is strong locally continuous
is an obvious consequence of the uniform convergence of $\{x^{(n)}\}$ to $x^*$ and the continuity
of trajectory $x^*$.

\vspace{.1in}
\noindent
2) Consider that $M(x^*)=L^*$. There are two possible cases in which 
$M(x^*)=L^*$. \\
Case 2a) $\displaystyle{K_{L^*-1} \leq \sup_{t \in [0,T]} x_t^*< K_{L^*}}$\\
Case 2b) $\displaystyle{x_T=K_{L^*}}$ and $\displaystyle{x_t<K_{L^*}}$ for all $t \in [0,T)$

Suppose that we are in case 2a). Consider the sequence $x^{(n)} \in U^{1, \epsilon}_{x^*} \cap U^{2, \epsilon}_{x^*}$ converging to $x^*$.
Let us first prove iii) from Definition \ref{joint-strong-LC}. As $x^{(n)} \in U^{2, \epsilon}_{x^*}$, it clearly follows that
$$
\displaystyle{\sup_{t \in [0,T]} x_t^{(n)}  \geq \sup_{t \in [0,T]} x_t^* \geq K_{L^*}}.
$$
On the other hand, as $x^{(n)}$ converges uniformly to $x^*$, for $n$ large enough
$\displaystyle{\sup_{t \in [0,T]} x_t^{(n)}< K_{L^*+1}}$ too. Then we conclude that
$$
\displaystyle{K_{L^*} \leq  \sup_{t \in [0,T]} x_t^{(n)}  < K_{L^*+1}}.
$$
Therefore for $n$ large enough $M(x^{(n)})=L^*$ so iii) has been proven.\\

\noindent
Let us prove i) from Definition \ref{joint-strong-LC}. As $x^{(n)} \in U^{2, \epsilon}_{x^*}$ we have that $\tau_i(x^{(n)}) \leq \tau_i(x^*)$ for all $i$.
Now fix $\epsilon>0$, then $x^*(t)< K_i$ if $t\leq \tau_i(x^*)-\epsilon$. As $x^{(n)}$ converges uniformly to $x^*$
we also have that $x^{(n)}(t) < K_i$ if $t\leq \tau_i(x^*)-\epsilon$ for $n$ large enough, which implies that
$\tau_i(x^{(n)})>\tau_i(x^*)-\epsilon$ for $n$ large enough. Then
$$
\tau_i(x^*)-\epsilon < \tau_i(x^{(n)}) \leq \tau_i(x^*).
$$
As $\epsilon$ can be chosen as small as wanted then $\displaystyle{\lim_{n \to \infty} \tau_i(x^{(n)}) =\tau_i(x^*)}$ for all $i$,
thus i) is proven.

\noindent
In order to prove ii) from Definition \ref{joint-strong-LC}, notice that:
$$
\left| x^{(n)}(\tau_i(x^{(n)}))-x^*(\tau_i(x^*)) \right| \leq \left| x^*(\tau_i(x^{(n)}))-x^*(\tau_i(x^*)) \right|
+ \left| x^{(n)}(\tau_i(x^{(n)}))- x^*(\tau_i(x^{(n)})) \right|.
$$
The first term in the previous sum converges to 0 because $x^*$ is continuous and $\tau_i(x^{(n)}) \to \tau_i(x^*)$.
The second term also converges to 0 as consequence of the uniform convergence of $x^{(n)}$  to $x^*$.
Then we can conclude that $x^{(n)}(\tau_i(x^{(n)})) \to x^*(\tau_i(x^*))$ as $n \to \infty$, therefore ii) is proven.\\
Case 2b) follows similarly.
\qed

\end{proof}

\subsection{Arbitrage-Free NP-Portfolios for Heston-Type 
Trajectory Space  $J_{\mathcal{T}}^{\Sigma}(x_0)$}  \label{transferingNoArbitrageToVariableVolClass}

This section introduces a specific volatility class $\Sigma$ leading to  an associated trajectory space $J_{\mathcal{T}}^{\Sigma}$; it also describes a class of NP-portfolios that are NP-arbitrage free on this  trajectory space.
This is achieved by making use of Theorem \ref{main-arbitrage-local-cont} which, in turns, requires the introduction of an appropriate stochastic market model. This model is given by a Heston-type stochastic volatility process which is also used to define  the class of volatility functions $\Sigma$.





\begin{eqnarray}\label{Heston-model}
Z_t&=&z_0 \exp \left( \displaystyle{ \int_0^t (\mu-\sigma_s^2/2)ds+\int_0^t \alpha \sigma_s dB_s^{(1)}+\int_0^t \sqrt{1-\alpha^2} \sigma_s dB_s^{(2)}}\right)\\
\sigma_s^2&=&\bar{V}_s \nonumber \\
\bar{V}_s&=&\frac{1}{h}\int_{s-h}^s V_t dt, h>0 \nonumber \\
dV_s&=&k(\theta-V_s)+\xi\sqrt{V_s}dB_s^{(2)}, V_0=v_0, \nonumber
\end{eqnarray}

\noindent where $B^{(1)}$ and $B^{(2)}$ are independent Brownian motions, $0<\alpha<1$
and $k$, $\theta$, $\xi$ are positive real numbers. 
In order for $\bar{V}_s$ to be defined when $s<h$ we will assume that $V_t=v_0$ for 
$t \in [-h,0]$. 
To guarantee that the Cox-Ingersoll-Ross (CIR) process $V$
remains strictly positive we will also assume that $2k \theta \geq \xi^2$ 
(see \cite{Cox-Ing}).\\
\\
The model described in (\ref{Heston-model}) is very similar to the classical Heston model. The main modification is the regularization of the volatility process $\sigma$, which is usually defined as $\sigma_s^2=V_s$.  
If $h$ is small, $V_s$ and $\bar{V}_s$
will be close, meaning that if the Heston model fits empirical returns data, the regularized model also does. Similar arguments have been used previously in order to establish the practical validity of a model, see for example \cite{cheridito}.\\
\\
Let $S_{\sigma}$ be the topological support of process $\sigma$, i.e. the minimal closed subset $A$ of $C[0,T]$ (equipped with the uniform norm topology) such that $P(\sigma(\omega) \in A)=1$. 
Consider now the set $\Sigma=\left\{ x \in S_{\sigma}: x \text{ has finite variation} \right\}$. It can be easily checked, that almost surely the trajectories of the volatility process $\sigma$ are differentiable therefore have finite variation, which implies that $P(\sigma(\omega) \in \Sigma)=1$.
In particular, $\Sigma$ is non-empty, but also that almost surely the trajectories of the price process $Z$ belong to $J_{\mathcal{T}}^{\Sigma}(z_0)$, therefore Condition $C_0$, in Theorem \ref{main-arbitrage-local-cont},  is satisfied.\\
\\
That the process $\sigma$ satisfies a small ball property on $\Sigma$ with respect to the uniform norm
is consequence of the fact that $\Sigma$ is a subset of $S_{\sigma}$, the topological support of process $\sigma$.
Then a direct application of Proposition \ref{prop:brownian-dep} implies that Condition $C_1$ is also satisfied.\\
\\
By conveniently changing the drift, it can be checked that the Heston type model above is arbitrage free. Now we will transfer the no arbitrage
property from this model to the NP-model $J_{\mathcal{T}}^{\Sigma}(z_0)$.\\
\\
Let $\Phi$ be a NP-admissible portfolio strategy defined on $J_{\mathcal{T}}^{\Sigma}(z_0)$
that is given as described in Theorems \ref{simplePortfoliosAreLC} or \ref{loc-cont-portfolio-under-QV}. The sequence of stopping times
that defines $\Phi$ is considered as in Proposition \ref{pr:stoppingtimes-stoch-vol}. This guarantees that
$\Phi$ is locally $V$-continuous on  $J_{\mathcal{T}}^{\Sigma}(z_0)$ under the metric $d_{QV}$.\\
\\
As the trajectories of the Heston model $Z(\omega)$  belong a.s. to $J_{\mathcal{T}}^{\Sigma}(z_0)$ then
we can consider the isomorphic portfolio $\Phi^Z$ on $Z$, defined a.s. by
\begin{equation}\label{eq:iso-port}
\Phi^Z(t,\omega)=\Phi(t,Z(\omega))
\end{equation}
Portfolio $\Phi^Z$ is admissible on $Z$, therefore $\Phi^Z$ is not an arbitrage for this Heston model.
Directly applying Theorem \ref{main-arbitrage-local-cont} we then conclude that $\Phi$ is not a NP-arbitrage portfolio
on $J_{\mathcal{T}}^{\Sigma}(z_0)$.

\subsection{Implications to  Modified Stochastic Heston Volatility Model}
\label{modifiedHestonModel}

Let us  consider a modified Heston stochastic volatility model similar to (\ref{Heston-model}),
the only difference is the addition of a new stochastic term $Y_t$ as follows.

\begin{equation}\label{modif-Heston-model}
Z^m_t=z_0 \exp \left( \displaystyle{ \int_0^t (\mu-\sigma_s^2/2)ds+\int_0^t \alpha \sigma_s dB_s^{(1)}+\int_0^t \sqrt{1-\alpha^2} \sigma_s dB_s^{(2)} + Y_t }\right)
\end{equation}

The stochastic process $Y$ is assumed to be continuous, with null quadratic variation and independent of $B^{(1)}$ and $B^{(2)}$.
Analogously to the Heston model in (\ref{Heston-model}), it can be proven that the modified process in (\ref{modif-Heston-model}) satisfies the conditions $C_0$ and $C_1$, from Theorem \ref{main-arbitrage-local-cont}, relative to the set of trajectories $J_{\mathcal{T}}^{\Sigma}(z_0)$ and the metric $d_{QV}$.\\
\\
We already argued for the fact that NP-portfolios $\Phi$ as described in Theorems \ref{simplePortfoliosAreLC} or \ref{loc-cont-portfolio-under-QV}
do not constitute NP-arbitrage opportunities. The no arbitrage property will be transferred now to the modified Heston model in (\ref{modif-Heston-model}). Towards this end, consider now the isomorphic portfolio 
$\Phi^Z$ defined almost surely  by (\ref{eq:iso-port}).
As $\Phi$ is not an arbitrage on $J_{\mathcal{T}}^{\Sigma}(z_0)$, Theorem \ref{main-arbitrage-local-cont}
can be applied to conclude that $\Phi^Z$ is not an arbitrage for the modified Heston model.\\
\\
It is worth noticing that the conditions imposed on $Y$ are not very strong, so the model becomes quite flexible.
For example, if $Y=B^H$ is a fractional Brownian motion with $1/2< H \leq 3/4$,
the price process $Z$ will not be a semimartingale.\\

\section{Overview}  \label{overview}

The publication \cite{AFO2011} proposes a trajectory based modeling of financial markets. The main strategy put forward in order to establish no arbitrage results is to connect the proposed trajectory based models with a classical
stochastic reference market model.
This connection is achieved through imposing continuity hypothesis and a density condition in the form of small balls.
The present paper continues and strengthens this line of research by incorporating a richer class of practical portfolios defined through NP-stopping times.  It turns out that realistic trajectory sets and an associated large class of practical portfolios can be defined providing  NP arbitrage free models. This indicates the plausibility of pursuing trajectory based market models. It is natural to expect that many of the results
in the paper can be extended, for example several more examples of sequences of stopping times could  be proven to be jointly strong locally continuous, we have refrained from doing so given the technical demands of the proofs.

In the case of complete markets one can also establish trajectory per trajectory  hedging results and define a natural minmax based pricing methodology that covers the incomplete market case as well (as described in \cite{AFO2011}). Reference \cite{degano}
contains a detailed development of this pricing technique in the discrete case for incomplete market models. Moreover, this last reference establishes a no arbitrage
result, for discrete trajectory based markets, that does not require any reference to a stochastic market model.\\


\appendix\normalsize{{\bf Appendix.} Technical Results and Proofs.} \label{tech-results}

\begin{lemma}\label{Lemma-correspondence-jumps}
Let $0=\tau_0 \leq \tau_1\leq \tau_2\leq \cdots \leq T$  be a jointly strong locally continuous sequence
of NP-stopping times (as per Definition \ref{joint-strong-LC}) defined on $\mathcal{J}_\mathcal{T}^{\sigma, C}(x_0)$ with respect to the Skorohod's metric. Assume that $\inf_{c \in C}|c|>0$. 
Fix $x^* \in \mathcal{J}_\mathcal{T}^{\sigma, C}(x_0)$. Then, there exists an open set $U_{x^*} \subset \mathcal{J}_\mathcal{T}^{\sigma, C}(x_0)$
such that $x^* \in \overline{U}_{x^*}$ and whenever $x_n \to x^*$ in $U_{x^*}$ we have that:

$$ \sum_{s \in (\tau_i(x_n), \tau_{i+1}(x_n)]} 1_{\mathbb{R}\setminus\{0\}}(x_n(s)-x_n(s-))$$ 

converges to 

$$\sum_{s \in (\tau_i(x^*), \tau_{i+1}(x^*)]} 1_{\mathbb{R}\setminus\{0\}}(x^*(s)-x^*(s-))$$

as $n$ approaches infinity for all $i \geq 0$.

\end{lemma}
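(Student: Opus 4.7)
The plan is to take $U_{x^*}$ to be (a possibly shrunk version of) the open neighbourhood provided by the joint strong local continuity of $\{\tau_n\}$. Writing $x^*$ as in (\ref{eq:diffpoisson}), its jumps occur at finitely many times $s_1<\cdots<s_m$ with $m=n^*(T)$, and each jump has magnitude $|x^*(s_j)-x^*(s_j-)|=x^*(s_j-)\,|a_j^*|$. Since $x^*$ is continuous between jumps, strictly positive and defined on a compact interval, it is bounded away from $0$ and $\infty$; combined with the hypothesis $\inf_{c\in C}|c|>0$, this yields a uniform lower bound $\delta>0$ on the absolute jump sizes of $x^*$. I would then establish two things: (i) for $n$ large, $x_n$ has exactly $m$ jumps, at times $s_1^n<\cdots<s_m^n$ with $s_j^n\to s_j$; and (ii) for each $i,j$, the membership $s_j^n\in(\tau_i(x_n),\tau_{i+1}(x_n)]$ agrees with $s_j\in(\tau_i(x^*),\tau_{i+1}(x^*)]$ for all $n$ sufficiently large. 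Summing over $j$ gives the claimed convergence.

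For (i), I would invoke the standard characterisation of Skorohod convergence: there exist time changes $\lambda_n\in\Lambda$ with $\|\lambda_n-I\|\to 0$ and $\|x_n\circ\lambda_n-x^*\|\to 0$. Because the jumps of $x^*$ are isolated and of magnitude $\geq\delta$, for $n$ large every jump of $x_n$ of size $\geq\delta/2$ must sit near some $s_j$, and conversely each $s_j$ must be matched by a jump of $x_n$. Since every trajectory in $\mathcal{J}_\mathcal{T}^{\sigma,C}(x_0)$ has jump magnitudes bounded below by $(\min_t x_n(t))\cdot\inf_{c\in C}|c|$, and $\min_t x_n(t)$ is uniformly bounded below for $n$ large (from uniform convergence of $x_n\circ\lambda_n$ to $x^*$), no ``small'' spurious jumps of $x_n$ can exist. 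This yields a one-to-one matching $s_j\leftrightarrow s_j^n$ with $s_j^n\to s_j$, $x_n(s_j^n)\to x^*(s_j)$ and $x_n(s_j^n-)\to x^*(s_j-)$.

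For (ii), I would split into cases according to the position of $s_j$ relative to $[\tau_i(x^*),\tau_{i+1}(x^*)]$. If $s_j$ lies strictly in the interior or strictly outside this interval, clause (i) of Definition \ref{joint-strong-LC} together with $s_j^n\to s_j$ immediately transfers the membership. The delicate cases are $s_j=\tau_i(x^*)$ and $s_j=\tau_{i+1}(x^*)$. Here I would exploit clause (ii): $x_n(\tau_k(x_n))\to x^*(\tau_k(x^*))=x^*(s_j)$, the post-jump value. If $\tau_k(x_n)<s_j^n$ along some subsequence, then $x_n(\tau_k(x_n))$ would be forced towards the pre-jump value $x^*(s_j-)$, which differs from $x^*(s_j)$ by at least $\delta$, contradicting the convergence. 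Hence $\tau_k(x_n)\geq s_j^n$ for $n$ large, which places $s_j^n$ outside $(\tau_i(x_n),\tau_{i+1}(x_n)]$ when $k=i$ and inside it (provided $\tau_i(x^*)<s_j$) when $k=i+1$, mirroring the behaviour of $x^*$.

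The main obstacle I anticipate is precisely this boundary analysis, and in particular the degenerate situation in which several consecutive stopping times coalesce at a jump time of $x^*$: one must check that the intervals $(\tau_i(x_n),\tau_{i+1}(x_n)]$ arrange themselves around $s_j^n$ in a way that mirrors the (possibly empty) intervals $(\tau_i(x^*),\tau_{i+1}(x^*)]$. Applying the post-jump condition $x_n(\tau_k(x_n))\to x^*(\tau_k(x^*))$ simultaneously at $k=i$ and $k=i+1$ is what makes this work, and this is precisely where the \emph{strong} local continuity (clause (ii) of Definition \ref{joint-strong-LC}) is essential; mere convergence of the stopping-time values would not suffice.
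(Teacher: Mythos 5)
Your proposal is correct and follows essentially the same route as the paper's proof: both use the neighbourhood supplied by joint strong local continuity, match the jumps of $x_n$ to those of $x^*$ via Skorohod time changes $\lambda_n$ (the paper delegates this to Lemma 2 of \cite{AFO2011}, you argue it directly from the uniform lower bound on jump sizes coming from $\inf_{c\in C}|c|>0$), and resolve the boundary case of a jump sitting exactly at a stopping time by playing clause (ii) of Definition \ref{joint-strong-LC} against the gap between pre- and post-jump values. If anything you are slightly more explicit than the paper about the degenerate situation where consecutive stopping times coalesce at a jump time, but the underlying argument is the same.
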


\begin{proof}
Fix $x^* \in \mathcal{J}_\mathcal{T}^{\sigma, C}(x_0)$. As $0=\tau_0 \leq \tau_1\leq \tau_2\leq \cdots \leq T$
is jointly strong locally continuous, there exists an open set $U_{x^*} \subset \mathcal{J}_\mathcal{T}^{\sigma, C}(x_0)$ 
as in Definition \ref{joint-strong-LC}. Let $\left\{x_n\right\}_{n \geq 1}$ be any sequence  of elements
in  $U_{x^*}$ converging to $x^*$ in the Skorohod's topology. Now we will consider two possible cases:

Case 1: Consider that $x^*$ jumps at the points $y_1< y_2 ...< y_m$ in the open interval 
$\left(\tau_i(x^*), \tau_{i+1}(x^*)\right)$. As $x_n \to x^*$ in the Skorohod's topology,
there exists an increasing function $\lambda_n:[0,T] \to [0,T]$ with $\lambda_n(0)=0$ and
$\lambda_n(T)=T$ such that both $\lambda_n(t)-t \to 0$ and $x_n(\lambda_n(t))-x^*(t) \to 0$ 
uniformly in $[0,T]$. We know from Lemma 2 in \cite{AFO2011} that for $n$ large enough 
the trajectory $x_n$ jumps at the points $\lambda_n(y_1)<\lambda_n(y_2)< ... <\lambda_n(y_m)$.
As the sequence $\left\{ \tau_n \right \}_{n=0,1, ...}$ is jointly strong locally continuous 
we have that $\tau_i(x_n) \to \tau_i(x^*)$ and $\tau_{i+1}(x_n) \to \tau_{i+1}(x^*)$
as $n$ approaches infinity. On the other hand we know that $\lambda_n(t) \to t$ for $t \in [0,T]$
as $n$ approaches infinity. Then we can conclude that
$$\tau_i(x_n)<\lambda_n(y_1)<\lambda_n(y_2)< ... <\lambda_n(y_m) <\tau_{i+1}(x_n)$$
meaning that if $n$ is large enough, for every jump of $x^*$ in the open interval 
$\left(\tau_i(x^*), \tau_{i+1}(x^*)\right)$ 
there is a jump of $x_n$ in $\left(\tau_i(x_n), \tau_{i+1}(x_n)\right)$.

Case 2: Now suppose that  $x^*$ jumps exactly at the point $\tau_{i+1}(x^*)$. We know that for $n$ large enough
$x_n$ will jump at the point $\lambda_n(\tau_{i+1}(x^*))$. The triangle inequality implies that
\begin{equation} \nonumber
|x_n(\tau_{i+1}(x_n))-x_n(\lambda_n(\tau_{i+1}(x^*)))| \leq |x_n(\tau_{i+1}(x_n))-x^*(\tau_{i+1}(x^*))| +
\end{equation}
\begin{equation} \nonumber
|x_n(\lambda_n(\tau_{i+1}(x^*)))-x^*(\tau_{i+1}(x^*))|.
\end{equation}

The first term in the right hand side converges to 0 as n approaches infinity because 
the sequence $\left\{ \tau_n \right \}_{n=0,1, ...}$ is jointly strong locally continuous.
The second term in the right hand side converges to 0 because $x_n \to x^*$ in the Skorohod's topology.
Then we conclude that $|x_n(\tau_{i+1}(x_n))-x_n(\lambda_n(\tau_{i+1}(x^*)))|$ approaches 0 as $n$
approaches infinity, meaning that the point $\tau_{i+1}(x_n)\geq \lambda_n(\tau_{i+1}(x^*))$. 
Then we can conclude that if $x^*$ jumps exactly at the point $\tau_{i+1}(x^*)$, then
for $n$ large enough, the trajectory $x_n$ jumps at $\lambda_n(\tau_{i+1}(x^*))$, and this point 
satisfies that $\tau_{i}(x_n) < \lambda_n(\tau_{i+1}(x^*)) \leq \tau_{i+1}(x_n)$.\\

Cases 1 and 2 imply that 

$$ \sum_{s \in (\tau_i(x_n), \tau_{i+1}(x_n)]} 1_{\mathbb{R}\setminus\{0\}}(x_n(s)-x_n(s-))$$ 

converges to 

$$\sum_{s \in (\tau_i(x^*), \tau_{i+1}(x^*)]} 1_{\mathbb{R}\setminus\{0\}}(x^*(s)-x^*(s-))$$

as $n$ approaches infinity for all $i \geq 0$.
\qed
\end{proof}

\noindent
{\bf Proof of Propositon} \ref{pr:stop-tim-ex}.
\vspace{-.1in}
\begin{proof}
Fix $x^* \in \mathcal{J}_\mathcal{T}^{\sigma, C}(x_0)$, then $x^*(t)=x_0 e^{\sigma z^*(t)}\prod_{i=1}^{n^*(t)} (1+a_i^*)$ for some
$z^* \in \mathcal{Z}_{\mathcal{T}}([0, T])$, $n^*(t)=\sum_i 1_{[0,t]}(s_i^*) \in \mathcal{N}([0,T])$,  and real numbers $a_i^* \in C$, $i=1,2,\ldots,n^*(T)$.
The proof of this proposition strongly relies on finding the appropriate open sets $U_{x^*}$ for each case.  Before constructing
these sets let us introduce some notation. Considering that
any element $y \in \mathcal{J}_\mathcal{T}^{\sigma, C}(x_0)$ has the form $y(t)= e^{\sigma z^y(t)}\prod_{i=1}^{n^y(t)} (1+a_i^y)$
with $z^y \in \mathcal{Z}_{\mathcal{T}}([0, T])$, $n^y(t)=\sum_i 1_{[0,t]}(s_i^y) \in \mathcal{N}([0,T])$,  and $a_i^y \in C$, define:\\

\begin{equation} \nonumber
U^{1, \epsilon}_{x^*}=\left \{ y \in \mathcal{J}_\mathcal{T}^{\sigma, C}(x_0): 0< d_S(y,x^*)< \epsilon \right \},
\end{equation}

\begin{equation} \nonumber
U^{2}_{x^*}=\left \{ y \in \mathcal{J}_\mathcal{T}^{\sigma, C}(x_0):  s_i^y < s_i^*\; \text{for}  \; i \leq n^*(T) \right \},
\end{equation}

\begin{equation} \nonumber
U^{3,\epsilon}_{x^*}=\left \{ y \in \mathcal{J}_\mathcal{T}^{\sigma, C}(x_0): z^*(t)-z^y(t)<0 \; \text{for} \; \epsilon \leq t \leq T \right \},
\end{equation}

\begin{equation}  \nonumber
U^{4}_{x^*}=\left \{ y \in \mathcal{J}_\mathcal{T}^{\sigma, C}(x_0):  a_i^*-a_i^y < 0 \; \text{for} \; i \leq n^*(T) \right \},
\end{equation}

\begin{equation}  \nonumber
U^{5}_{x^*}=\left \{ y \in \mathcal{J}_\mathcal{T}^{\sigma, C}(x_0):  (s_i^y - s_i^*)a_i^*<0 \; \text{for} \; i \leq n^*(T) \right \}.
\end{equation}

\begin{equation} \nonumber
U^{6,\epsilon}_{x^*}=\left \{ y \in \mathcal{J}_\mathcal{T}^{\sigma, C}(x_0): z^*(t)-z^y(t)>0 \; \text{for} \; \epsilon \leq t \leq T \right \},
\end{equation}

\begin{equation}  \nonumber
U^{7}_{x^*}=\left \{ y \in \mathcal{J}_\mathcal{T}^{\sigma, C}(x_0):  a_i^*-a_i^y > 0 \; \text{for} \; i \leq n^*(T) \right \},
\end{equation}

\begin{equation}  \nonumber
U^{8}_{x^*}=\left \{ y \in \mathcal{J}_\mathcal{T}^{\sigma, C}(x_0):  (s_i^y - s_i^*)a_i^*>0 \; \text{for} \; i \leq n^*(T) \right \}.
\end{equation}


\noindent 
For each of the previous sequences of NP-stopping times, consider $U_{x^*}$ respectively as:\\
\\
1) $U_{x^*}=U^{1,\epsilon}_{x^*} \bigcap U^{2}_{x^*}.$\\
2) $U_{x^*}=U^{1,\epsilon}_{x^*}.$\\
3) $U_{x^*}=\left \{
\begin{array}{l l}
U^{1,\epsilon}_{x^*}  \bigcap U^{3,\epsilon}_{x^*} \bigcap U^{4}_{x^*} \bigcap U^{5}_{x^*} & \text{ in cases 3a and 3b (see proof below),}\\
U^{1,\epsilon}_{x^*}  \bigcap U^{6,\epsilon}_{x^*} \bigcap U^{7}_{x^*} \bigcap U^{8}_{x^*} & \text{ in cases 3c and 3d (see proof below).}
\end{array}
\right.$


The fact that $U_{x^*}$ for each of the three cases is an open set is a consequence of Lemma 2 in \cite{AFO2011}.

\vspace{.1in}
\noindent
In  these three cases, a sequence $\{x^{(n)}\}$ converging to $x^*$ will be considered.
As $\{x^{(n)}\}$ converges to $x^*$ in the Skorohod's metric, then there exists a sequence of increasing
functions ${\lambda_n(t)}$ satisfying $\lambda_n(0)=0$, $\lambda_n(T)=T$ such that:

\begin{equation}\label{Skorohod's condition}
\left| x^*(t)- x^{(n)}(\lambda_n(t)) \right| \to 0 \; \mbox{ uniformly on } [0,T] \; as  \; n \to \infty
\end{equation}
and

\begin{equation}\label{Skorohod's condition lambda}
\left| \lambda_n(t)-t \right| \to 0 \; \mbox{ uniformly on } [0,T] \; as  \; n \to \infty.
\end{equation}

\noindent Case 1):\\
\\
Consider a sequence of trajectories $\{x^{(n)}\}$ in $U^{1,\epsilon}_{x^*} \bigcap U^{2}_{x^*}$ converging to $x^*$ in the
Skorohod's topology.\\
\\
Given that the NP-stopping times $\tau_i(x)=\min(iT/N, T)$ do not depend on the trajectory $x$, properties i) and iii)
in Definition \ref{joint-strong-LC} are clearly satisfied.\\
\\
Now let us prove ii). Consider any $s \in [0,T]$. Then
\begin{eqnarray}\label{eq:triangular ineq}
\left|x^{(n)}(s)-x(s)\right|&=&\left|x^{(n)}(s)- x^*\left(\lambda_n^{-1}(s)\right)+ x^*\left(\lambda_n^{-1}(s)\right)-x^*(s) \right|\\
 &\leq& \left|x^{(n)}(s)- x^*\left(\lambda_n^{-1}(s)\right) \right|+\left| x^*\left(\lambda_n^{-1}(s)\right)-x^*(s) \right|. \nonumber
\end{eqnarray}
The term  $\displaystyle{\left|x^{(n)}(s)- x^*\left(\lambda_n^{-1}(s)\right) \right|}$ converges to 0 as n goes to infinity
as consequence of (\ref{Skorohod's condition}).\\
\\
From (\ref{Skorohod's condition lambda}) we get that $\lambda_n^{-1}(s) \to s$ as n goes to infinity, therefore
if $x^*$ is continuous at s, we obtain that $\displaystyle{\left| x^*\left(\lambda_n^{-1}(s)\right)-x^*(s) \right| \to 0}$ as n goes to infinity.\\
\\
If $x^*$ has a jump at s, meaning that $s=s_i^*$ for some i, we have from Lemma 2 in \cite{AFO2011} that there exists an integer number
$N_0$ such that if $n>N_0$, $\lambda_n(s_i^*)=s_i^{x^{(n)}}$. Moreover, as $x^{(n)} \in U^{2}_{x^*}$ we have that $s_i^{x^{(n)}}<s_i^*$,
therefore $\lambda_n(s_i^*)< s_i^*$ and $s_i^*< \lambda_n^{-1}(s_i^*)$. This means that $\lambda_n^{-1}(s_i^*)$ converges to $s_i^*$
from the right. As $x^*$ is right continuous and $s=s_i^*$, we have that
$\displaystyle{\left| x^*\left(\lambda_n^{-1}(s)\right)-x^*(s) \right|} $ also converges to 0 if $x^*$ has a jump at s.\\
\\
From (\ref{eq:triangular ineq}) we conclude that $\displaystyle{\left|x^{(n)}(s)-x(s)\right| \to 0}$ for any $s \in [0,T]$.
In particular this will be true for $s = \tau_i(x)$, thus ii) is proven.\\
\\
Case 2):\\
\noindent From Lemma 2 in \cite{AFO2011} it follows that  for any sequence ${x^{(n)}}$ converging to $x^*$:\\
\\
i) $\lim_{n \rightarrow \infty}  M(x^{(n)})=M(x^*)$.\\
ii) $\lim_{n \rightarrow \infty} \tau_i(x^{(n)}) =\tau_i(x^*)$.\\

\noindent
Again, as a consequence of Lemma 2 in \cite{AFO2011}, there exists an integer number
$N_0$ such that if $n>N_0$, $\lambda_n(s_i^*)=s_i^{x^{(n)}}$. Therefore, if
$n>N_0$ we have:

\[
\left| x^*(s_i^*)- x^{(n)}(\lambda_n(s_i^*)) \right| = \left| x^*(s_i^*)- x^{(n)}(s_i^{x^{(n)}}) \right|.
\]
Using  (\ref{Skorohod's condition}) we have that $\left| x^*(s_i^*)- x^{(n)}(s_i^{x^{(n)}}) \right| \to 0$, so:\\
\\
iii)  $\lim_{n \rightarrow \infty} x_n(\tau_i(x_n)) =x^*(\tau_i(x^*))$.\\
Therefore, the joint strong local continuity property has been proven.\\
\\
Case 3:\\
Consider that $M(x^*)=L^*$. There are four possible cases in which 
$M(x^*)=L^*$:\\

\noindent Case 3a:  
$\displaystyle{K_{L^*-1}< \sup_{t \in [0,T]} x_t^*< K_{L^*}}$\\
\noindent Case 3b: 
$\displaystyle{\sup_{t \in [0,T]} x_t^*= K_{L^*-1}}$ and there exists 
$s \in [0,T)$ such that $x_s^*=K_{L^*-1}$\\
\noindent Case 3c: 
$\displaystyle{\sup_{t \in [0,T]} x_t^*= K_{L^*}}$ and $x_t^*<K_{L^*}$ for all $t \in [0,T]$\\
\noindent Case 3d: 
$\displaystyle{\sup_{t \in [0,T]} x_t^*= K_{L^*}}$ and $x_t^*<K_{L^*}$ for all $t \in [0,T)$, $x_T^*=K_{L^*}$\\

\noindent Consider the cases 3a and 3b at the same time. We have that
$\displaystyle{K_{L^*-1}\leq \sup_{t \in [0,T]} x_t^*< K_{L^*}}$.     
Then, consider a sequence of trajectories $\{x^{(n)}\}$ in $U^{1,\epsilon}_{x^*}  \bigcap U^{3,\epsilon}_{x^*} \bigcap U^{4}_{x^*} \bigcap U^{5}_{x^*}$ converging to $x^*$ in the Skorohod's topology.\\
We will first prove prove iii) in Definition \ref{joint-strong-LC}.

As $\{x^{(n)}\} \to x^*$ in the Skorohod's topology,
it is easy to check that there exists $N_0 \in \mathbb{N}$, depending on $x^*$ such that if $n>N_0$ then
$\displaystyle{\sup_{t \in [0,T]} x_t^{(n)}<K_{L^*}}$.\\
\\
As trajectories $\{x^{(l)}\}$ belong to $U^{1,\epsilon}_{x^*}$, from Lemma 2 in \cite{AFO2011} we know that there exists
$N_1$ such that if $l>N_1$, then $n^{(x^{(l)})}(T)=n^*(T)$, meaning that for $l$ large enough, trajectory $x^{(l)}$ has
exactly the same number of jumps as $x^*$, moreover, the jump times of $x^{(l)}$ are close to the jump times of $x^*$.
Additionally, as trajectories $\{x^{(l)}\}$ belong simultaneously to $U^{4}_{x^*}$ and $U^{5}_{x^*}$, it can be
verified that if $l>N_1$, then for all $t \in [0,T]$:

\begin{equation}\label{eq:comparing-jumps}
\prod_{i=1}^{n^{(x^{(l)})}(t)}\left( 1+a_i^{x^{(l)}} \right) \geq \prod_{i=1}^{n^*(t)} \left(1+a_i^* \right).
\end{equation}
Given that  $\{x^{(l)}\}$ belongs to $U^{3,\epsilon}_{x^*}$, we also have that:
\begin{equation}\label{eq:comparing-brown}
e^{\sigma z^{(x^{(l)})}(t)} >  e^{\sigma z^*(t)}
\end{equation}
for $\epsilon \leq t \leq T$. Combining expressions (\ref{eq:comparing-jumps}) and (\ref{eq:comparing-brown}), we can see that
if $l>N_1$  holds, then:
\begin{equation}\label{eq:comparing-all}
x^{(l)}(t)=x_0 e^{\sigma z^{(x^{(l)})}(t)}\prod_{i=1}^{n^{(x^{(l)})}(t)}\left( 1+a_i^{x^{(l)}} \right)
> x_0 e^{\sigma z^*(t)} \prod_{i=1}^{n^*(t)} \left(1+a_i^* \right) = x^*(t),
\end{equation}
for all $t \in [\epsilon,T]$. Therefore, for $l$ large enough
\[
 K_{L^* } > \sup_{t \in [0,T]} x_t^{(l)} \geq \sup_{t \in [0,T]} x_t^* ,
\]
which implies  that $M(x^{(l)})=L^*$ for $l$ large enough so
\[
\lim_{l \to \infty} M(x^{(l)})=L^*=M(x^*)
\]
and hence iii) in Definition \ref{joint-strong-LC} has been proven.\\
\\

Now let us prove i) in Definition \ref{joint-strong-LC}. From (\ref{eq:comparing-all}) we know that for $l$ large enough
it holds $x^{(l)}(t)>x^*(t)$ for all $t \in [\epsilon,T]$, therefore
$\tau_i(x^{(l)}) \leq \tau_i(x^*)$, for $i=1,2,\ldots,M(x^*)-1$. For 
$i=M(x^*)$, we have that $\tau_i(x^*)=T=\tau_i(x^{(l)})$.
Fix now $\epsilon'>0$, for any $t$ such that $\tau_i(x^*)-\epsilon'<t < \tau_i(x^*)$, the definition of $\tau_i$
implies that $x^*(s)<K_i$ for all $0 \leq s \leq t$.
Then, the convergence of $x^{(l)}$ to $x^*$ in the Skorohod's metric implies that for $l$ large
enough, $x^{(l)}(s)<K_i$ for all $0 \leq s \leq \lambda_l(t)$, meaning that for $l$ large
enough $\lambda_l(t) < \tau_i(x^{(l)})$. On the other hand, as $\lambda_l$ is strictly increasing, we have
$\lambda_l(\tau_i(x^*)-\epsilon')<\lambda_l(t)$. All this implies that for $l$ large enough
\begin{equation}\label{chain-comparison}
\lambda_l(\tau_i(x^*)-\epsilon')<\lambda_l(t) < \tau_i(x^{(l)}) \leq \tau_i(x^*)
\end{equation}
therefore
\[
\lambda_l(\tau_i(x^*)-\epsilon')-\tau_i(x^*)< \tau_i(x^{(l)})- \tau_i(x^*) \leq 0.
\]

\noindent When $l$ approaches infinity the expression in the left hand side approaches $-\epsilon'$.
As $\epsilon'$ can be chosen as small as we want, then the Squeeze Theorem implies
that $\tau_i(x^{(l)}) \to \tau_i(x^*)$ as $l$ approaches infinity, thus i) is proven.\\
\\
In order to prove ii) in Definition \ref{joint-strong-LC}, notice that the triangle inequality gives
\begin{eqnarray} \label{triangleInequality}
\left|x^*(\tau_i(x^*))-x^{(l)}(\tau_i(x^{(l)})) \right| &\leq & \left|x^*(\tau_i(x^*))-x^*(\lambda_l^{-1}(\tau_i(x^{(l)}))) \right| \nonumber \\
                                                  && + \left|x^*(\lambda_l^{-1}(\tau_i(x^{(l)})))- x^{(l)}(\tau_i(x^{(l)}))\right|.
\end{eqnarray}

\noindent
As a consequence of (\ref{chain-comparison}) we obtain
\[
\tau_i(x^*)-\epsilon < \lambda_l^{-1} (\tau_i(x^{(l)})).
\]
As $\epsilon$ can be chosen as small as wanted, it follows that $\lambda_l^{-1} (\tau_i(x^{(l)}))$ approaches  $\tau_i(x^*)$ from the right
as $l$ approaches infinity. Then, the right continuity of $x^*$ implies that
\[
\left|x^*(\tau_i(x^*))-x^*(\lambda_l^{-1}(\tau_i(x^{(l)}))) \right| \to 0 \; \textrm{as} \; l \to \infty.
\]
On the other hand
\[
\left|x^*(\lambda_l^{-1}(\tau_i(x^{(l)})))- x^{(l)}(\tau_i(x^{(l)}))\right| \to 0 \; \textrm{as} \; l \to \infty
\]
as a consequence of the convergence of $x^{(l)}$ to $x^*$ in the Skorohod's metric.\\

\noindent As both terms in the right hand side of (\ref{triangleInequality}) converge to 0,
then the left hand side also converges to 0, so ii) is proven.\\

If trajectory $x^*$ falls in one of the cases 3c or 3d, the joint strong locally continuity property can be proved similarly. The main difference in the proof is that a sequence $\{x^{(n)}\}$ belonging to 
$U^{1,\epsilon}_{x^*}  \bigcap U^{6,\epsilon}_{x^*} \bigcap U^{7}_{x^*} \bigcap U^{8}_{x^*}$ and converging to $x^*$ will satisfy that 
$x^{(l)}(t)<x^*(t)$ if $t \in (\epsilon, T].$

\qed
\end{proof}

\begin{proposition} \label{positiveInOpenSet}
Let $f: \mathcal{X} \rightarrow \mathbb{R}$ be a locally continuous function.
Consider $x^* \in \mathcal{X}$ and an arbitrary open interval $I$ such that  $f(x^*) \in I$. Then, there exists an open set $V_{x^*} \subset \mathcal{X}$, with $x^{\ast} \in \overline{V}_{x^*}$,
such that $f(x) \in I $ for all $x \in V_{x^*}$.
\end{proposition}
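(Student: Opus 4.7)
The plan is to leverage the definition of local continuity directly. Since $f$ is locally continuous, there exists an open set $U_{x^*} \subset \mathcal{X}$ with $x^* \in \overline{U}_{x^*}$ such that $f(x_n) \to f(x^*)$ whenever $U_{x^*} \ni x_n \to x^*$. The natural candidate for $V_{x^*}$ is the intersection of $U_{x^*}$ with a sufficiently small open ball $B(x^*,\delta)$ centered at $x^*$ in the underlying metric on $\mathcal{X}$. The core step is to exhibit $\delta > 0$ small enough that $f(x) \in I$ for every $x \in U_{x^*} \cap B(x^*,\delta)$.

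I would argue by contradiction: if no such $\delta$ existed, then for each integer $n \geq 1$ I could pick $x_n \in U_{x^*} \cap B(x^*, 1/n)$ with $f(x_n) \notin I$. This sequence lies in $U_{x^*}$ and satisfies $x_n \to x^*$, so local continuity forces $f(x_n) \to f(x^*)$; but each $f(x_n)$ sits in the closed set $\mathbb{R} \setminus I$, so the limit would too, contradicting $f(x^*) \in I$. With such a $\delta$ fixed, I set $V_{x^*} := U_{x^*} \cap B(x^*,\delta)$, which is open as an intersection of two open sets and, by construction, satisfies $f(V_{x^*}) \subset I$.

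To verify the density condition $x^* \in \overline{V}_{x^*}$, I would use the hypothesis $x^* \in \overline{U}_{x^*}$ to choose a sequence $y_m \in U_{x^*}$ converging to $x^*$; eventually $y_m \in B(x^*,\delta)$, whence $y_m \in V_{x^*}$, yielding $x^* \in \overline{V}_{x^*}$. There is no substantive obstacle: the proposition is essentially a restatement of local continuity tailored to preimages of open intervals, relying only on $\mathbb{R} \setminus I$ being closed together with the sequential formulation of local continuity. The one point worth attention is ensuring that shrinking $U_{x^*}$ to $V_{x^*}$ preserves $x^* \in \overline{V}_{x^*}$, which is precisely why $B(x^*,\delta)$ is intersected with $U_{x^*}$ rather than used as a replacement.
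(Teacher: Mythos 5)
Your proof is correct and complete; the paper itself omits the argument, stating only that it is trivial, so there is nothing to compare against. Your contradiction argument via the closedness of $\mathbb{R}\setminus I$, together with intersecting $U_{x^*}$ with a small ball to preserve $x^*\in\overline{V}_{x^*}$, is exactly the natural way to fill in that omitted proof.
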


The proof of Proposition \ref{positiveInOpenSet} is trivial so we are not including it here.

\end{document}